\newtheorem{thm}{Theorem}[section]
\newtheorem{cor}[thm]{Corollary}
\newtheorem{lem}[thm]{Lemma}
\newtheorem{prop}[thm]{Proposition}
\theoremstyle{definition}
\newtheorem{defn}[thm]{Definition}
\theoremstyle{remark}
\newtheorem{rem}[thm]{Remark}
\numberwithin{equation}{section}
\newcommand{\RR}{\mathbb{R}}    % Real numbers
\newcommand{\ZZ}{\mathbb{Z}}    % Relative integers
\newcommand{\OO}{\mathrm{O}}    % Orthogonal group
\newcommand{\SO}{\mathrm{SO}}   % Special orthogonal groups
\newcommand{\octa}{\mathbb{O}}  % Cubic (octahedral) group
\newcommand{\DD}{\mathbb{D}}    % Dihedral group
\newcommand{\triv}{\mathds{1}}  % trivial group
\newcommand{\Ela}{\mathbb{E}\mathrm{la}} % Space of elasticity tensors
\newcommand{\HH}{\mathbb{H}}        % Space of harmonic tensors
\newcommand{\Sym}{\mathbb{S}}       % Space of totally symmetric tensors
\newcommand{\SV}{\mathsf{V}}        % vector space
\newcommand{\SX}{\mathsf{X}}        % vector space
\newcommand{\ee}{\bm{e}}            % vector in R^{3}
\newcommand{\nn}{\bm{n}}            % vector in R^{3}
\newcommand{\vv}{\bm{v}}            % vector in R^{3}
\newcommand{\rp}{\mathrm{p}}        % polynomial in 2 variables
\newcommand{\ba}{\mathbf{a}}
\newcommand{\bb}{\mathbf{b}}
\newcommand{\bc}{\mathbf{c}}
\newcommand{\bd}{\mathbf{d}}
\newcommand{\bk}{\mathbf{k}}
\newcommand{\bq}{\mathbf{q}}
\newcommand{\bt}{\mathbf{t}}
\newcommand{\bv}{\mathbf{v}}
\newcommand{\bz}{\mathbf{0}}
\newcommand{\bA}{\mathbf{A}}
\newcommand{\bB}{\mathbf{B}}
\newcommand{\bC}{\mathbf{C}}        % order 4 projector
\newcommand{\bE}{\mathbf{E}}        % elasticity tensor
\newcommand{\bH}{\mathbf{H}}        % harmonic tensor
\newcommand{\bS}{\mathbf{S}}        % totally symmetric tensor or fourth-order symmetric tensor
\newcommand{\bT}{\mathbf{T}}        % generic tensor
\newcommand{\lc}{\bm{\varepsilon}}  % Levicivita tensor
\DeclareMathOperator{\tr}{tr}
\DeclareMathOperator{\sign}{sgn}
\DeclareMathOperator{\Orb}{Orb}
\DeclareMathOperator{\2dots}{:}
\DeclareMathOperator{\3dots}{\raisebox{-0.25ex}{\vdots}}
\newcommand{\rot}{\mathbf{r}}
\newcommand{\strata}[1]{\Sigma_{[#1]}}	                % open stratum
\newcommand{\norm}[1]{\left\Vert#1\right\Vert}
\newcommand{\abs}[1]{\left\vert#1\right\vert}
\newcommand{\set}[1]{\left\{#1\right\}}
\begin{document}

\title{Minimal functional bases for elasticity tensor symmetry classes}%

\author{R. Desmorat}
\address{Université Paris-Saclay, ENS Paris-Saclay, CNRS,  LMT - Laboratoire de Mécanique et Technologie, 91190, Gif-sur-Yvette, France}
\email{rodrigue.desmorat@ens-paris-saclay.fr}
\author{N. Auffray}
\address{MSME, Université Paris-Est, Laboratoire Modélisation et Simulation Multi Echelle, MSME UMR 8208 CNRS, 5 bd Descartes, 77454 Marne-la-Vallée, France}
\email{Nicolas.auffray@univ-mlv.fr}

\author{B. Desmorat}
\address{Sorbonne Université, UMPC Univ Paris 06, CNRS, UMR 7190, Institut d'Alembert, F-75252 Paris Cedex 05, France \& Univ Paris Sud 11, F-91405 Orsay, France}
\email{boris.desmorat@upmc.fr}

\author{M. Olive}
\address{Université Paris-Saclay, ENS Paris-Saclay, CNRS,  LMT - Laboratoire de Mécanique et Technologie, 91190, Gif-sur-Yvette, France}
\email{marc.olive@math.cnrs.fr}

\author{B. Kolev}
\address{Université Paris-Saclay, ENS Paris-Saclay, CNRS,  LMT - Laboratoire de Mécanique et Technologie, 91190, Gif-sur-Yvette, France}
\email{boris.kolev@math.cnrs.fr}

\subjclass[2010]{74B05; 74E10 ; 15A72}
\keywords{Anisotropy; Covariants; Invariant theory; Symmetry classes}%
\date{January 19, 2022}%

\thanks{Three of the authors, R. Desmorat, B. Kolev and M. Olive, were partially supported by CNRS Projet 80--Prime GAMM (Géométrie algébrique complexe/réelle et mécanique des matériaux).}

% ----------------------------------------------------------------

\begin{abstract}
  Functional bases, synonymous with separating sets, are usually formulated for an entire vector space, such as the space $\Ela$ of elasticity tensors. We propose here to define functional bases limited to symmetry strata, \emph{i.e.}, sets of tensors of the same symmetry class. We provide such low-cardinality minimal bases for tetragonal, trigonal, cubic or transversely isotropic symmetry strata of the elasticity tensor.
\end{abstract}

\maketitle

% ----------------------------------------------------------------
\section{Introduction}
% ----------------------------------------------------------------

In the field of linear elasticity, the mechanical properties of an elastic material are represented by an elasticity tensor $\bE$, element of the vector space $\Ela$. This association is nevertheless not unique since two elasticity tensors, that differ only up to a rotation, describe the same elastic material \cite{FV1996}. It is important, for applications, to be able to distinguish within $\Ela$ which tensors represent the same materials from those who do not. The answer to this question is provided by the construction of a finite set $\mathscr{F}$ -- preferably minimal -- of $\SO(3)$-invariant functions (simply called invariant functions in the following), which
\begin{enumerate}
  \item enable one to check if two elasticity tensors describe the same elastic material, \emph{i.e.},  that they are related by a rotation;
  \item allow one to rewrite any invariant function $f$ of an elasticity tensor $\bE$ as a function of the elements of $\mathscr{F}$
        (\emph{i.e.}, rewrite $f(\bE)=F(\mathscr F)$ for some function $F$).
\end{enumerate}
This second point constitutes the core of the application of Invariant Theory to Continuum Mechanics~\cite{Smi1971,Ver1979,Boe1987,Via1997,VV2001}.

The knowledge of an \emph{integrity basis} provides an answer to this twofold question, but, generally, the cardinality of a minimal integrity basis can be very high. For instance, in the case of three-dimensional elasticity, a minimal integrity basis consists of 294 elements~\cite{OKA2017,OKDD2021}. This is mainly due to the fact that an integrity basis is a response to a different mathematical question, namely, \emph{the determination of a set of generators for the algebra of $\SO(3)$-invariant polynomial functions over $\Ela$}\footnote{Any invariant polynomial in the components $E_{ijkl}$ of $\bE$ can be written as a polynomial in the elements of the integrity basis of the elasticity tensors.}.

An invariant set which satisfies (1) is called a \emph{separating set}, while one which satisfies (2) is called a \emph{functional basis}~\cite{Wey1997}. Although they seem different at first glance, these two notions are in fact equivalent, as shown by  Wineman and Pipkin~\cite{WP1964}. This is interesting since the cardinality of a functional basis can be lower than the one of an integrity basis. But, in contrast to integrity bases and despite some attempts~\cite{DKW2008,Olive2017}, there is no general algorithm to obtain functional bases.

For isotropic elasticity, it is well-known that Lamé parameters $\lambda, \mu$ are two invariants that allow to separate isotropic elasticity tensors and to write invariant functions of an isotropic elasticity tensor $\bE$ (any invariant function $f(\bE)$ can be written as $f(\bE)=F(\lambda,\mu)$ for some function $F$). The extension of this simple observation to the whole vector space $\Ela$ is a difficult problem, as emphasized by Ming \emph{et al}~\cite{MCQ+19}. Indeed, these authors have obtained a polynomial functional basis of 251 elements, still a rather large number!
There are in the literature different strategies to reduce the number of elements of a functional basis. For instance,
\begin{itemize}
  \item \emph{change the class of its elements}: usually polynomial invariants are considered~\cite{Smi1971,Zhe1994,OA2014,OKDD2021,CLQZZ2018,LDQZ2018,MZC2019}, but this is not mandatory;
  \item \emph{look for local separating sets instead of global ones}: the separating property is then defined, not on the whole vector space, but only on a neighbourhood of a given tensor. In this direction, Bona et al.~\cite{BBS2008} proposed a local parametrization of orbits of generic triclinic elasticity tensors by 18 \emph{local algebraic invariants}. A separating set of 18 \emph{local polynomial invariants} was provided in~\cite[Theorem A.3]{DADKO2019};
  \item \emph{restrict the separating property to a subset of generic tensors} (generally triclinic). The corresponding functional bases are then called \emph{weak functional bases}~\cite{BKO1994}.
\end{itemize}
When combined, these strategies lead to a drastic reduction in the cardinality of a functional basis. For three-dimensional elasticity tensors, \emph{a weak separating set of 39 global polynomial invariants} has been provided in~\cite{BKO1994}, and \emph{a weak separating set of 18 global rational invariants} has been obtained in~\cite[Corollary 4.5]{DADKO2019}. Nevertheless, to reduce this set from 294 elements to only 18, a price has to be paid, some (in general non triclinic) elasticity tensors are \textit{a priori} excluded from the possibility to check them.

The approach followed here is complementary. Instead of considering the whole vector space $\Ela$, we are looking for sets of invariants which separate tensors of a given symmetry class, with no genericity restrictions. Our aim is then to produce optimal functional bases, on these lower-dimensional elasticity symmetry classes of $\Ela$. In this paper, we will achieve this task for trigonal, tetragonal, transverse isotropic, and cubic elasticity tensors. Our work strongly relies on the geometry of fourth-order harmonic tensors~\cite{AKP2014} and elasticity tensors~\cite{OKDD2021}.

% ----------------------------------------------------------------
\subsection*{Outline}
% ----------------------------------------------------------------

The eight symmetry classes of linear elasticity and the associated breaking symmetry diagram (due to~\cite{FV1996}) are recalled in~\autoref{sec:cov-charac-Ela}, where we summarize necessary and sufficient polynomial conditions (obtained in~\cite{OKDD2021}) for an elasticity tensor to belong to a given \emph{symmetry stratum} (\emph{i.e.}, a set of elasticity tensors of the same symmetry class).
In~\autoref{sec:FBTS}, we introduce the mathematical material necessary to define rigorously the notion of \emph{minimal functional bases}, not only on the whole elasticity tensor space $\Ela$ but also -- and this is the originality of the present work -- on each symmetry stratum. We illustrate this method, first in~\autoref{sec:S2}, by the construction of minimal functional bases for the orthotropic and the transversely isotropic strata of the space of \emph{second-order symmetric tensors}, and, then, in~\autoref{sec:H4}, by one for the orthotropic, the tetragonal, the trigonal and the transversely isotropic strata of the space of \emph{fourth-order harmonic tensors} (which appear in the harmonic decomposition of elasticity tensors). Thanks to the key-definition of a non vanishing second-order covariant, we obtain, in an intrinsic manner, our main result in~\autoref{sec:Ela} and~\autoref{sec:AllFivesClasses}, which is the explicit formulation of low-cardinality functional bases for elasticity tensors at least tetragonal or trigonal.

% ----------------------------------------------------------------
\subsection*{Tensorial operations}
\label{subsec:notations}
% ----------------------------------------------------------------

Using the Euclidean structure of $\RR^{3}$, no distinction will be made between covariant, contravariant or mixed tensors. All tensor components will be expressed with respect to an orthonormal basis $(\ee_{i})$. The space of $n$th-order tensors will be denoted by $\otimes^{n}(\RR^{3})$, and the subspace of totally symmetric tensors of order $n$ by $\Sym^{n}(\RR^{3})$. A traceless tensor $\bH \in \Sym^{n}(\RR^{3})$ is called an \emph{harmonic tensor} and the space of $n$th-order harmonic tensors is denoted by $\HH^{n}(\RR^{3})$.

The \emph{contraction} over two or three indices between second/fourth-order tensors will be denoted by
\begin{equation*}
  \begin{array}{ll}
    \ba\2dots\bb = a_{ij}b_{ij},                  & (\bA\2dots\ba)_{ij} = A_{ijkl}a_{kl} ,    \\
    (\bA \2dots \bB)_{ijkl} = A_{ijpq}B_{pqkl}  , & (\bA \3dots \bB)_{ij} = A_{ipqr}B_{pqrj}.
  \end{array}
\end{equation*}
The \emph{total symmetrization} of an $n$th-order tensor $\bT$ is the tensor $\bT^{s}$, defined by
\begin{equation*}
  (\bT^{s})_{i_{1}\dotsc i_{n}} =  \frac{1}{n!}\sum_{\sigma \in \mathfrak{S}_{n}} T_{i_{\sigma(1)} \dotsc i_{\sigma(n)}}
  \in \Sym^{n}(\RR^{3}),
\end{equation*}
where $\mathfrak{S}_{n}$ is the permutation group over $n$ elements.

The \emph{symmetric tensor product}, noted $\odot$, and the \emph{generalized cross product} (introduced in~\cite{OKDD2018}), noted $\times$, between two totally symmetric tensors $\bS_{1}\in \Sym^{n_{1}}(\RR^{3})$ and $\bS_{2}\in \Sym^{n_{2}}(\RR^{3})$, are defined respectively by
\begin{align}\label{eq:symmetric-tensor-product}
   & \bS_{1} \odot \bS_{2} := ( \bS_{1} \otimes \bS_{2})^{s}
  \; \in \Sym^{n_{1}+n_{2}}(\RR^{3}),
  \\
  \label{eq:cross-product}
   & \bS_{1} \times \bS_{2} := (\bS_{2} \cdot \lc \cdot \bS_{1})^{s}\in \Sym^{n_{1}+n_{2}-1}(\RR^{3}),
\end{align}
where $\lc$ is the third-order Levi-Civita tensor (with components $\varepsilon_{ijk}=\det(\ee_{i}, \ee_j, \ee_{k})$). Explicit component formulas for the generalized cross product involving second and fourth-order tensors can be found in~\cite{ADDKO2019}. We have moreover~\cite{OKDD2018}
\begin{equation}\label{eq:Sxq}
  \bS \times \bq=0, \quad \forall \bS\in \Sym^{n}(\RR^{3}),
\end{equation}
where $\bq=(\delta_{ij})$ is the Euclidean metric.

\begin{figure}
  \centering
  \includegraphics[scale=1]{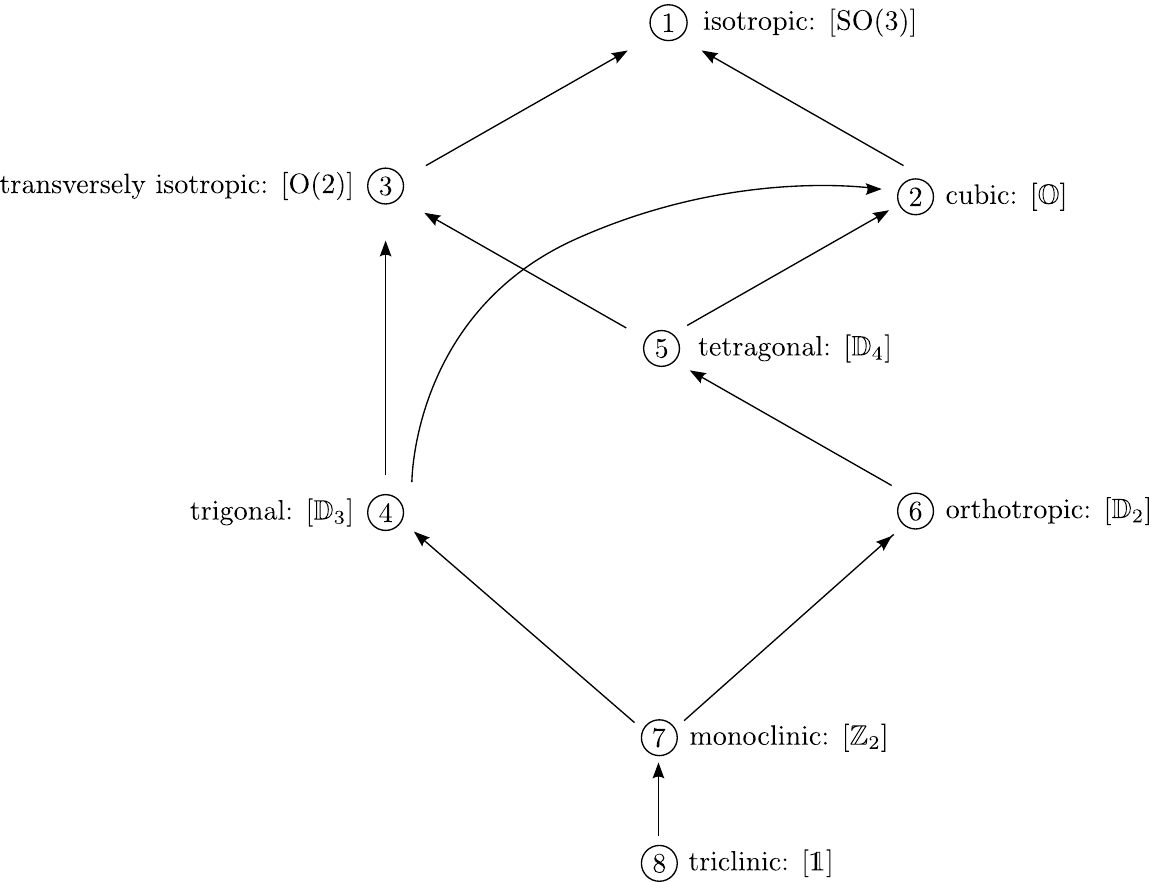}
  \caption{Symmetry classes of elasticity tensors and of fourth-order harmonic tensors~\cite{IG1984,FV1996} (figure from~\cite{AKP2014}).}
  \label{fig:bifurcationEla}
\end{figure}

% ----------------------------------------------------------------
\section{Covariant characterization of elasticity symmetry classes}
\label{sec:cov-charac-Ela}
% ----------------------------------------------------------------

Let
\begin{equation*}
  \Ela := \set{\bE\in\otimes^{4}(\RR^{3});\; E_{ijkl}=E_{klij}=E_{jikl}}
\end{equation*}
be the 21-dimensional vector space of three-dimensional elasticity tensors. It is endowed with the natural $\SO(3)$ representation given by
\begin{equation}\label{eq:gE}
  (g\star \bE)_{ijkl} := g_{ip}g_{jq}g_{kr}g_{ls} E_{ijkl}, \quad  g\in \SO(3).
\end{equation}

\subsection{Elasticity symmetry classes and strata}

Forte and Vianello~\cite{FV1996} have shown that there are exactly eight different elasticity symmetry classes, depicted in~\autoref{fig:bifurcationEla}, and in which the mechanical names are provided aside the associate group designation $[H]$:  triclinic $[\triv]$, monoclinic $[\ZZ_{2}]$, orthotropic $[\DD_{2}]$, tetragonal $[\DD_{4}]$, trigonal $[\DD_{3}]$, transversely-isotropic $[\OO(2)]$, cubic $[\octa]$ and isotropic $[\SO(3)]$ (see~\autoref{sec:Ela-symmetry-groups} for the group notations).

Given a symmetry class $[H]$, the symmetry stratum $\strata{H}$ is the set of all the elasticity tensors which have exactly the symmetry class $[H]$. Observe, for instance, that a transversely isotropic elasticity tensor $\bE$ has also tetragonal symmetry. In such a case, we will say that $\bE$ is \emph{at least tetragonal}, but it does not belong to the tetragonal stratum $\strata{\DD_{4}}$. This ``at least'' order relation is depicted by the arrows of~\autoref{fig:bifurcationEla}.

\subsection{Harmonic decomposition -- Covariants}

The first step, when studying the geometry of elasticity tensors, consists in splitting $\Ela$ into stable, irreducible vector spaces (under the action of $\SO(3)$). This is the so-called \emph{harmonic decomposition}~\cite{Bac1970}. Introducing the second-order \emph{dilatation tensor}
\begin{equation*}
  \bd := \tr_{12}\bE, \qquad d_{ij} = E_{kkij},
\end{equation*}
and the second-order \emph{Voigt tensor}
\begin{equation*}
  \bv := \tr_{13}\bE, \qquad v_{ij}=E_{kikj}
\end{equation*}
one obtains an explicit harmonic decomposition of $\bE$ (see~\cite{Cow1989a,Cow1989a,Bae1993,FV1996,ADDKO2019}),
\begin{equation}\label{eq:dec-harm-E}
  \bE = (\tr \bd, \tr \bv, \bd^{\prime}, \bv^{\prime}, \bH).
\end{equation}
In this decomposition, the harmonic components are the two scalar invariants
\begin{equation}\label{eq:dec-harm-lambdamu}
  \tr \bd,  \qquad  \tr \bv,
\end{equation}
the two deviatoric tensors
\begin{equation}\label{eq:dec-harm-dv}
  \bd^{\prime} = \bd - \frac{1}{3} (\tr \bd) \, \bq, \qquad \bv^{\prime} = \bv - \frac{1}{3} (\tr \bv) \, \bq,
\end{equation}
and the harmonic (\emph{i.e.}, totally symmetric and traceless) fourth-order tensor
\begin{equation}\label{eq:H4E}
  \bH = \bE^{s} - \bq \odot \ba^{\prime} - \frac{7}{30} (\tr \ba)\, \bq \odot \bq, \quad \ba := \frac{2}{7}(\bd+2\bv),
\end{equation}
where $\bE^s$ is the totally symmetric part of $\bE$, and $\odot$ is the symmetrized tensor product defined in~\eqref{eq:symmetric-tensor-product}. The harmonic decomposition \eqref{eq:dec-harm-E} is equivariant, meaning that it satisfies:
\begin{equation*}
  g \star \bE = (g\star \tr \bd , g \star \tr \bv, g  \star \bd^{\prime}, g \star \bv^{\prime}, g \star \bH)=(\tr \bd, \tr \bv, g  \star \bd^{\prime}, g \star \bv^{\prime}, g \star \bH),
\end{equation*}
for any rotation $g\in \SO(3)$. Note here that $g\star \lambda =\lambda$ for scalar invariants $\lambda$. The action of a rotation on a second-order tensor $\ba$ is $g\star \ba =g \ba g^{t}$, while the action of a rotation on a fourth-order tensor is given by~\eqref{eq:gE}. The harmonic components
\begin{equation*}
  \tr \bd=\tr (\bd(\bE)), \quad \tr \bv=\tr (\bv(\bE)), \quad \bd^{\prime}=\bd^{\prime}(\bE), \quad \bv^{\prime}= \bv^{\prime}(\bE), \quad \bH=\bH(\bE),
\end{equation*}
are \emph{covariants $\bC(\bE)$} of $\bE$~\cite{KP2000,OKDD2021} (of respective order 0, 0, 2, 2 and 4, $\tr \bd$ and $\tr \bv$ being scalar invariants of $\bE$, and $\bd^{\prime}(\bE)$, $\bv^{\prime}(\bE)$ and $\bH=\bH(\bE)$ being linear covariants of $\bE$). They satisfy the rule
\begin{equation*}
  \bC(g\star \bE) = g\star \bC(\bE), \qquad \forall g\in \SO(3).
\end{equation*}
However, there also exists \emph{polynomial covariants} of higher degree. For instance, the quadratic covariant
\begin{equation}\label{eq:d2}
  \bd_{2}(\bH):=\bH\3dots \bH , \qquad (\emph{i.e.}, \; (\bd_{2})_{ij}=H_{ipqr}H_{pqrj}),
\end{equation}
introduced by Boehler, Kirillov and Onat in 1994~\cite{BKO1994}, and which plays a fundamental role in the classification (by symmetry classes) of the fourth-order harmonic tensor and of the elasticity tensor. Indeed, necessary and sufficient conditions for an elasticity tensor to be of a given symmetry class have been formulated in~\cite{OKDD2021}, involving $\bd$, $\bv$, $\bd_{2}$ and other higher degree polynomial covariants.

\subsection{Covariant characterization of elasticity symmetry classes}

The following theorem was proved in~\cite[Theorem 10.2]{OKDD2021}. It provides a characterization of the isotropic, cubic, transversely isotropic, tetragonal and trigonal symmetry classes of elasticity (that is for elasticity tensors which are at least trigonal or tetragonal). We denote by $\ba'=\ba-\frac{1}{3} (\tr \ba) \; \bq$, the deviatoric part of a symmetric second-order tensor $\ba$ and recall that $\bH \times \bq=0$, so that $\bH \times \ba=\bH \times \ba'$.

\begin{thm}\label{thm:main}
  Let $\bE =(\tr \bd, \tr \bv, \bd^{\prime}, \bv^{\prime}, \bH) \in \Ela$ be an elasticity tensor. Then
  \begin{enumerate}
    \item $\bE$ is isotropic if and only if $\bd^{\prime} = \bv^{\prime} = \bd_{2} = 0$.

    \item $\bE$ is cubic if and only if $\bd^{\prime} = \bv^{\prime} = \bd_{2}^{\prime} = 0$ and $\bd_{2} \ne 0$.

    \item $\bE$ is transversely isotropic if and only if $(\bd_{2}, \bd, \bv)$ is transversely isotropic and
          \begin{equation*}
            \bH \times \bd_{2} = \bH \times \bd = \bH \times \bv = 0.
          \end{equation*}

    \item $\bE$ is tetragonal if and only if $(\bd_{2}, \bd, \bv)$ is transversely isotropic,
          \begin{equation*}
            \tr(\bH \times \bd_{2}) = \tr(\bH \times \bd) = \tr(\bH \times \bv) = 0,
          \end{equation*}
          and
          \begin{equation*}
            \bH \times \bd_{2} \ne 0, \quad \text{or} \quad \bH \times \bd \ne 0, \quad \text{or} \quad \bH \times \bv \ne 0.
          \end{equation*}

    \item $\bE$ is trigonal if and only if $(\bd_{2}, \bd, \bv)$ is transversely isotropic,
          \begin{equation*}
            \bd_{2} \times (\bH\2dots\bd_{2}) = \bd \times (\bH\2dots\bd) = \bv \times (\bH\2dots\bv) = 0,
          \end{equation*}
          and
          \begin{equation*}
            \tr( \bH \times \bd_{2}) \ne 0, \quad \text{or} \quad \tr(\bH \times \bd) \ne 0, \quad \text{or} \quad \tr(\bH \times \bv) \ne 0.
          \end{equation*}

  \end{enumerate}
\end{thm}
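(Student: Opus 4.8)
The plan is to read the symmetry class of $\bE$ off its harmonic components via the equivariance of the decomposition \eqref{eq:dec-harm-E}. Since this decomposition is an $\SO(3)$-equivariant isomorphism, the symmetry group $G_{\bE} := \set{g \in \SO(3);\; g \star \bE = \bE}$ equals the intersection $G_{\bd'} \cap G_{\bv'} \cap G_{\bH}$, the scalars $\tr\bd, \tr\bv$ imposing no constraint; and since $\bd_{2} = \bH \3dots \bH$ is a covariant of $\bH$, we also have $G_{\bH} \subseteq G_{\bd_{2}}$. For each class $[H]$ I would split the asserted equivalence into two halves: the stated equalities should express that $\bE$ is \emph{at least} $[H]$ (that $G_{\bE}$ contains a conjugate of $H$), while the stated inequalities should exclude every stratum lying strictly above $[H]$ in the poset of \autoref{fig:bifurcationEla}, thereby isolating the stratum $\strata{H}$ itself.

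The isotropic and cubic cases then reduce to the elementary classification of the components. A nonzero deviatoric second-order tensor is transversely isotropic or orthotropic, and a fourth-order harmonic tensor vanishes precisely when $\bd_{2} = \bH \3dots \bH = 0$, because $\tr \bd_{2} = \norm{\bH}^{2}$. Hence $\bE$ is isotropic iff $\bd' = \bv' = 0$ and $\bH = 0$, which is (1). For (2) I would invoke the geometry of $\HH^{4}(\RR^{3})$ \cite{AKP2014}, by which a fourth-order harmonic tensor is cubic iff its quadratic covariant $\bd_{2}$ is spherical and nonzero, i.e. $\bd_{2}' = 0$ and $\bd_{2} \ne 0$; combined with $\bd' = \bv' = 0$ this gives (2), the condition $\bd_{2} \ne 0$ ruling out the only larger class, isotropy.

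For the transversely isotropic, tetragonal and trigonal strata the engine is an $\SO(2)$ Fourier analysis about the common axis. The hypothesis that $(\bd_{2}, \bd, \bv)$ be transversely isotropic fixes an axis $\nn$ (at least one of the three being genuinely so) and, just as importantly, forbids mode mixing, so that the symmetry class of $\bE$ is governed by that of $\bH$ relative to this fixed axis. Taking $\nn = \ee_{3}$ and expanding $\bH \in \HH^{4}$ into its azimuthal Fourier modes $k = 0, \dots, 4$, the tensor is transversely isotropic, tetragonal or trigonal according to whether its nonzero modes are $\set{0}$, $\set{0,4}$ or $\set{0,3}$. I would then match each covariant (evaluated on a generic $\bt \in \set{\bd_{2}, \bd, \bv}$) to the modes it can detect, the governing constraint being tensorial order, since an order-$n$ symmetric tensor carries only modes $\abs{k} \le n$. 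Thus $\bH \times \bt$ encodes the infinitesimal rotation of $\bH$ about $\nn$ and detects every mode $k = 1,2,3,4$, so its vanishing leaves only $k = 0$ and gives transverse isotropy, case (3); its trace $\tr(\bH \times \bt)$ is of order $3$, hence blind to $k = 4$ but still sensitive to $k = 1,2,3$, so $\tr(\bH \times \bt) = 0$ together with $\bH \times \bt \ne 0$ forces the modes $\set{0,4}$ and gives tetragonal symmetry, case (4); finally $\bH \2dots \bt$ is of order $2$ and erases the modes $k = 3,4$, so $\bt \times (\bH \2dots \bt) = 0$ only forces $k = 1,2$ to vanish, and combined with $\tr(\bH \times \bt) \ne 0$ (which sees the $k = 3$ content) and the transverse isotropy of $\bd_{2}$ (which excludes $k = 3$ and $k = 4$ occurring together) this pins down the modes $\set{0,3}$ and gives trigonal symmetry, case (5). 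Running the three covariants $\bd_{2}, \bd, \bv$ with \emph{and} on the equalities and \emph{or} on the inequalities is what makes the criterion immune to any single covariant being accidentally too symmetric (for instance a spherical $\bd'$, for which $\bH \times \bd = 0$ carries no information).

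The main obstacle is exactly the verification of this dictionary: that each covariant detects \emph{all} of its claimed modes and \emph{only} those, with no accidental cancellation, and that the inequalities really separate neighbouring strata without spurious solutions. I would discharge it by evaluating every covariant on the $\OO(2)$-adapted normal form of $\bH$ with axis $\ee_{3}$, mode by mode, checking that $\tr(\bH \times \bt)$ genuinely isolates the $k = 3$ content that distinguishes trigonal from tetragonal and that $\bH \times \bt$ captures the $k = 4$ content that distinguishes tetragonal from transverse isotropy (the residual reflections upgrading the cyclic invariance to the dihedral groups $\DD_{4}$, $\DD_{3}$ automatically, since a single real Fourier mode can always be phase-rotated to a reflection-symmetric form). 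Confirming that the transverse isotropy of $(\bd_{2}, \bd, \bv)$ is enough to discard the mixed-mode tensors, which would otherwise satisfy the equalities while carrying strictly lower symmetry, is the most delicate point, and is where the choice of three covariants and the precise form of the trigonal equality $\bt \times (\bH \2dots \bt) = 0$ earn their keep.
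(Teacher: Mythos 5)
There is nothing in this paper to compare your argument against: Theorem~\ref{thm:main} is not proved here but imported from \cite[Theorem 10.2]{OKDD2021}, the only related internal material being the $\HH^{4}(\RR^{3})$ characterizations quoted, also without proof, as Theorem~\ref{thm:H-stratification-criteria} in the appendix. Judged against that source, your route is essentially the same one: the reduction $G_{\bE}=G_{\bd^{\prime}}\cap G_{\bv^{\prime}}\cap G_{\bH}=G_{(\bd,\bv,\bd_{2})}\cap G_{\bH}$ (using $G_{\bH}\subseteq G_{\bd_{2}}$), the fixed axis supplied by the transversely isotropic triplet, and the azimuthal-mode dictionary verified on $\OO(2)$-adapted normal forms is exactly the mechanism behind the cited characterizations. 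You also get the delicate points right: the ``and'' on equalities versus ``or'' on inequalities (a spherical member $\bt$ of the triplet gives $\bH\times\bt=0$ vacuously, since $\bH\times\bq=0$); the fact that your intersection $\OO(2)\cap G_{\bH}$ yields a tetragonal (resp.\ trigonal) $\bE$ even when $\bH$ itself is \emph{cubic}, merely viewed along a four-fold (resp.\ three-fold) axis --- a case worth stating explicitly, since mode content $\set{0,4}$ about the axis does not make $\bH$ tetragonal, only the pair $(\bH,\bt)$; and the necessity half, namely that the triplet of a transversely isotropic, tetragonal or trigonal $\bE$ is \emph{exactly} transversely isotropic, which follows from your cases (1)--(2) because an isotropic triplet would force $\bE$ to be at least cubic, while $\OO(2)$ is maximal among proper closed subgroups of $\SO(3)$.

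The one real caveat is that your proposal defers precisely the step that \emph{is} the theorem: the injectivity (absence of accidental cancellation) of the mode maps. Every equivalence in (3)--(5) rests on showing that, with $\bt_{0}=(\ee_{3}\otimes\ee_{3})^{\prime}$, the maps sending the mode-$k$ component of $\bH$ to the mode-$k$ component of $\bH\times\bt_{0}$ (for $k=1,\dotsc,4$), of $\tr(\bH\times\bt_{0})$ (for $k=1,2,3$) and of $\bt_{0}\times(\bH\2dots\bt_{0})$ (for $k=1,2$) have no kernel, and that the mode-$3$/mode-$4$ interaction contributes a genuinely nonzero azimuthal-order-one part to $\bd_{2}=\bH\3dots\bH$, so that transverse isotropy of $\bd_{2}$ forbids $k=3$ and $k=4$ from coexisting. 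By $\SO(2)$-equivariance each such check collapses to the non-vanishing of a single constant per mode (for instance, in complex mode coordinates the order-one part of $\bd_{2}$ is a fixed multiple of $\overline{h}_{3}h_{4}$), so the verification is finite and routine; but until it is carried out --- or until you simply invoke Theorem~\ref{thm:H-stratification-criteria}, after which (3)--(5) follow quickly from your intersection argument --- what you have is a faithful blueprint of the proof given in \cite{OKDD2021}, not yet a self-contained proof.
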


As a corollary of this theorem, we have the following result.

\begin{cor}\label{cor:transversely-isotropic}
  Let $\bE$ be an elasticity tensor which is either transversely isotropic, tetragonal or trigonal. Then,
  $(\bd,\bv,\bd_{2})$ is transversely isotropic (or equivalently $(\bd^{\prime},\bv^{\prime},\bd_{2}^{\prime})$ is transversely isotropic). In particular, there exists a unit vector $\nn$, defining the axis $\langle \nn \rangle$ of transverse isotropy, and such that
  \begin{equation*}
    \bd^{\prime}=\alpha (\nn\otimes \nn)',\qquad \bv^{\prime}=\beta (\nn\otimes \nn)',\qquad
    \bd_{2}^{\prime}=\gamma (\nn\otimes \nn)',
  \end{equation*}
  where $(\alpha,\beta, \gamma)\ne (0,0,0)$.
\end{cor}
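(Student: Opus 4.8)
The plan is to extract the first assertion directly from \autoref{thm:main} and then to exclude the degenerate configuration by elimination. In each of the three cases covered by the hypothesis --- $\bE$ transversely isotropic, tetragonal, or trigonal --- the corresponding item (3), (4) or (5) of \autoref{thm:main} lists among its conditions precisely that the triple $(\bd_{2}, \bd, \bv)$ be transversely isotropic, which is the first claim (the order of the three tensors being irrelevant). For the parenthetical equivalence with $(\bd',\bv',\bd_{2}')$, I would note that a symmetric second-order tensor and its deviatoric part differ only by a spherical term proportional to $\bq$, which is isotropic and therefore does not alter the symmetry group; hence $(\bd,\bv,\bd_{2})$ admits a common axis of transverse isotropy if and only if $(\bd',\bv',\bd_{2}')$ does.

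Next I would invoke the elementary characterization of transverse isotropy for a symmetric second-order tensor: $\ba$ is invariant under all rotations about an axis $\langle \nn \rangle$ exactly when $\nn$ is an eigenvector whose orthogonal complement is a degenerate eigenspace, equivalently when $\ba' = \lambda\,(\nn\otimes\nn)'$ for some scalar $\lambda$ (a direct computation gives $\ba'=a_{2}(\nn\otimes\nn)'$ when $\ba=a_{1}\bq+a_{2}\,\nn\otimes\nn$). Applying this to each member of the triple, with the \emph{common} axis $\nn$ furnished by the transverse isotropy of $(\bd,\bv,\bd_{2})$, yields at once the stated expressions $\bd'=\alpha(\nn\otimes\nn)'$, $\bv'=\beta(\nn\otimes\nn)'$ and $\bd_{2}'=\gamma(\nn\otimes\nn)'$ with a single unit vector $\nn$.

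The one substantive point is the non-vanishing $(\alpha,\beta,\gamma)\ne(0,0,0)$, which I would prove by contradiction. If $\alpha=\beta=\gamma=0$ then $\bd'=\bv'=\bd_{2}'=0$, and items (1) and (2) of \autoref{thm:main} then force $\bE$ to be isotropic (when $\bd_{2}=0$) or cubic (when $\bd_{2}\ne 0$); either conclusion contradicts the assumption that $\bE$ lies in the transversely isotropic, tetragonal or trigonal stratum. I do not anticipate any genuine difficulty here, the corollary being a direct logical consequence of the classification in \autoref{thm:main}; the only care required is to keep the distinction between the vanishing of a tensor and the vanishing of its deviatoric part, since it is exactly this distinction ($\bd_{2}\ne 0$ while $\bd_{2}'=0$) that separates the cubic case from the isotropic one in the argument above.
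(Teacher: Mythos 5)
Your proposal is correct and takes essentially the same route as the paper, which offers no separate proof and presents the corollary as a direct consequence of theorem~\ref{thm:main}: items (3)--(5) supply the transverse isotropy of the triplet $(\bd_{2},\bd,\bv)$, and the degenerate case $\bd^{\prime}=\bv^{\prime}=\bd_{2}^{\prime}=0$ is excluded because items (1)--(2) would then place $\bE$ in the isotropic or cubic stratum. The details you add --- the spectral characterization $\ba^{\prime}=\lambda(\nn\otimes\nn)^{\prime}$ of transverse isotropy with a common axis, and the contradiction argument for $(\alpha,\beta,\gamma)\ne(0,0,0)$ --- are precisely what the paper leaves implicit.
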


% ----------------------------------------------------------------
\section{Functional bases and separating sets}
\label{sec:FBTS}
% ----------------------------------------------------------------

In this section, we recall basic notions in Invariant Theory, in particular: \emph{functional basis}, \emph{separating set} and \emph{integrity basis}, and the associated notion of \emph{minimality}. The concepts of functional basis and separating set are meaningful in a very general setting, namely for the action of a group $G$ on a set $\SX$~\cite{Wey1997}, and are moreover equivalent, as noted by Wineman and Pipkin~\cite{WP1964}. Defining a finite integrity basis requires some additional structure, for instance that $G$ is a compact Lie group~\cite{Bredon1993} (with the remark that in solid mechanics, many relevant groups are  compact), $\SX= \SV$ is a vector space, and the action of $G$ on $\SV$ is linear.

% ----------------------------------------------------------------
\subsection{Action of a group on a set}
\label{subsec:Action_group}
% ----------------------------------------------------------------

An action $\star$ of a group $G$ on a set $\SX$ is a mapping
\begin{equation*}
  G \times \SX \to  \SX,
  \qquad
  (g, x) \mapsto g \star x,
\end{equation*}
such that
\begin{equation*}
  (g_{1} g_{2}) \star x = g_{1} \star (g_{2} \star x),
  \qquad
  e \star x =x,
\end{equation*}
where $g_{1}, g_{2} \in G$ and $e$ is the unit element of $G$. When $\SX=\SV$ is a vector space and the action is linear in $x$, such an action is called a \emph{linear representation} of $G$ on $\SX$. The \emph{symmetry group} of $x$ (also known as the isotropy group of $x$)  is defined as $G_{x}:=\set{g\in G,\, g\star x=x}$
and the \emph{symmetry class} of $x$, noted $[G_x]$, is defined as the conjugacy class of $G_x$ in $G$, \textit{i.e.}
\begin{equation*}
  [G_x]:=\set{gG_{x}g^{-1},\, g\in G}.
\end{equation*}
A \emph{symmetry stratum} $\Sigma_{[H]}$ is the set of all elements $x$ with symmetry group $G_{x}$ conjugate to $H$:
\begin{equation*}
  \Sigma_{[H]}:=\set{x\in X,\quad G_x\in [H]}.
\end{equation*}
The orbit of the point $x\in \SX$ is defined as the set
\begin{equation*}
  \Orb(x):=\set{g\star x,\quad g\in G}.
\end{equation*}
Observe that all points in $\Orb(x)$ belong to the same symmetry stratum, since
$G_{g\star x}=gG_{x}g^{-1}$.
Finally, the \emph{orbit space} $\SX/G$ is the set of orbits and the canonical projection is the mapping
\begin{equation}\label{eq:Quotient_Map}
  \pi \: : \: \SX \longrightarrow \SX/G,
  \qquad
  x \mapsto \Orb(x).
\end{equation}

% ----------------------------------------------------------------
\subsection{Functional bases and separating sets}
% ----------------------------------------------------------------

The action of $G$ on $\SX$ induces a linear action of $G$ on the \emph{vector space} $\mathcal{F}(\SX)$ of real-valued functions on $\SX$, which is written
\begin{equation*}
  (g\star f)(x):=f(g^{-1}\star x),
\end{equation*}
where $f\in \mathcal{F}(\SX)$ and $g\in G$. The algebra $\mathcal{F}(\SX)^{G}$ of $G$-invariant functions on $\SX$ is defined by
\begin{equation}\label{eq:FXG}
  \mathcal{F}(\SX)^{G}:=\set{f\in \mathcal{F}(\SX),\quad g\star f=f,\quad \forall g\in G},
\end{equation}
and this definition leads to the notion of functional basis for $G$-invariant functions on $\SX$. This notion, introduced in Weyl's classical book~\cite{Wey1997}, has become a key notion in the mechanical science literature related to Invariant Theory~\cite{WP1964,Smi1971,Boe1987,Zhe1994}.

\begin{defn}[Functional basis]\label{def:Func_Basis}
  A finite set $\mathscr{F}:=\{\varphi_{1},\dotsc,\varphi_{s}\}$ of $G$-invariant functions is a functional basis of $\mathcal{F}(\SX)^{G}$ if for any $G$-invariant function $f\in \mathcal{F}(\SX)^{G}$ there exists a function $F:\RR^{s} \to \RR$ such that
  \begin{equation*}
    f(x)=F(\varphi_{1}(x),\dotsc,\varphi_{s}(x)),\quad \forall x\in \SX.
  \end{equation*}
  A functional basis $\mathscr{F}$ is said to be \emph{minimal} if no proper subset $\mathscr{F}'$ of $\mathscr{F}$ is a functional basis.
\end{defn}

As pointed out by Weyl~\cite[Page 30]{Wey1997}, the word function has to be understood in its widest scope. Such a function $F$ may not even be continuous~\cite[Section 5]{Smi1971}.

\begin{defn}[Separating set]
  A finite set $\mathscr{S}:=\set{\kappa_{1},\dotsc,\kappa_{r}}$ of $G$-invariant functions is a \emph{separating set} of $\SX/G$ if for any $x,\overline{x}$ in $\SX$
  \begin{equation*}
    \Orb(x)=\Orb(\overline{x}) \iff \kappa_{i}(x)=\kappa_{i}(\overline{x}),\quad i=1,\dotsc,r.
  \end{equation*}
  A separating set $\mathscr{S}$ is said to be \emph{minimal} if no proper subset $\mathscr{S}'$ of $\mathscr{S}$ is a separating set.
\end{defn}

Given a separating set $\set{\kappa_{1},\dotsc,\kappa_{r}}$ of invariant functions, the mapping
\begin{equation}\label{eq:Map_Eval_Inv}
  K\: : \: \SX\longrightarrow \RR^{r},\quad x\mapsto (\kappa_{1}(x),\dotsc,\kappa_{r}(x)).
\end{equation}
induces an injective mapping from the orbit space $\SX/G$ into $\RR^{r}$ and one has the following result~\cite{WP1964} (see also~\cite{PR1959,PW1963}).

\begin{thm}[Wineman and Pipkin]\label{thm:sepimpliesfunc}
  Consider a group $G$ acting on a set $\SX$. Then, each separating set $\set{\kappa_{1},\dotsc,\kappa_{r}}$ of $\SX/G$ is a functional basis of $\mathcal{F}(\SX)^{G}$: for each $G$-invariant function $f$, there exists a function
  \begin{equation*}
    F\: : \: \text{\emph{Im}}(K)\longrightarrow \RR,\quad \text{\emph{Im}}(K):=\set{K(x);\; x\in \SX},
  \end{equation*}
  such that
  \begin{equation*}
    f(x)=F(\kappa_{1}(x),\cdots,\kappa_{r}(x)),\quad \forall x\in \SX.
  \end{equation*}
  Conversely, each functional basis $\mathscr{F}=\set{\varphi_{1},\dots,\varphi_{s}}$ of $\mathcal{F}(\SX)^{G}$ is also a separating set of $\SX/G$.
\end{thm}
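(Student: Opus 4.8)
The plan is to exploit the canonical factorization of invariant functions through the orbit space, together with the injectivity on orbits forced by the separating property. For the direct implication, I would start from the map $K$ of~\eqref{eq:Map_Eval_Inv}. Because the $\kappa_{i}$ are $G$-invariant, $K$ is constant on orbits; the separating property adds that it takes distinct values on distinct orbits. Hence $K$ descends to an \emph{injective} map $\overline{K} \colon \SX/G \to \RR^{r}$ with $\overline{K}\circ \pi = K$ and $\mathrm{Im}(\overline{K}) = \mathrm{Im}(K)$. Given any $f \in \mathcal{F}(\SX)^{G}$, its $G$-invariance says exactly that $f$ is constant on orbits, so it too factors as $f = \overline{f}\circ \pi$ for a unique $\overline{f}\colon \SX/G \to \RR$.

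Then I would define $F \colon \mathrm{Im}(K) \to \RR$ by $F := \overline{f}\circ \overline{K}^{-1}$, where $\overline{K}^{-1}$ denotes the inverse of $\overline{K}$ on its image, well defined by injectivity. Since $K(x) = \overline{K}(\Orb(x))$ and $\overline{K}^{-1}\circ\overline{K} = \mathrm{id}_{\SX/G}$, evaluating gives $F(K(x)) = \overline{f}(\Orb(x)) = f(x)$ for every $x \in \SX$, which is the required identity $f(x) = F(\kappa_{1}(x),\dotsc,\kappa_{r}(x))$. I would stress that no regularity of $F$ is claimed nor needed: it is merely a set-theoretic function on $\mathrm{Im}(K)$, in keeping with the widest-scope meaning of ``function''.

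For the converse, I would argue by contraposition, using the fact that a functional basis lets one rewrite \emph{every} invariant function. Suppose $x, \overline{x} \in \SX$ satisfy $\varphi_{i}(x) = \varphi_{i}(\overline{x})$ for all $i$ but $\Orb(x) \ne \Orb(\overline{x})$. Consider the characteristic function $\chi_{\Orb(x)}$ of the orbit of $x$; since $g \star \Orb(x) = \Orb(x)$ for all $g \in G$, this function is $G$-invariant. By the functional-basis hypothesis there is $F$ with $\chi_{\Orb(x)}(y) = F(\varphi_{1}(y),\dotsc,\varphi_{s}(y))$ for all $y$. Evaluating at $x$ and $\overline{x}$ and using $\varphi_{i}(x)=\varphi_{i}(\overline{x})$ gives $1 = \chi_{\Orb(x)}(x) = \chi_{\Orb(x)}(\overline{x}) = 0$, a contradiction. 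Hence the values of the $\varphi_{i}$ separate orbits. The reverse direction of the separating equivalence is automatic: if $\Orb(x) = \Orb(\overline{x})$ then $\overline{x} = g\star x$ and invariance gives $\varphi_{i}(\overline{x}) = \varphi_{i}(x)$.

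The routine parts (that $K$ is orbit-constant, that invariance is equivalent to being orbit-constant) are immediate; the one place deserving attention is the converse, where the whole argument hinges on admitting the orbit indicator $\chi_{\Orb(x)}$ as a legitimate invariant function. This is exactly where the generality of the notion of function is essential, and I would flag that this is why the theorem holds in the purely set-theoretic setting, with no topological or polynomial assumptions on $G$ or $\SX$.
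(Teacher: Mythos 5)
Your proof is correct and complete. Note that the paper itself does not prove this theorem at all --- it defers to the original reference \cite{WP1964} (see also \cite{PR1959,PW1963}) --- but your direct implication is precisely the argument the paper alludes to in the sentence preceding the statement, namely that $K$ induces an \emph{injective} mapping $\overline{K}$ from the orbit space $\SX/G$ into $\RR^{r}$: combining this with the factorization $f=\overline{f}\circ\pi$ of any invariant function and setting $F=\overline{f}\circ\overline{K}^{-1}$ on $\mathrm{Im}(K)$ is exactly how the direct part goes. Your converse via the orbit indicator $\chi_{\Orb(x)}$ is the standard set-theoretic argument, and your closing remark --- that the whole converse hinges on admitting arbitrary, possibly discontinuous, invariant functions --- is exactly the caveat the paper records when quoting Weyl that the word ``function'' must be understood in its widest scope; this is also why the theorem needs no topological, smoothness or polynomiality hypotheses on $G$, $\SX$ or the action.
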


Note that the cardinality of a minimal separating set/functional basis is not well-defined. It may vary from one minimal set to another. Besides, a lower bound on the cardinality of such a set depends drastically on the class of functions (continuous, differentiable, \ldots) for which it is defined. For instance, Wang~\cite{Wan1970a} (see also~\cite[p.39]{Boe1987}) has noticed that, by omitting continuity, it is always possible to construct a separating set \emph{of only one element}. On the other side, if $\SX/G$ is (at least) a topological manifold and the class of invariant functions considered are at least continuous, then the cardinality of a functional basis is at least the dimension of the quotient space $\SX/G$, as detailed in the following remark.

\begin{rem}\label{rem:Minimality_and_Dimension}
  When the orbit space $\SX/G$ is a \emph{topological manifold} of dimension $d$, the cardinality of any separating set $\set{\kappa_{1},\dots,\kappa_{r}}$ of continuous functions is bigger than the dimension of $\SX/G$ ($r\geq d$). This is a consequence of the \emph{invariance of domain theorem}~\cite{Brouwer1912,HY1988}, which states that if there is a \emph{continuous injective} mapping $f$ from an open subset $U$ of $\RR^{d}$ into $\RR^{r}$, then, necessarily $r\geq d$.
\end{rem}

% ----------------------------------------------------------------
\subsection{Linear representation of a compact Lie group}
% ----------------------------------------------------------------

From now on, we focus on a linear action of a \emph{compact Lie group} $G$ on a vector space $\SV$ (usually called a \emph{linear representation of $G$ on $\SV$}). In that case, there exists only a finite number of symmetry classes $[H_{1}],\dots,[H_l]$ and $\SV$ splits into a disjoint union of strata~\cite{AS1983,Bredon1960}
\begin{equation*}
  \SV=\Sigma_{[H_{1}]}\cup \dotsc \cup\Sigma_{[H_l]},
\end{equation*}
where each stratum $\Sigma_{[H]}$ is a $G$-stable smooth submanifold of $\SV$~\cite{Bre1972,AS1983,Olver1995,AKP2014}.

We shall denote by $\RR[\SV]$, the \emph{algebra} of polynomial functions on $\SV$, and by
\begin{equation*}
  \RR[\SV]^{G}:=\set{\rp\in \RR[\SV];\; \rp(g\star \vv)=\rp(\vv),\quad \forall g\in G,\, \forall\vv\in \SV},
\end{equation*}
the subalgebra of $\RR[\SV]$ consisting of polynomial invariants. As a consequence of Hilbert's finiteness theorem~\cite{Hil1993,Stu2008}, the algebra $\RR[\SV]^{G}$ is finitely generated and any finite set $\set{I_{1}, \dotsc , I_{N}}$ of generators is called an \emph{integrity basis}. We recall that the generating property means that each $G$-invariant polynomial $J\in\RR[\SV]^{G}$ is a polynomial function in $I_{1},\dotsc,I_{N}$:
\begin{equation*}
  J(\vv) = \rp(I_{1}(\vv), \dotsc ,I_{N}(\vv)), \qquad \vv \in \SV,
\end{equation*}
where $\rp$ is a polynomial in $N$ variables. An integrity basis is \emph{minimal} if no proper subset of it is an integrity basis.

As we are dealing with linear representations of a compact Lie group on a real vector space, any integrity basis is also a separating set of the orbit space $\SV/G$ (see~\cite[Appendix C]{AS1983}), and is thus a functional basis of $\mathcal{F}(\SV)^{G}$.

We will end this section by formulating a theorem which will be helpful to achieve our goal which is to produce minimal functional bases for the stable subsets $\Sigma_{[H]}$ of $\SV$, rather than for $\SV$ itself.

\begin{thm}\label{thm:Key_Thm_Separating_Set}
  Let $\mathcal{B}:=\set{I_{1}, \dotsc , I_{N}}$ be an integrity basis of $\RR[\SV]^{G}$, and $\Sigma_{[H]}$, a symmetry stratum with $d=\dim(\Sigma_{[H]}/G)$. Suppose that there exist $G$-invariant continuous functions $\kappa_{1},\dots,\kappa_d$ in $\mathcal{F}(\Sigma_{[H]})^{G}$ and functions $F_{1},\dots,F_{N}$ such that
  \begin{equation*}
    I_{k}(\vv)=F_{k}(\kappa_{1}(\vv),\dotsc,\kappa_{d}(\vv)),\quad \forall \vv\in \Sigma_{[H]},\quad \forall k=1,\dotsc,N.
  \end{equation*}
  Then $\set{\kappa_{1},\dotsc,\kappa_{d}}$ is a minimal separating set of $\Sigma_{[H]}/G$ and a minimal functional basis of $\mathcal{F}(\Sigma_{[H]})^{G}$.
\end{thm}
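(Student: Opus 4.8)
The plan is to establish three things in turn: that $\set{\kappa_{1},\dotsc,\kappa_{d}}$ separates the orbits lying in $\Sigma_{[H]}$, that it is consequently a functional basis, and finally that it is minimal in both senses. The first two claims will follow almost formally from the hypotheses together with results already recorded in the text; the minimality will rest on a dimension-counting argument via invariance of domain, which I expect to be the only delicate point.

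First I would prove the separating property. Recall that the integrity basis $\mathcal{B}=\set{I_{1},\dotsc,I_{N}}$ is itself a separating set of $\SV/G$, hence \emph{a fortiori} separates orbits of points restricted to the $G$-stable subset $\Sigma_{[H]}$. So take $\vv,\overline{\vv}\in\Sigma_{[H]}$ with $\kappa_{i}(\vv)=\kappa_{i}(\overline{\vv})$ for all $i=1,\dotsc,d$. Using the hypothesis $I_{k}=F_{k}(\kappa_{1},\dotsc,\kappa_{d})$ valid on $\Sigma_{[H]}$, I would conclude $I_{k}(\vv)=I_{k}(\overline{\vv})$ for every $k$, and therefore $\Orb(\vv)=\Orb(\overline{\vv})$ by the separating property of $\mathcal{B}$. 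The converse implication is immediate: each $\kappa_{i}$ is $G$-invariant, hence constant on orbits, so $\Orb(\vv)=\Orb(\overline{\vv})$ forces $\kappa_{i}(\vv)=\kappa_{i}(\overline{\vv})$. This shows $\set{\kappa_{1},\dotsc,\kappa_{d}}$ is a separating set of $\Sigma_{[H]}/G$. The functional-basis claim then follows directly from the Wineman--Pipkin theorem (Theorem~\ref{thm:sepimpliesfunc}), applied to the group $G$ acting on the set $\SX=\Sigma_{[H]}$.

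It remains to establish minimality, which is where the real work lies. Because $d=\dim(\Sigma_{[H]}/G)$, the orbit space $\Sigma_{[H]}/G$ is a topological manifold of dimension $d$ (using that $\Sigma_{[H]}$ is a $G$-stable smooth submanifold and that strata of a compact Lie group representation have manifold quotients). Any proper subset of $\set{\kappa_{1},\dotsc,\kappa_{d}}$ has at most $d-1$ elements, all of them continuous $G$-invariant functions. By Remark~\ref{rem:Minimality_and_Dimension} --- that is, by invariance of domain --- a separating set of continuous functions on a $d$-dimensional manifold must have cardinality at least $d$. Hence no proper subset can separate $\Sigma_{[H]}/G$, so $\set{\kappa_{1},\dotsc,\kappa_{d}}$ is a \emph{minimal} separating set; minimality as a functional basis follows from the equivalence recorded in Theorem~\ref{thm:sepimpliesfunc}.

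The main obstacle I anticipate is the justification that $\Sigma_{[H]}/G$ is genuinely a topological manifold of the asserted dimension $d$, so that the invariance-of-domain bound of Remark~\ref{rem:Minimality_and_Dimension} applies cleanly. This is already implicit in the well-definedness of the integer $d=\dim(\Sigma_{[H]}/G)$, but it deserves an explicit pointer to the stratification results for compact Lie group representations cited earlier in the section; everything else is bookkeeping built on the two quoted theorems.
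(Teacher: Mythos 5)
Your proposal is correct and follows essentially the same route as the paper's proof: use the fact that an integrity basis is a separating set of $\SV/G$ for a compact Lie group representation, transfer equality of the $\kappa_{i}$ to equality of the $I_{k}$ via the hypothesis, conclude the separating and functional-basis properties from the Wineman--Pipkin theorem, and obtain minimality from the invariance-of-domain bound of Remark~\ref{rem:Minimality_and_Dimension}. The only difference is that you spell out details the paper leaves implicit (the trivial converse implication and the manifold structure of $\Sigma_{[H]}/G$, which the paper cites from the stratification literature).
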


\begin{proof}
  As already noticed, for a real representation of a compact Lie group, an integrity basis $\mathcal{B}$ is also a separating set of $\SV/G$~\cite[Appendix C]{AS1983}. By hypothesis, for any $\vv,\overline{\vv}\in \Sigma_{[H]}$
  \begin{equation*}
    \forall i,\quad \kappa_{i}(\vv)=\kappa_{i}(\overline{\vv}) \implies \forall k,\quad I_{k}(\vv)=I_{k}(\overline{\vv}).
  \end{equation*}
  Hence, $\Orb(\vv)=\Orb(\overline{\vv})$, and we deduce that the set $\set{\kappa_{1},\dotsc,\kappa_{d}}$ is a separating set of $\Sigma_{[H]}/G$, as well as a functional basis of $\mathcal{F}(\Sigma_{[H]})^{G}$ by theorem~\ref{thm:sepimpliesfunc}. Finally, the minimality is a direct consequence of remark~\ref{rem:Minimality_and_Dimension}.
\end{proof}

% ----------------------------------------------------------------
\section{Functional bases on symmetry strata of second-order tensors}
\label{sec:S2}
% ----------------------------------------------------------------

Let us first illustrate the notions introduced in~\autoref{sec:FBTS} for the standard action of the rotation group $G = \SO(3)$ on the vector space $\SV=\Sym^{2}(\RR^{3})$ of symmetric second-order tensors on $\RR^{3}$. The action is written $g\star \ba:=g \ba g^{t}$ and there are three different symmetry classes (orthotropic $[\DD_{2}]$, transversely isotropic $[\OO(2)]$ and isotropic $[\SO(3)]$, see~\autoref{sec:Ela-symmetry-groups} for group definitions). The three corresponding symmetry strata $\Sigma_{[\DD_{2}]}$, $\Sigma_{[\OO(2)]}$ and $\Sigma_{[\SO(3)]}$, are characterized by polynomial equations. These conditions can be formulated, as algebraic equations involving either polynomial invariants, or \emph{polynomial covariants}~\cite{KP2000}.

Each second-order tensor $\ba\in \Sym^{2}(\RR^{3})$ splits as $\ba=\ba'+\frac{1}{3}(\tr\ba) \bq$, where
the deviatoric part $\ba'$ is a polynomial (linear) covariant of $\ba$, meaning that $\ba'$ is expressed polynomially (linearly) in the $a_{ij}$, and that for any $g\in \SO(3)$,
\begin{equation*}
  (g\star \ba)'=g\star \ba'.
\end{equation*}
A less common but very important polynomial covariant of $\ba$ was obtained in~\cite{OKDD2021} using the generalized cross product \eqref{eq:symmetric-tensor-product},
\begin{equation*}
  \bS(\ba):=\ba\times \ba^{2}\in \Sym^{3}(\RR^{3}),
  \quad \textrm{with} \quad
  g\star \left(\ba\times \ba^{2}\right)=(g\star\ba)\times (g\star\ba)^{2},
\end{equation*}
for any rotation $g$.

The algebraic equations characterizing each symmetry stratum of $\Sym^{2}(\RR^{3})$ are stated in table~\ref{tab:H2_Sym_Class_Equations}, where we consider the three following polynomial invariants
\begin{equation}\label{eq:Int_Basis_Sym2}
  I_{1}:=\tr\ba,\quad J_{2}:=\tr (\ba^{\prime \, 2}), \qquad J_{3}:=\tr (\ba^{\prime \, 3}),
\end{equation}
which constitute a minimal integrity basis of $\RR[\Sym^{2}(\RR^{3})]^{\SO(3)}$.

\begin{rem}
  The characterization conditions using covariants are of degree (in $\ba$) half the degree of those using invariants. Indeed
  \begin{equation*}
    J_{2}= \norm{\ba'}^{2}, \qquad J_{2}^{3}-6J_{3}^{2} = 12\norm{\ba\times \ba^{2}}^{2}.
  \end{equation*}
\end{rem}

\begin{table}[h]
  \renewcommand{\arraystretch}{1.2}
  \begin{tabular}{|c||c|c|}
    \hline
    Stratum            & Conditions in terms of invariants         & Conditions in terms of covariants        \\
    \hline \hline
    $\strata{\DD_{2}}$ & $J_{2}^{3}-6J_{3}^{2}\ne 0$               & $\ba\times\ba^{2}\ne \bz$                \\
    $\strata{\OO(2)}$  & $J_{2}^{3}-6J_{3}^{2}=0$ and $J_{2}\ne 0$ & $\ba\times\ba^{2}=\bz$ and $\ba'\ne \bz$ \\
    $\strata{\SO(3)}$  & $J_{2}=0$                                 & $\ba'=\bz$                               \\
    \hline
  \end{tabular}
  \caption{Algebraic equations defining the symmetry strata of $\Sym^{2}(\RR^{3})$~\cite{OKDD2021}.}
  \label{tab:H2_Sym_Class_Equations}
\end{table}

In contrast to the entire orbit space $\SV/G$, each orbit space $\Sigma_{[H]}/G$ is a \emph{smooth manifold}~\cite{AS1983,Bre1972,Olver1995} and when $\SV = \Sym^{2}(\RR^{3})$ we have:
\begin{equation*}
  \dim(\Sigma_{[\DD_{2}]}/\SO(3))=3,\quad \dim(\Sigma_{[\OO(2)]}/\SO(3))=2,\quad \dim(\Sigma_{[\SO(3)]}/\SO(3))=1.
\end{equation*}

Next, we will show how theorem~\ref{thm:Key_Thm_Separating_Set} helps us to obtain minimal functional bases for the orthotropic ($\Sigma_{[\DD_{2}]}$) and the transversely isotropic ($\Sigma_{[\OO(2)]}$) strata.

% ----------------------------------------------------------------
\subsection{Orthotropic stratum}
% ----------------------------------------------------------------

The orbit space $\Sigma_{[\DD_{2}]}/\SO(3)$ is three dimensional. An integrity basis is also a separating set~\cite[Appendix C]{AS1983}, and by the Wineman--Pipkin theorem~\ref{thm:sepimpliesfunc}, it is also a functional basis. Thus, the set~\eqref{eq:Int_Basis_Sym2}, satisfying the hypotheses of theorem~\ref{thm:Key_Thm_Separating_Set}, is an example of application of this theorem, which is formulated below.

\begin{lem}
  A \emph{minimal} functional basis for $\strata{\DD_{2}}$, \emph{i.e.}, for orthotropic second-order tensors, consists of the three polynomial invariants
  \begin{equation*}
    \kappa_{1}:=I_{1}=\tr\ba,\qquad \kappa_{2}:=J_{2}=\tr (\ba^{\prime\, 2}), \qquad \kappa_{3}:=J_{3}=\tr (\ba^{\prime\, 3}).
  \end{equation*}
\end{lem}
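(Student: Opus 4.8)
The plan is to read this Lemma as the baseline (degenerate) instance of Theorem~\ref{thm:Key_Thm_Separating_Set}, in which the stratum in question is the generic, full-dimensional one and the proposed invariants are literally the integrity basis itself. First I would recall the two inputs already established in the excerpt: by~\eqref{eq:Int_Basis_Sym2}, the set $\mathcal{B}:=\{I_{1},J_{2},J_{3}\}$ is a (minimal) integrity basis of $\RR[\Sym^{2}(\RR^{3})]^{\SO(3)}$, so $N=3$; and $d:=\dim(\strata{\DD_{2}}/\SO(3))=3$. The crucial numerical observation is that here $N=d=3$, which is precisely why no genuine reduction is needed on this stratum.

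Next I would verify the hypotheses of Theorem~\ref{thm:Key_Thm_Separating_Set} by making the tautological choice $\kappa_{1}:=I_{1}$, $\kappa_{2}:=J_{2}$, $\kappa_{3}:=J_{3}$. These are polynomial, hence continuous, $\SO(3)$-invariant functions, so they belong to $\mathcal{F}(\strata{\DD_{2}})^{\SO(3)}$. Taking $F_{k}$ to be the $k$-th coordinate projection $F_{k}(x_{1},x_{2},x_{3})=x_{k}$, the required rewriting
\begin{equation*}
  I_{k}(\ba)=F_{k}\bigl(\kappa_{1}(\ba),\kappa_{2}(\ba),\kappa_{3}(\ba)\bigr),\qquad k=1,2,3,
\end{equation*}
holds trivially on all of $\Sym^{2}(\RR^{3})$, and in particular on $\strata{\DD_{2}}$.

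With the hypotheses met, I would simply invoke Theorem~\ref{thm:Key_Thm_Separating_Set} to conclude that $\{\kappa_{1},\kappa_{2},\kappa_{3}\}$ is a minimal separating set of $\strata{\DD_{2}}/\SO(3)$ and a minimal functional basis of $\mathcal{F}(\strata{\DD_{2}})^{\SO(3)}$. Unpacking what the key theorem does here: the separating property comes from the integrity basis being a separating set of the whole orbit space (so, a fortiori, of the sub-stratum), and the functional-basis property then follows from the Wineman--Pipkin theorem~\ref{thm:sepimpliesfunc}; minimality is delivered by the dimension count $3=d$ together with the invariance-of-domain argument recorded in remark~\ref{rem:Minimality_and_Dimension}, which forbids any separating set of continuous invariants with fewer than $d$ elements.

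There is essentially no obstacle to overcome: the entire mathematical content sits in the cited facts (the integrity basis~\eqref{eq:Int_Basis_Sym2}, the value $d=3$, and the machinery packaged into Theorem~\ref{thm:Key_Thm_Separating_Set}). The only point deserving care is the bookkeeping check that the cardinality of the proposed set equals $d$, since it is this equality — rather than any nontrivial elimination among the invariants — that simultaneously certifies that the set is large enough to separate and small enough to be minimal.
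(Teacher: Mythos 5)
Your proposal is correct and follows essentially the same route as the paper: the paper likewise treats this lemma as the tautological application of theorem~\ref{thm:Key_Thm_Separating_Set}, noting that the integrity basis~\eqref{eq:Int_Basis_Sym2} is itself a separating set (hence a functional basis by Wineman--Pipkin), with minimality forced by the dimension count $\dim(\strata{\DD_{2}}/\SO(3))=3$ via the invariance-of-domain argument. Nothing is missing; your explicit choice of coordinate projections for the $F_{k}$ just spells out what the paper leaves implicit.
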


% ----------------------------------------------------------------
\subsection{Transversely isotropic stratum}
\label{sec:aIT}
% ----------------------------------------------------------------

In this case, we first note that a second-order tensor $\ba$ is in the symmetry stratum $\strata{\OO(2)}$ if and only if there exists a rotation $g\in \SO(3)$ such that $\ba=g\star \ba_0$, where $\ba_0$ is
\begin{equation}\label{eq:Form_For_Sym2_IsoT}
  \ba_0=\begin{pmatrix}
    \delta_{1}-\delta_{2} & 0                     & 0                      \\
    0                     & \delta_{1}-\delta_{2} & 0                      \\
    0                     & 0                     & \delta_{1}+2\delta_{2}
  \end{pmatrix},\quad \delta_{2}\ne 0,
\end{equation}
in the orthonormal basis $(\ee_{i})$. The condition $\delta_{2}\ne 0$ means that $\ba_0$ is genuinely transversely isotropic (and not isotropic). Moreover its symmetry group is the subgroup $\OO(2)$ of $\SO(3)$ defined in~\autoref{sec:Ela-symmetry-groups}.

\begin{lem}\label{lem:H2_IT}
  A \emph{minimal} functional basis for $\strata{\OO(2)}$, \emph{i.e.}, for transversely isotropic symmetric second-order tensors, consists of the two rational invariants
  \begin{equation*}
    \kappa_{1}:=I_{1}, \quad \kappa_{2}:=\displaystyle \frac{J_{3}}{J_{2}}.
  \end{equation*}
\end{lem}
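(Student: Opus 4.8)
The plan is to apply Theorem~\ref{thm:Key_Thm_Separating_Set} to the stratum $\strata{\OO(2)}$, for which $d=\dim(\strata{\OO(2)}/\SO(3))=2$, taking as integrity basis the set $\set{I_{1},J_{2},J_{3}}$ of~\eqref{eq:Int_Basis_Sym2}. It therefore suffices to check that $\kappa_{1}$ and $\kappa_{2}$ are admissible continuous $\SO(3)$-invariant functions on $\strata{\OO(2)}$, and to express each of $I_{1}$, $J_{2}$, $J_{3}$ as a function of $\kappa_{1}$ and $\kappa_{2}$ on the stratum. First I would note that $\kappa_{1}=I_{1}=\tr\ba$ is polynomial, hence continuous everywhere, while the rational invariant $\kappa_{2}=J_{3}/J_{2}$ is continuous on $\strata{\OO(2)}$ precisely because $J_{2}\neq 0$ there, which is exactly the defining condition of the transversely isotropic stratum recorded in table~\ref{tab:H2_Sym_Class_Equations}.

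The heart of the argument is an evaluation on the normal form. Since every $\ba\in\strata{\OO(2)}$ lies on the orbit of some $\ba_{0}$ as in~\eqref{eq:Form_For_Sym2_IsoT}, and all of $I_{1},J_{2},J_{3},\kappa_{1},\kappa_{2}$ are $\SO(3)$-invariant (hence constant on orbits), it is enough to compute on $\ba_{0}$. I would compute $\tr\ba_{0}=3\delta_{1}$, the deviator $\ba_{0}'=\mathrm{diag}(-\delta_{2},-\delta_{2},2\delta_{2})$, and then
\begin{equation*}
  I_{1}=3\delta_{1},\qquad J_{2}=\tr(\ba_{0}'^{\,2})=6\delta_{2}^{2},\qquad J_{3}=\tr(\ba_{0}'^{\,3})=6\delta_{2}^{3},
\end{equation*}
so that $\kappa_{1}=3\delta_{1}$ and $\kappa_{2}=J_{3}/J_{2}=\delta_{2}$ (well defined since $\delta_{2}\neq 0$). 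Reading these relations backwards yields, on the whole stratum,
\begin{equation*}
  I_{1}=\kappa_{1},\qquad J_{2}=6\kappa_{2}^{2},\qquad J_{3}=6\kappa_{2}^{3},
\end{equation*}
which is exactly the hypothesis of Theorem~\ref{thm:Key_Thm_Separating_Set} with $F_{1}(\kappa_{1},\kappa_{2})=\kappa_{1}$, $F_{2}=6\kappa_{2}^{2}$ and $F_{3}=6\kappa_{2}^{3}$. The conclusion that $\set{\kappa_{1},\kappa_{2}}$ is a minimal separating set and a minimal functional basis then follows directly, minimality being guaranteed because $\strata{\OO(2)}/\SO(3)$ is a $2$-dimensional manifold, by remark~\ref{rem:Minimality_and_Dimension}.

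I expect the only genuine subtlety to be the passage from the slice to the whole stratum: one must be sure that the single normal form~\eqref{eq:Form_For_Sym2_IsoT} meets \emph{every} orbit of $\strata{\OO(2)}$, so that an identity between invariant functions verified on $\ba_{0}$ holds identically on the stratum. This is immediate from the spectral description of transversely isotropic tensors, whose unordered spectrum is $\set{\delta_{1}-\delta_{2},\,\delta_{1}-\delta_{2},\,\delta_{1}+2\delta_{2}}$ with $\delta_{2}\neq 0$, the pair $(\delta_{1},\delta_{2})$ being recovered bijectively from the repeated eigenvalue and the simple one. In particular the sign of $\delta_{2}$ is intrinsic to the orbit, which is the reason why $\kappa_{2}=\delta_{2}$ — and not merely $\abs{\delta_{2}}$ or $\delta_{2}^{2}$ — is the correct invariant coordinate separating $\strata{\OO(2)}/\SO(3)$.
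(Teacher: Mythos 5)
Your proposal is correct and follows essentially the same route as the paper's own proof: evaluation of $I_{1},J_{2},J_{3}$ on the normal form~\eqref{eq:Form_For_Sym2_IsoT}, inversion to get $I_{1}=\kappa_{1}$, $J_{2}=6\kappa_{2}^{2}$, $J_{3}=6\kappa_{2}^{3}$, and then theorem~\ref{thm:Key_Thm_Separating_Set} with $\dim(\strata{\OO(2)}/\SO(3))=2$. Your added remarks on the continuity of $\kappa_{2}$ (since $J_{2}\neq 0$ on the stratum) and on why the sign of $\delta_{2}$ is orbit-intrinsic are sound elaborations of points the paper leaves implicit.
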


\begin{proof}
  Evaluating the invariants $J_{2}$ and $J_{3}$ on~\eqref{eq:Form_For_Sym2_IsoT}, we get
  \begin{equation*}
    J_{2}(\ba)=6\delta_{2}^{2},\quad J_{3}(\ba)=6\delta_{2}^{3}, \quad \delta_{2} \ne 0,
  \end{equation*}
  and hence $\kappa_{2}(\ba)=\delta_{2}$. We have therefore
  \begin{equation*}
    I_{1}(\ba) = \kappa_{1}(\ba), \qquad J_{2}(\ba)=6\kappa_{2}^{2}(\ba), \qquad J_{3}(\ba)=6\kappa_{2}^{3}(\ba),
  \end{equation*}
  and the result follows by theorem~\ref{thm:Key_Thm_Separating_Set} applied to $\SV=\Sym^{2}(\RR^{3})$ and the symmetry stratum $\Sigma_{[\OO(2)]}$, with $\dim\left(\Sigma_{[\OO(2)]}/\SO(3)\right)=2$.
\end{proof}

The rational invariants $\kappa_{1},\kappa_{2}$ in lemma~\ref{lem:H2_IT} can be considered as \emph{global parameters} of $\SX=\Sigma_{[\OO(2)]}/\SO(3)$.
\begin{prop}\label{prop:devaIT}
  Any transversely isotropic second-order symmetric tensor $\ba\in \strata{\OO(2)}$ can be written
  \begin{equation}\label{eq:devaIT}
    \ba=\frac{1}{3}\kappa_{1}\bq+ 3 \kappa_{2}\, \bt, \qquad
    \kappa_{1}:=I_{1},  \qquad \kappa_{2}:= \frac{J_{3}}{J_{2}}= \frac{\sign(J_{3})}{\sqrt{6}} \norm{\ba'},
  \end{equation}
  with $\bt:=(\nn\otimes \nn)'$, $\norm{\nn} =1$, where the vector $\nn$ defines the axis of transverse isotropy and $\sign(x)=x/\abs{x}$ is the sign function.
\end{prop}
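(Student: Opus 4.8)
The plan is to reduce to the canonical form \eqref{eq:Form_For_Sym2_IsoT} and then transport the resulting identity by covariance. As in the setup of lemma~\ref{lem:H2_IT}, any $\ba\in\strata{\OO(2)}$ can be written $\ba=g\star\ba_{0}$ for some $g\in\SO(3)$, with $\ba_{0}$ of the form \eqref{eq:Form_For_Sym2_IsoT} and $\delta_{2}\neq 0$. In that frame the axis of transverse isotropy is $\ee_{3}$, and a direct computation gives $\bt_{0}:=(\ee_{3}\otimes\ee_{3})'=\ee_{3}\otimes\ee_{3}-\tfrac{1}{3}\bq$ (using $\tr(\ee_{3}\otimes\ee_{3})=1$), together with $\tr\ba_{0}=3\delta_{1}$. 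One then checks by inspection that
\[
  \tfrac{1}{3}(\tr\ba_{0})\,\bq+3\delta_{2}\,\bt_{0}=(\delta_{1}-\delta_{2})\,\bq+3\delta_{2}\,\ee_{3}\otimes\ee_{3}=\ba_{0},
\]
which is exactly the asserted decomposition for $\ba_{0}$.

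Second, I would promote this to the general tensor $\ba$ by setting $\nn:=g\,\ee_{3}$, a unit vector. Applying $g\star$ to the identity above and using that $\bq$ is $\SO(3)$-invariant, that $g\star(\ee_{3}\otimes\ee_{3})'=(\nn\otimes\nn)'=\bt$ (the deviatoric operation commutes with the rotation action, since $\tr$ and $\bq$ are invariant and $g\star(\ee_{3}\otimes\ee_{3})=(g\ee_{3})\otimes(g\ee_{3})$), and that $\kappa_{1}=\tr\ba$ and $\kappa_{2}=J_{3}/J_{2}$ are invariants, hence take the same values on $\ba$ and $\ba_{0}$, yields
\[
  \ba=g\star\ba_{0}=\tfrac{1}{3}\kappa_{1}\,\bq+3\kappa_{2}\,\bt,
\]
which is precisely \eqref{eq:devaIT}. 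Note that $\nn$ is only defined up to sign, but this is harmless since $\bt$ depends on $\nn$ only through $\nn\otimes\nn$.

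Third, I would verify the two expressions for $\kappa_{2}$. From the proof of lemma~\ref{lem:H2_IT} we already have $J_{2}(\ba)=6\delta_{2}^{2}$ and $J_{3}(\ba)=6\delta_{2}^{3}$, so $\kappa_{2}=J_{3}/J_{2}=\delta_{2}$, which in particular matches the coefficient appearing above. Combining $J_{2}=\norm{\ba'}^{2}$ (from the Remark) with $\sign(J_{3})=\sign(\delta_{2})$ gives $\norm{\ba'}=\sqrt{6}\,\abs{\delta_{2}}$, and therefore
\[
  \frac{\sign(J_{3})}{\sqrt{6}}\,\norm{\ba'}=\sign(\delta_{2})\,\abs{\delta_{2}}=\delta_{2}=\kappa_{2},
\]
establishing the second formula.

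There is no serious obstacle here: the statement is essentially a renormalisation of the canonical form combined with the equivariance of $\bq$ and $(\nn\otimes\nn)'$. The only point demanding care is the sign bookkeeping in $\kappa_{2}=\frac{\sign(J_{3})}{\sqrt{6}}\norm{\ba'}$, which must reproduce both signs of $\delta_{2}$ (a genuinely transversely isotropic tensor need not have $\delta_{2}>0$); the factor $\sign(J_{3})$ is exactly what selects the correct branch when passing from $\norm{\ba'}\geq 0$ back to the signed coefficient $\delta_{2}$.
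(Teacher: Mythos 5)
Your proof is correct, and it follows essentially the same route the paper takes (the paper leaves Proposition~\ref{prop:devaIT} unproved as an immediate consequence of the computation in the proof of Lemma~\ref{lem:H2_IT}): reduce to the normal form \eqref{eq:Form_For_Sym2_IsoT}, observe that $\ba_{0}=\tfrac{1}{3}(\tr\ba_{0})\,\bq+3\delta_{2}(\ee_{3}\otimes\ee_{3})'$, identify $\kappa_{2}=J_{3}/J_{2}=\delta_{2}=\sign(J_{3})\norm{\ba'}/\sqrt{6}$ from $J_{2}=6\delta_{2}^{2}$, $J_{3}=6\delta_{2}^{3}$, and transport the identity back by equivariance. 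Your explicit handling of the sign of $\delta_{2}$ and of the ambiguity $\nn\mapsto-\nn$ is a welcome (and correct) elaboration of details the paper glosses over.
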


% ----------------------------------------------------------------
\section{Functional bases on symmetry strata of harmonic fourth-order tensors}
\label{sec:H4}
% ----------------------------------------------------------------

Let us now consider the vector space of \emph{fourth-order harmonic tensors} in $\RR^{3}$
\begin{equation*}
  \HH^{4}(\RR^{3}) := \set{\bH\in \Sym^{4}(\RR^{3}),\quad  \tr \bH=0},
\end{equation*}
\emph{i.e.}, of traceless totally symmetric fourth-order tensors. It is of dimension nine and appears as an irreducible subspace in the harmonic decomposition of $\Ela$. Its structure is more tricky than the one of $\HH^{2}(\RR^{3})$ and has been investigated in~\cite{FV1996} and~\cite{AKP2014,OKDD2021,OKDD2018}. The eight symmetry classes $[H]$ for $\HH^{4}(\RR^{3})$ are the same as for $\Ela$ (see~\autoref{fig:bifurcationEla},~\autoref{sec:stratification-H4}). Each orbit space $\Sigma_{[H]}/\SO(3)$ is a smooth manifold, and (see~\cite{AKP2014}, for instance)
\begin{equation*}
  \begin{aligned}
     & \dim(\Sigma_{[\triv]}/\SO(3))=6,   &  & \dim(\Sigma_{[\ZZ_{2}]}/\SO(3))=5, &  & \dim(\Sigma_{[\DD_{2}]}/\SO(3))=3,
    \\
     & \dim(\Sigma_{[\DD_{4}]}/\SO(3))=2, &  & \dim(\Sigma_{[\DD_{3}]}/\SO(3))=2, &  & \dim(\Sigma_{[\OO(2)]}/\SO(3))=1,
    \\
     & \dim(\Sigma_{[\octa]}/\SO(3))=1,   &  & \dim(\Sigma_{[\SO(3)]}/\SO(3))=0.
  \end{aligned}
\end{equation*}

A minimal integrity basis of nine polynomial invariants for the invariant algebra of $\HH^{4}(\RR^{3})$, has been derived by Boehler, Kirillov and Onat~\cite{BKO1994}, using previous works on binary forms by Shioda~\cite{Shi1967}. An alternative minimal integrity basis has been proposed in~\cite[Theorem 2.7]{DADKO2019}.
It involves only the two second-order covariants $\bd_{2}$ and $\bd_{3}$~\cite{BKO1994}
\begin{equation*}
  \bd_{2} := \tr_{13} \bH^{2}, \qquad \bd_{3} := \tr_{13} \bH^{3},
\end{equation*}
which, in components write
\begin{equation*}
  (\bd_{2})_{ij} = H_{ipqr}H_{pqrj}, \quad \text{and} \quad (\bd_{3})_{ij} = H_{ikpq}H_{pqrs}H_{rskj}.
\end{equation*}

Here, we shall work with a slightly modified integrity basis,
\begin{equation} \label{eq:Ikinvariants}
  \begin{array} {lll}
    I_{2} := \tr \bd_{2},                                    & I_{3} := \tr \bd_{3},                & I_{4} := \tr {\bd_{2}^{\prime}}^{2},
    \\
    I_{5} := \tr (\bd_{2}^{\prime} \bd_{3}^{\prime}),        & I_{6} := \tr {\bd_{2}^{\prime}}^{3}, & I_{7} := \tr ({\bd_{2}^{\prime}}^{2}\bd_{3}^{\prime}),
    \\
    I_{8} := \tr (\bd_{2}^{\prime} {\bd_{3}^{\prime}}^{2}) , & I_{9} := \tr {\bd_{3}^{\prime}}^{3}, & I_{10} := \tr ({\bd_{2}^{\prime}}^{2} {\bd_{3}^{\prime}}^{2}).
  \end{array}
\end{equation}

In the following, we consider Kelvin's representation of a fourth-order harmonic tensor $\bH=(H_{ijkl})$, \emph{i.e.}, in an orthonormal basis, the symmetric matrix \cite{TKel1878,Ryc1984}
\begin{equation*}
  [\bH] :=
  \begin{pmatrix}
    H_{1111}         & H_{1122}         & H_{1133}         & \sqrt{2}H_{1123} & \sqrt{2}H_{1113} & \sqrt{2}H_{1112} \\
    H_{1122}         & H_{2222}         & H_{2233}         & \sqrt{2}H_{2223} & \sqrt{2}H_{1223} & \sqrt{2}H_{1222} \\
    H_{1133}         & H_{2233}         & H_{3333}         & \sqrt{2}H_{2333} & \sqrt{2}H_{1333} & \sqrt{2}H_{1233} \\
    \sqrt{2}H_{1123} & \sqrt{2}H_{2223} & \sqrt{2}H_{2333} & 2H_{2233}        & 2H_{1233}        & 2H_{1223}        \\
    \sqrt{2}H_{1113} & \sqrt{2}H_{1223} & \sqrt{2}H_{1333} & 2H_{1233}        & 2H_{1133}        & 2H_{1123}        \\
    \sqrt{2}H_{1112} & \sqrt{2}E_{1222} & \sqrt{2}H_{1233} & 2H_{1223}        & 2H_{1123}        & 2H_{1122}
  \end{pmatrix}
\end{equation*}
with 9 ($=\dim \HH^{4}(\RR^{3}))$ independent components since
\begin{align*}
   & H_{1111}  = - H_{1122} - H_{1133}, &                                    & H_{2222}  = -H_{1122}-H_{2233},
   &                                    & H_{3333}  = - H_{1133} - H_{2233},
  \\
   & H_{2333}  = - H_{1123} - H_{2223}, &                                    & H_{1113}  = - H_{1223} - H_{1333}, &  & H_{1222}  = - H_{1112} - H_{1233}.
\end{align*}

% ----------------------------------------------------------------
\subsection{Cubic stratum}
% ----------------------------------------------------------------

A fourth-order tensor $\bH\in \HH^{4}(\RR^{3})$ is at least cubic if and only if there exists a rotation $g\in \SO(3)$ such that $\bH=g\star \bH_{\octa}$, where $\bH_{\octa}$ has the following Kelvin representation~\cite{AKP2014},
\begin{equation}
  \label{eq:cubic_case}
  [\bH_{\octa}]= \left(
  \begin{array}{cccccc}
      8\delta  & -4\delta & -4\delta & 0        & 0        & 0        \\
      -4\delta & 8\delta  & -4\delta & 0        & 0        & 0        \\
      -4\delta & -4\delta & 8\delta  & 0        & 0        & 0        \\
      0        & 0        & 0        & -8\delta & 0        & 0        \\
      0        & 0        & 0        & 0        & -8\delta & 0        \\
      0        & 0        & 0        & 0        & 0        & -8\delta
    \end{array}
  \right),
\end{equation}
and $\bH_{\octa}$ is cubic if and only if $\delta\ne 0$.
The evaluation of the invariants~\eqref{eq:Ikinvariants} on~\eqref{eq:cubic_case} is
\begin{equation}\label{eq:Jkcubic}
  I_{2}(\bH) = 480\delta^{2}, \qquad I_{3}(\bH) = 1920\delta^{3}, \qquad I_{k}(\bH) = 0
  \quad \textrm{for $k=4$ to $10$.}
\end{equation}

\begin{prop}\label{prop:SepHcubic}
  A minimal functional basis for $\strata{\octa}$, \emph{i.e.}, for cubic fourth-order harmonic tensors $\bH\in \HH^{4}(\RR^{3})$, is reduced to the single rational invariant $\kappa := {I_{3}}/{I_{2}}$.
\end{prop}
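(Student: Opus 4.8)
The plan is to apply \autoref{thm:Key_Thm_Separating_Set} directly, exactly as was done for the transversely isotropic stratum of $\Sym^{2}(\RR^{3})$ in \autoref{lem:H2_IT}. Since $\dim(\Sigma_{[\octa]}/\SO(3))=1$, I need exactly $d=1$ continuous $\SO(3)$-invariant function $\kappa$ on $\strata{\octa}$ such that every integrity basis element $I_{2},\dotsc,I_{10}$ can be written as a function of $\kappa$ alone. The candidate is the rational invariant $\kappa := I_{3}/I_{2}$, and everything reduces to checking that each $I_{k}$ is a well-defined function of $\kappa$ on the cubic stratum.

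First I would invoke the normal-form description preceding the statement: any cubic $\bH\in\strata{\octa}$ satisfies $\bH=g\star\bH_{\octa}$ for some $g\in\SO(3)$, where $\bH_{\octa}$ has the Kelvin form \eqref{eq:cubic_case} with $\delta\ne 0$. Because the $I_{k}$ are $\SO(3)$-invariant, it suffices to evaluate them on $\bH_{\octa}$ itself. The values are recorded in \eqref{eq:Jkcubic}: $I_{2}=480\delta^{2}$, $I_{3}=1920\delta^{3}$, and $I_{k}=0$ for $k=4,\dotsc,10$. From these I would compute
\begin{equation*}
  \kappa(\bH)=\frac{I_{3}}{I_{2}}=\frac{1920\delta^{3}}{480\delta^{2}}=4\delta,
\end{equation*}
which is well defined since $I_{2}=480\delta^{2}\ne 0$ on the whole cubic stratum. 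Solving gives $\delta=\kappa/4$, so that $\kappa$ recovers the single free parameter $\delta$; this is the step that confirms $\kappa$ genuinely parametrizes $\Sigma_{[\octa]}/\SO(3)$.

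Next I would express each integrity basis element as a function $F_{k}$ of $\kappa$: from $\delta=\kappa/4$ one gets $I_{2}=480(\kappa/4)^{2}=30\kappa^{2}$, $I_{3}=1920(\kappa/4)^{3}=30\kappa^{3}$, and $I_{k}=0=F_{k}(\kappa)$ for $k=4,\dotsc,10$ (the constant zero function). This exhibits the hypothesis of \autoref{thm:Key_Thm_Separating_Set} with $N=9$ generators and $d=1$ invariant. The continuity of $\kappa$ on $\strata{\octa}$ is immediate: $I_{2},I_{3}$ are polynomials and $I_{2}$ is nowhere zero on the cubic stratum, so $\kappa=I_{3}/I_{2}$ is a ratio of polynomials with non-vanishing denominator, hence continuous. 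Applying the theorem then yields simultaneously that $\set{\kappa}$ is a minimal separating set of $\Sigma_{[\octa]}/\SO(3)$ and a minimal functional basis of $\mathcal{F}(\Sigma_{[\octa]})^{\SO(3)}$, the minimality coming from \autoref{rem:Minimality_and_Dimension} since a single function matches the dimension $d=1$.

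There is essentially no hard analytic obstacle here; the argument is a clean specialization of the machinery already built. The only point requiring genuine care — and the step I would treat as the crux — is verifying that the normal form \eqref{eq:cubic_case} together with the evaluation \eqref{eq:Jkcubic} really captures \emph{every} cubic tensor up to rotation, so that checking the $I_{k}$ on the single slice $\bH_{\octa}$ suffices by $\SO(3)$-invariance. Granting the cited normal-form result from \cite{AKP2014}, the proof is a two-line computation of $\kappa=4\delta$ plus the observation $I_{2}\ne 0$, followed by a direct appeal to \autoref{thm:Key_Thm_Separating_Set}.
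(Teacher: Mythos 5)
Your proposal is correct and follows essentially the same route as the paper: the paper's proof likewise applies \autoref{thm:Key_Thm_Separating_Set} to $\SV=\HH^{4}(\RR^{3})$ and the one-dimensional cubic stratum, noting $\kappa(\bH)=4\delta\ne 0$ and hence $I_{2}=30\kappa^{2}$, $I_{3}=30\kappa^{3}$ (with the remaining $I_{k}$ vanishing). Your write-up merely spells out the normal-form reduction and continuity points that the paper leaves implicit.
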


\begin{proof}
  This is a direct consequence of theorem~\ref{thm:Key_Thm_Separating_Set} applied to $\SV=\HH^{4}(\RR^{3})$ and the cubic stratum $\strata{\octa}$ (of dimension 1). Indeed, we have $\kappa(\bH)=4\delta\ne 0$ for all $\bH\in \strata{\octa}$, and thus
  $I_{2}(\bH)=30\kappa^{2}(\bH)$ and $I_{3}(\bH)=30\kappa^{3}(\bH)$.
\end{proof}

% ----------------------------------------------------------------
\subsection{Transversely isotropic stratum}
% ----------------------------------------------------------------

A fourth-order tensor $\bH\in \HH^{4}(\RR^{3})$ is at least transversely isotropic if and only if there exists a rotation $g\in \SO(3)$ such that $\bH=g\star \bH_{\OO(2)}$, where $\bH_{\OO(2)}$ has the following Kelvin representation~\cite{AKP2014},
\begin{equation}\label{eq:transversly_{i}sotropic_case}
  [ \bH_{\OO(2)}]= \left(
  \begin{array}{cccccc}
      3\,\delta  & \delta     & -4\,\delta & 0          & 0          & 0         \\
      \delta     & 3\,\delta  & -4\,\delta & 0          & 0          & 0         \\
      -4\,\delta & -4\,\delta & 8\,\delta  & 0          & 0          & 0         \\
      0          & 0          & 0          & -8\,\delta & 0          & 0         \\
      0          & 0          & 0          & 0          & -8\,\delta & 0         \\
      0          & 0          & 0          & 0          & 0          & 2\,\delta
    \end{array}
  \right),
\end{equation}
and $\bH_{\OO(2)}$ is transversely isotropic if and only if $\delta\ne 0$.
The evaluation of the invariants~\eqref{eq:Ikinvariants} on~\eqref{eq:transversly_{i}sotropic_case} is
\begin{equation}\label{eq:JkIT}
  \begin{aligned}
     & I_{2} = 280\delta^{2},     &  & I_{3} = 720\delta^{3},               &  & I_{4} = \frac{20000}{3} \delta^{4},
    \\
     & I_{5} = 40000\delta^{5},   &  & I_{6} = \frac{2000000}{9}\delta^{6}, &  & I_{7} = \frac{4000000}{3}\delta^7,
    \\
     & I_{8} = 8000000\delta^{8}, &  & I_{9} = 48000000\delta^9,            &  & I_{10} = 800000000 \delta^{10}.
  \end{aligned}
\end{equation}
Observe that
\begin{equation*}
  \delta= \frac{7}{18} \frac{I_3}{I_2}
  = \frac{27}{250} \frac{I_4}{I_3}
  = \frac{1}{6} \frac{I_5}{I_4}
  = \frac{9}{50} \frac{I_6}{I_5}
  = \frac{1}{6} \frac{I_7}{I_6}
  = \frac{1}{6} \frac{I_8}{I_7}
  = \frac{1}{6} \frac{I_9}{I_8}
  = \frac{3}{50} \frac{I_{10}}{I_9},
\end{equation*}
is a rational invariant.
Following the same proof as for proposition~\ref{prop:SepHcubic}, we obtain the following result.

\begin{prop}\label{prop:SepHisoT}
  A minimal functional basis for $\strata{\OO(2)}$, \emph{i.e.}, for transversely isotropic fourth-order harmonic tensors $\bH\in \HH^{4}(\RR^{3})$, is reduced to the single rational invariant $\kappa := \delta$.
\end{prop}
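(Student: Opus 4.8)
The plan is to argue exactly as in the proof of Proposition~\ref{prop:SepHcubic}, by invoking Theorem~\ref{thm:Key_Thm_Separating_Set} with $\SV = \HH^{4}(\RR^{3})$ and the one-dimensional stratum $\strata{\OO(2)}$ (recall $\dim(\strata{\OO(2)}/\SO(3)) = 1$). Since here $d = 1$, it suffices to exhibit a single continuous $\SO(3)$-invariant $\kappa$ on the stratum through which every element of the integrity basis $\set{I_{2}, \dotsc, I_{10}}$ factors; the theorem then delivers both the separating and the functional-basis properties, together with minimality, in one stroke.

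First I would confirm that $\kappa := \delta$ is a well-defined rational invariant on $\strata{\OO(2)}$. Every $\bH \in \strata{\OO(2)}$ is $\SO(3)$-conjugate to the normal form $\bH_{\OO(2)}$ with $\delta \neq 0$, so by~\eqref{eq:JkIT} we have $I_{2}(\bH) = 280\,\delta^{2} \neq 0$; hence $\delta = \tfrac{7}{18}\, I_{3}(\bH)/I_{2}(\bH)$ is a ratio of two polynomial invariants with nonvanishing denominator, and is therefore an $\SO(3)$-invariant rational function of the components of $\bH$. This same expression certifies that the normal-form parameter $\delta$ is recovered unambiguously from $\bH$ (its sign included, since $I_{3} = 720\,\delta^{3}$ fixes $\sign(\delta)$), so the assignment $\bH \mapsto \delta$ is genuinely single-valued on the stratum.

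Next I would read off from~\eqref{eq:JkIT} that each integrity-basis element is a monomial in $\kappa = \delta$: explicitly $I_{2} = 280\,\kappa^{2}$, $I_{3} = 720\,\kappa^{3}$, $I_{4} = \tfrac{20000}{3}\,\kappa^{4}$, and so on up to $I_{10} = 800000000\,\kappa^{10}$. These are precisely the factorizations $I_{k} = F_{k}(\kappa)$ required by Theorem~\ref{thm:Key_Thm_Separating_Set} (with a single argument $\kappa$), so that theorem yields at once that $\set{\kappa}$ is a minimal separating set of $\strata{\OO(2)}/\SO(3)$ and a minimal functional basis of $\mathcal{F}(\strata{\OO(2)})^{\SO(3)}$.

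I do not expect a genuine obstacle beyond the bookkeeping already handled in the cubic case. The only point deserving care is the single-valuedness of $\delta$ as an invariant on the stratum, which the ratio $I_{3}/I_{2}$ settles (with $I_{2} \neq 0$ guaranteed by $\delta \neq 0$); the minimality is then immediate from $d = 1$ via Remark~\ref{rem:Minimality_and_Dimension}, since any separating set of continuous invariants must have cardinality at least $\dim(\strata{\OO(2)}/\SO(3)) = 1$.
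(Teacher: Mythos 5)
Your proof is correct and takes essentially the same route as the paper: the paper likewise observes that $\delta = \tfrac{7}{18}\, I_{3}/I_{2}$ is a rational invariant on the stratum and then applies Theorem~\ref{thm:Key_Thm_Separating_Set} ``following the same proof as for proposition~\ref{prop:SepHcubic}'', using the monomial expressions~\eqref{eq:JkIT} for the $I_{k}$. Your extra remarks on the single-valuedness and sign of $\delta$ simply spell out what the paper leaves implicit.
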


% ----------------------------------------------------------------
\subsection{Tetragonal stratum}
% ----------------------------------------------------------------

A fourth-order tensor $\bH\in \HH^{4}(\RR^{3})$ is at least tetragonal if and only if there exists a rotation $g\in \SO(3)$ such that $\bH=g\star \bH_{\DD_{4}}$ where $\bH_{\DD_{4}}$ has the following Kelvin representation,
\begin{equation}\label{eq:tetragonal_case}
  [\bH_{\DD_{4}}]= \left(
  \begin{array}{cccccc}
      3\,\delta  -\sigma & \sigma+\delta     & -4\,\delta & 0          & 0          & 0                   \\
      \sigma+\delta      & 3\,\delta -\sigma & -4\,\delta & 0          & 0          & 0                   \\
      -4\,\delta         & -4\,\delta        & 8\,\delta  & 0          & 0          & 0                   \\
      0                  & 0                 & 0          & -8\,\delta & 0          & 0                   \\
      0                  & 0                 & 0          & 0          & -8\,\delta & 0                   \\
      0                  & 0                 & 0          & 0          & 0          & 2\,\sigma+2\,\delta
    \end{array}
  \right),
\end{equation}
and $\bH_{\DD_{4}}$ is tetragonal if and only if
$\sigma\ne 0$ and $\sigma^{2}-25\delta^{2}\ne 0$.
Recall here the following bifurcation conditions~\cite{AKP2014}: $(i)$ $\sigma=0$ implies transverse isotropy,
$(ii)$ $\sigma^{2}-25\delta^{2}= 0$ implies cubic symmetry, and $(iii)$ $\sigma = 0$ and $\delta = 0$ imply isotropy.
The evaluation of the invariants~\eqref{eq:Ikinvariants} on \eqref{eq:tetragonal_case} is
\begin{equation}\label{eq:Jktetra}
  \begin{aligned}
    I_{2} & = 8 (35\delta^{2} + \sigma^{2}),                 & I_{3} & = 48\delta (15\delta^{2} +\sigma^{2}),           & I_{4}  & = \frac{32}{3} (25\delta^{2} -\sigma^{2})^{2},
    \\
    I_{5} & = 64\delta (25\delta^{2} - \sigma^{2})^{2},      & I_{6} & = \frac{128}{9} (25\delta^{2} - \sigma^{2})^{3}, & I_{7}  & = \frac{256}{3}\delta (25\delta^{2} - \sigma^{2})^{3},
    \\
    I_{8} & = 512\delta^{2} (25\delta^{2} - \sigma^{2})^{3}, & I_{9} & =3072\delta^{3} (25\delta^{2} - \sigma^{2})^{3}, & I_{10} & = 2048\delta^{2} (25\delta^{2} - \sigma^{2})^{4}.
  \end{aligned}
\end{equation}
In accordance with remark~\ref{rem:J2K4}, $I_{4}\ne 0$ for all $\bH\in \strata{\DD_{4}}$.

\begin{prop}\label{prop:SepHtetra}
  A minimal functional basis for $\strata{\DD_{4}}$, \emph{i.e.}, for tetragonal fourth-order harmonic tensors $\bH\in \HH^{4}(\RR^{3})$, consists of the two rational invariants
  \begin{equation}\label{eq:coeff-tetragonal}
    \kappa_{1}:=\frac{I_{5}}{I_{4}},\qquad \kappa_{2}:=I_{2}.
  \end{equation}
\end{prop}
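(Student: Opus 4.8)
The plan is to apply theorem~\ref{thm:Key_Thm_Separating_Set} to $\SV = \HH^{4}(\RR^{3})$ and the stratum $\strata{\DD_{4}}$, exactly as in the proofs of propositions~\ref{prop:SepHcubic} and~\ref{prop:SepHisoT}. Since $\dim(\strata{\DD_{4}}/\SO(3)) = 2$, the theorem requires exactly two continuous $\SO(3)$-invariant functions $\kappa_{1}, \kappa_{2}$ through which every integrity basis element $I_{2}, \dotsc, I_{10}$ factors on the stratum; producing such functions simultaneously yields a separating set, a functional basis (via theorem~\ref{thm:sepimpliesfunc}), and minimality (via remark~\ref{rem:Minimality_and_Dimension}). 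The whole argument thus reduces to inverting the evaluation table~\eqref{eq:Jktetra} on the normal form~\eqref{eq:tetragonal_case}.

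First I would evaluate the candidate invariants on~\eqref{eq:tetragonal_case}. From~\eqref{eq:Jktetra}, $I_{4} = \tfrac{32}{3}(25\delta^{2} - \sigma^{2})^{2}$ and $I_{5} = 64\delta(25\delta^{2} - \sigma^{2})^{2}$, so on the stratum (where $I_{4} \ne 0$ by the note following~\eqref{eq:Jktetra}, since $\sigma^{2} - 25\delta^{2} \ne 0$) the ratio collapses to
\begin{equation*}
  \kappa_{1} = \frac{I_{5}}{I_{4}} = 6\delta.
\end{equation*}
Together with $\kappa_{2} = I_{2} = 8(35\delta^{2} + \sigma^{2})$ this lets me recover the two essential parameters as functions of $\kappa_{1}, \kappa_{2}$:
\begin{equation*}
  \delta = \frac{\kappa_{1}}{6}, \qquad \sigma^{2} = \frac{\kappa_{2}}{8} - \frac{35}{36}\,\kappa_{1}^{2}.
\end{equation*}
The key observation is then that \emph{every} entry of~\eqref{eq:Jktetra} is even in $\sigma$: each $I_{k}$ is a polynomial in $\delta$ and $\sigma^{2}$, the factors $48\delta$, $64\delta$, $\tfrac{256}{3}\delta$, $3072\delta^{3}$ being powers of $\delta$ only while $\sigma$ never appears alone. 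Hence each $I_{k}$ can be rewritten as $I_{k} = \widetilde{F}_{k}(\kappa_{1}, \kappa_{2})$ on $\strata{\DD_{4}}$, which is exactly the hypothesis of theorem~\ref{thm:Key_Thm_Separating_Set}; concluding then only requires collecting these rewritings.

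I expect the one genuinely delicate point to be conceptual rather than computational: the inversion recovers $\sigma^{2}$, not $\sigma$, so I must argue that this loss is harmless. This is precisely because the integrity basis separates orbits on all of $\HH^{4}(\RR^{3})$ and all nine invariants are even in $\sigma$; therefore the tensors with parameters $(\delta, \sigma)$ and $(\delta, -\sigma)$ lie on the same $\SO(3)$-orbit, and the orbit space $\strata{\DD_{4}}/\SO(3)$ is genuinely parametrized by $(\delta, \sigma^{2})$ --- a two-dimensional set, consistent with $\dim(\strata{\DD_{4}}/\SO(3)) = 2$. The well-definedness of $\kappa_{1} = I_{5}/I_{4}$ on the entire stratum, guaranteed by $I_{4} \ne 0$, is the only other thing to keep track of.
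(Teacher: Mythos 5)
Your proposal is correct and follows essentially the same route as the paper's proof: evaluate the integrity basis on the normal form~\eqref{eq:tetragonal_case}, recover $\delta = \kappa_{1}/6$ and $\sigma^{2} = \kappa_{2}/8 - \tfrac{35}{36}\kappa_{1}^{2}$, observe that every $I_{k}$ in~\eqref{eq:Jktetra} depends only on $\delta$ and $\sigma^{2}$, and conclude by theorem~\ref{thm:Key_Thm_Separating_Set} together with $\dim(\strata{\DD_{4}}/\SO(3)) = 2$. Your closing discussion of the sign ambiguity in $\sigma$ is accurate but not needed, since the hypothesis of theorem~\ref{thm:Key_Thm_Separating_Set} only requires that each $I_{k}$ factor through $(\kappa_{1},\kappa_{2})$ on the stratum, which the evenness in $\sigma$ already guarantees.
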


\begin{proof}
  For each $\bH\in \strata{\DD_{4}}$, we deduce by~\eqref{eq:Jktetra} that
  \begin{equation*}
    \delta = \frac{1}{6}\frac{I_{5}}{I_{4}} = \frac{1}{6}\kappa_{1},\qquad \sigma^{2} = \frac{1}{8} I_{2} - 35 \delta^{2}
    = \frac{1}{8} \kappa_{2} - \frac{35}{36}{\kappa_{1}}^{2}.
  \end{equation*}
  Since each $I_{k}$ ($2 \le k \le 10$) depends only on $\delta$ and $\sigma^{2}$, we deduce that they are functions of $\kappa_{1},\kappa_{2}$, and the proposition follows by theorem~\ref{thm:Key_Thm_Separating_Set}, since $\dim \strata{\DD_{4}}/\SO(3) = 2$~\eqref{eq:dimElastrata}.
\end{proof}

\begin{rem}\label{I2I3_nor_I3I4_tetra}
  Neither $\set{I_{2},I_{3}}$, nor $\set{I_{3},I_{4}}$ are separating sets. Indeed,
  \begin{itemize}
    \item for both tetragonal tensors $(\delta=1, \sigma=\sqrt{60})$ and $(\delta=3/2, \sigma=\sqrt{65/4})$, we have $I_{2}=760$ and $I_{3}=3600$, but they have different values for $I_{4}$.

    \item for both tetragonal tensors $(\delta=1, \sigma=\sqrt{63})$ and $(\delta=3/2, \sigma=\sqrt{73/4})$, we have $I_{3}=3744$ and $I_{4}=46208/3$, but they have different values for $I_{2}$.
  \end{itemize}
\end{rem}

% ----------------------------------------------------------------
\subsection{Trigonal stratum}
% ----------------------------------------------------------------

A fourth-order tensor $\bH\in \HH^{4}(\RR^{3})$ is at least trigonal if and only if there exists a rotation $g\in \SO(3)$ such that $\bH=g\star \bH_{\DD_{3}}$ where $\bH_{\DD_{3}}$ has the following Kelvin representation,
\begin{equation}\label{eq:trigonal_case}
  [\bH_{\DD_{3}}]= \left(
  \begin{array}{cccccc}
      3\,\delta       & \delta         & -4\,\delta & -\sqrt{2}\sigma & 0          & 0          \\
      \delta          & 3\,\delta      & -4\,\delta & \sqrt{2}\sigma  & 0          & 0          \\
      -4\,\delta      & -4\,\delta     & 8\,\delta  & 0               & 0          & 0          \\
      -\sqrt{2}\sigma & \sqrt{2}\sigma & 0          & -8\,\delta      & 0          & 0          \\
      0               & 0              & 0          & 0               & -8\,\delta & -2\,\sigma \\
      0               & 0              & 0          & 0               & -2\,\sigma & 2\,\delta
    \end{array}
  \right),
\end{equation}
and $\bH_{\DD_{3}}$ is trigonal if and only if
$\sigma\ne 0$ and $\sigma^{2}-50\delta^{2}\ne 0$.
Recall also the bifurcation conditions~\cite{AKP2014}: $(i)$ $\sigma=0$ implies transverse isotropy, $(ii)$ $\sigma^{2}-50\delta^{2}=0$ implies cubic symmetry, and $(iii)$ $\sigma = 0$ and $\delta = 0$ imply isotropy.
The evaluation of the invariants~\eqref{eq:Ikinvariants} on~\eqref{eq:trigonal_case} is
\begin{equation}\label{eq:Jktrigo}
  \begin{aligned}
     & I_{2} = 8 (35\delta^{2} + 2\sigma^{2}),            &  & I_{3} =144\delta (5\delta^{2} -\sigma^{2}),           &  & I_{4} = \frac{8}{3} (50\delta^{2} -\sigma^{2})^{2},      \\
     & I_{5}=16\delta (50\delta^{2} -\sigma^{2})^{2},     &  & I_{6} =  \frac{16}{9} (50\delta^{2} -\sigma^{2})^{3}, &  & I_{7}=\frac{32}{3}\delta (50\delta^{2} -\sigma^{2})^{3}, \\
     & I_{8}=64\delta^{2} (50\delta^{2} -\sigma^{2})^{3}, &  & I_{9}=384\delta^{3} (50\delta^{2} -\sigma^{2})^{3}    &  & I_{10}=  128\delta^{2} (50\delta^{2} -\sigma^{2})^{4}.
  \end{aligned}
\end{equation}
As for the tetragonal case, we have $I_{4}\ne 0$ for all $\bH\in \strata{\DD_{3}}$. Now, following the same proof as the one of proposition~\ref{prop:SepHtetra}, we get:

\begin{prop}\label{prop:SepHtrigo}
  A minimal functional basis for $\strata{\DD_{3}}$, \emph{i.e.}, for trigonal harmonic fourth-order tensors $\bH\in \HH^{4}(\RR^{3})$, consists of the two rational  invariants
  \begin{equation}\label{eq:coeff-trigonal}
    \kappa_{1}:=\frac{I_{5}}{I_{4}},\qquad \kappa_{2}:=I_{2}.
  \end{equation}
\end{prop}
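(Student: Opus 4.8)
The plan is to mirror exactly the argument used for the tetragonal case in Proposition~\ref{prop:SepHtetra}, invoking Theorem~\ref{thm:Key_Thm_Separating_Set} with $\SV = \HH^{4}(\RR^{3})$ and the trigonal stratum $\strata{\DD_{3}}$, whose orbit space has dimension $2$. The integrity basis is $\set{I_{2},\dotsc,I_{10}}$, so I must exhibit two continuous $\SO(3)$-invariant functions $\kappa_{1},\kappa_{2}$ on $\strata{\DD_{3}}$ such that each $I_{k}$ is a function of $\kappa_{1},\kappa_{2}$ alone. The candidates are $\kappa_{1}:=I_{5}/I_{4}$ and $\kappa_{2}:=I_{2}$, and the key observation is that the explicit evaluations in~\eqref{eq:Jktrigo} express every $I_{k}$ as a polynomial in the two parameters $\delta$ and $\sigma^{2}$.

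First I would recover $\delta$ and $\sigma^{2}$ from $\kappa_{1}$ and $\kappa_{2}$. From~\eqref{eq:Jktrigo}, $I_{4}=\tfrac{8}{3}(50\delta^{2}-\sigma^{2})^{2}$ and $I_{5}=16\delta(50\delta^{2}-\sigma^{2})^{2}$, so on the trigonal stratum, where $I_{4}\ne 0$ (equivalently $\sigma^{2}-50\delta^{2}\ne 0$), the ratio simplifies to
\begin{equation*}
  \kappa_{1}=\frac{I_{5}}{I_{4}}=\frac{16\delta(50\delta^{2}-\sigma^{2})^{2}}{\tfrac{8}{3}(50\delta^{2}-\sigma^{2})^{2}}=6\delta,
\end{equation*}
giving $\delta=\tfrac{1}{6}\kappa_{1}$. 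Then from $I_{2}=8(35\delta^{2}+2\sigma^{2})$ I solve
\begin{equation*}
  \sigma^{2}=\frac{1}{16}I_{2}-\frac{35}{2}\delta^{2}=\frac{1}{16}\kappa_{2}-\frac{35}{72}{\kappa_{1}}^{2}.
\end{equation*}
Thus both $\delta$ and $\sigma^{2}$ are expressed as explicit functions of $(\kappa_{1},\kappa_{2})$.

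Since each $I_{k}$ in~\eqref{eq:Jktrigo} is a polynomial in $\delta$ and $\sigma^{2}$ (note that $I_{3},I_{5},I_{7},I_{9}$ carry odd powers of $\delta$, but these are recovered from $\delta=\tfrac{1}{6}\kappa_{1}$, not from $\sigma$, so no sign ambiguity arises), I conclude that each $I_{k}=F_{k}(\kappa_{1},\kappa_{2})$ for suitable functions $F_{k}$. The hypotheses of Theorem~\ref{thm:Key_Thm_Separating_Set} are therefore met with $d=2$, which yields that $\set{\kappa_{1},\kappa_{2}}$ is simultaneously a minimal separating set of $\strata{\DD_{3}}/\SO(3)$ and a minimal functional basis of $\mathcal{F}(\strata{\DD_{3}})^{\SO(3)}$. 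The only point requiring care—and the main obstacle, though a mild one—is confirming that $I_{4}\ne 0$ throughout the stratum so that $\kappa_{1}$ is well-defined and continuous; this is precisely guaranteed by the trigonal defining condition $\sigma^{2}-50\delta^{2}\ne 0$, already noted in the text preceding the proposition.
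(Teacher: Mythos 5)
Your proposal is correct and follows exactly the paper's argument: the paper proves the trigonal case by "following the same proof as the one of proposition~\ref{prop:SepHtetra}", which is precisely what you have done — recovering $\delta=\frac{1}{6}\kappa_{1}$ and $\sigma^{2}=\frac{1}{16}\kappa_{2}-\frac{35}{72}\kappa_{1}^{2}$ from~\eqref{eq:Jktrigo}, noting every $I_{k}$ is a polynomial in $(\delta,\sigma^{2})$, and invoking Theorem~\ref{thm:Key_Thm_Separating_Set} with $\dim(\strata{\DD_{3}}/\SO(3))=2$. Your explicit checks (that $I_{4}\ne 0$ on the stratum and that odd powers of $\delta$ cause no sign ambiguity) simply spell out details the paper leaves implicit.
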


\begin{rem}\label{I2I3_nor_I3I4_trigo}
  Neither $\set{I_{2}, I_{3}}$ nor $\set{I_{3}, I_{4}}$ are separating sets. Indeed,
  \begin{itemize}
    \item for both trigonal tensors $(\delta=1, \sigma=\sqrt{715/8})$ and $(\delta=3/2, \sigma=\sqrt{135/2})$, we have $I_{2}=1710$ and $I_{3}=-12150$, but they have different values for $I_{4}$.

    \item for both trigonal tensors $(\delta=1, \sigma=\sqrt{371/4})$ and $(\delta=3/2, \sigma=\sqrt{279/4})$, we have $I_{3}=-12636$ and $I_{4}=9747/2$, but they have different values for $I_{2}$.
  \end{itemize}
\end{rem}

We point out here that each proposed minimal functional basis concerns an \emph{exact symmetry stratum}. The proposed functional basis $\set{ \kappa_{1}=I_{5}/I_{4}, \kappa_{2}=I_{2}}$ happens to be identical for the tetragonal and trigonal strata. A natural question then arises: does this set remain a functional basis for the union of strata $\strata{\DD_{3}} \cup \strata{\DD_{4}}$? The answer is no as detailed in the following remark.

\begin{rem}\label{rem:non-sep-closed-stratum}
  By proposition \ref{prop:SepHtetra}, two tetragonal harmonic fourth-order tensors having the same values for $\kappa_{1}$ and $\kappa_{2}$ are indeed in the same orbit (as a functional basis is a separating set). The same holds,  by proposition \ref{prop:SepHtrigo}, if one considers two trigonal harmonic fourth-order tensors having the same values for $\kappa_{1}$ and $\kappa_{2}$. There exists, however, trigonal tensors that have the same value for $\kappa_{1}$ and $\kappa_{2}$ as some tetragonal tensors. Since, they are not on the same orbit as they do not belong to the same symmetry class, and the set $\set{\kappa_{1},\kappa_{2}}$ is not a functional basis for $\strata{\DD_{3}}\cup \strata{\DD_{4}}$.
\end{rem}

% ----------------------------------------------------------------
\subsection{Orthotropic stratum}
% ----------------------------------------------------------------

A fourth-order tensor $\bH\in \HH^{4}(\RR^{3})$ is at least orthotropic if and only if there exists a rotation $g\in \SO(3)$ such that $\bH=g\star \bH_{\DD_{2}}$ where $\bH_{\DD_{2}}$ has the following Kelvin representation~\cite{OKDD2018},
\begin{equation}\label{eq:ortho_case}
  [\bH_{\DD_{2}}]= \left(
  \begin{array}{cccccc}
      \lambda_{2}+\lambda_{3} & -\lambda_{3}            & -\lambda_{2}\           & 0             & 0             & 0             \\
      -\lambda_{3}            & \lambda_{3}+\lambda_{1} & -\lambda_{1}            & 0             & 0             & 0             \\
      -\lambda_{2}            & -\lambda_{1}            & \lambda_{1}+\lambda_{2} & 0             & 0             & 0             \\
      0                       & 0                       & 0                       & -2\lambda_{1} & 0             & 0             \\
      0                       & 0                       & 0                       & 0             & -2\lambda_{2} & 0             \\
      0                       & 0                       & 0                       & 0             & 0             & -2\lambda_{3}
    \end{array}
  \right),
\end{equation}
and $\bH_{\DD_{2}}$ is orthotropic if and only if $\lambda_{1},\lambda_{2},\lambda_{3}$ are all distinct. In fact, setting
\begin{equation*}
  \Delta:=(\lambda_{1}-\lambda_{2})(\lambda_{2}-\lambda_{3})(\lambda_{1}-\lambda_{3}),
\end{equation*}
we have by direct evaluation of the invariant $\norm{\tr(\bH\times \bd_{2})}^{2}$ on~\eqref{eq:ortho_case}:
\begin{equation}\label{eq:TrHd2_et_Disc}
  \norm{\tr(\bH\times \bd_{2})}^{2} = \frac{6}{25}\Delta^{2} .
\end{equation}

The evaluation of the integrity basis $\set{I_{2},\dotsc,I_{10}}$ of $\bH$ on~\eqref{eq:ortho_case} can be expressed polynomially using the elementary symmetric functions \cite[section 5.5]{AKP2014}
\begin{equation*}
  \sigma_{1}:=\lambda_{1}+\lambda_{2}+\lambda_{3}, \qquad
  \sigma_{2}:=\lambda_{1}\lambda_{2}+\lambda_{2}\lambda_{3}+\lambda_{2}\lambda_{3}, \qquad
  \sigma_{3}:=\lambda_{1}\lambda_{2}\lambda_{3}.
\end{equation*}
Conversely, the $\sigma_{i}$ can be expressed rationally in the $I_{k}$.

\begin{prop}\label{prop:SepHortho}
  A minimal functional basis for $\strata{\DD_{2}}$, \emph{i.e.}, for orthotropic harmonic fourth-order tensors $\bH\in \HH^{4}(\RR^{3})$, consists of the three rational invariants
  \begin{equation}\label{eq:kappak-ortho}
    \begin{split}
      \sigma_{1} &:=\displaystyle \frac{1}{96}
      \frac{6 I_{7}+3 I_{3} I_{4}-2 I_{2} I_{5}}{\Delta^2},
      \\
      \sigma_{2} &:= \displaystyle\frac{4}{7}\sigma_{1}^{\, 2}-\frac{1}{14}I_{2},
      \\
      \sigma_{3}& :=\displaystyle  \frac{1}{7} \sigma_{1}^{\, 3}-\frac{1}{56}\sigma_{1}I_{2}-\frac{1}{24}I_{3} ,
    \end{split}
  \end{equation}
  where $\Delta^{2}= \frac{1}{1296}\left(2 {I_{2}}^{3}-60 {I_{3}}^{2}-9 I_{2} I_{4} + 18 I_{6}\right)\ne 0$,
\end{prop}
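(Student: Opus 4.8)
The plan is to apply Theorem~\ref{thm:Key_Thm_Separating_Set} to $\SV = \HH^{4}(\RR^{3})$, the integrity basis $\mathcal{B} = \set{I_{2}, \dotsc, I_{10}}$, and the orthotropic stratum $\strata{\DD_{2}}$, whose orbit space has dimension $3$. The task then reduces to exhibiting three $\SO(3)$-invariant continuous functions on $\strata{\DD_{2}}$ through which every $I_{k}$ factors; we take these to be $\sigma_{1}, \sigma_{2}, \sigma_{3}$. The two things to establish are, first, that each $I_{k}$ is a polynomial in $\sigma_{1}, \sigma_{2}, \sigma_{3}$ and, second, that $\sigma_{1}, \sigma_{2}, \sigma_{3}$ are recovered as the rational invariants \eqref{eq:kappak-ortho}.

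First I would evaluate the invariants on the normal form \eqref{eq:ortho_case}. Since each $I_{k}$ is $\SO(3)$-invariant, its value on $\bH_{\DD_{2}}$ is a polynomial in $\lambda_{1}, \lambda_{2}, \lambda_{3}$. The three coordinate axes are permuted by the rotations of the octahedral group normalizing $\DD_{2}$, and these permute the $\lambda_{i}$; hence each $I_{k}$ evaluates to a \emph{symmetric} polynomial in $\lambda_{1}, \lambda_{2}, \lambda_{3}$, that is, a polynomial $F_{k}(\sigma_{1}, \sigma_{2}, \sigma_{3})$ in the elementary symmetric functions. This is exactly the factorization $I_{k} = F_{k}(\sigma_{1}, \sigma_{2}, \sigma_{3})$ on $\strata{\DD_{2}}$ demanded by Theorem~\ref{thm:Key_Thm_Separating_Set}.

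The core computation is to invert this relation on the stratum. A degree count ($\bH$ linear in the $\lambda_{i}$, $\bd_{2}$ quadratic, $\bd_{3}$ cubic) forces $I_{2} = 8\sigma_{1}^{2} - 14\sigma_{2}$ and $I_{3} = 6\sigma_{1}\sigma_{2} - 24\sigma_{3}$, which already give the stated formulas for $\sigma_{2}$ and $\sigma_{3}$ once $\sigma_{1}$ is known. The crucial point is that $\sigma_{1}$ cannot be extracted polynomially from $\set{I_{2}, I_{3}}$ alone; instead I would establish the two polynomial identities
\begin{equation*}
  2 I_{2}^{3} - 60 I_{3}^{2} - 9 I_{2} I_{4} + 18 I_{6} = 1296\,\Delta^{2},
  \qquad
  6 I_{7} + 3 I_{3} I_{4} - 2 I_{2} I_{5} = 96\,\sigma_{1}\,\Delta^{2},
\end{equation*}
valid as identities in $\lambda_{1}, \lambda_{2}, \lambda_{3}$ (together with \eqref{eq:TrHd2_et_Disc}, which exhibits $\Delta^{2}$ as a covariant norm). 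On the orthotropic stratum the $\lambda_{i}$ are pairwise distinct, so $\Delta \ne 0$, and the second identity may be divided by $\Delta^{2}$ to yield $\sigma_{1}$ as the rational invariant of \eqref{eq:kappak-ortho}; back-substitution then produces $\sigma_{2}$ and $\sigma_{3}$.

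With $\sigma_{1}, \sigma_{2}, \sigma_{3}$ expressed rationally in the $I_{k}$ (hence continuously, since $\Delta^{2} \ne 0$ on $\strata{\DD_{2}}$), and each $I_{k}$ expressed polynomially in $\sigma_{1}, \sigma_{2}, \sigma_{3}$, Theorem~\ref{thm:Key_Thm_Separating_Set} applies with $d = \dim(\strata{\DD_{2}}/\SO(3)) = 3$ and delivers both the separating property and the minimality (the latter through Remark~\ref{rem:Minimality_and_Dimension}). I expect the main obstacle to be the explicit verification of the two displayed identities: expanding $I_{4}, I_{5}, I_{6}, I_{7}$ as symmetric polynomials of degree $4, 5, 6, 7$ in $\lambda_{1}, \lambda_{2}, \lambda_{3}$ and matching them against $\Delta^{2}$ and $\sigma_{1}\Delta^{2}$ is a sizeable algebraic computation, best confirmed with a computer algebra system.
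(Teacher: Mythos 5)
Your proposal is correct and follows essentially the same route as the paper's own proof: evaluate the integrity basis on the normal form \eqref{eq:ortho_case}, establish the two identities $6I_{7}+3I_{3}I_{4}-2I_{2}I_{5}=96\sigma_{1}\Delta^{2}$ and $2I_{2}^{3}-60I_{3}^{2}-9I_{2}I_{4}+18I_{6}=1296\Delta^{2}$, invert to express $\sigma_{1},\sigma_{2},\sigma_{3}$ rationally in the $I_{k}$ (using $\Delta\ne 0$ on the stratum), note that every $I_{k}$ is a symmetric polynomial in the $\lambda_{i}$ and hence a polynomial in the $\sigma_{i}$, and conclude by Theorem~\ref{thm:Key_Thm_Separating_Set} with $d=3$. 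Your added justification that the evaluated $I_{k}$ are symmetric in $\lambda_{1},\lambda_{2},\lambda_{3}$ (via octahedral rotations permuting the axes) is a nice self-contained replacement for the paper's citation of~\cite{AKP2014}, and your explicit relations $I_{2}=8\sigma_{1}^{2}-14\sigma_{2}$, $I_{3}=6\sigma_{1}\sigma_{2}-24\sigma_{3}$ correctly reproduce the stated formulas for $\sigma_{2}$ and $\sigma_{3}$.
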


\begin{proof}
  For each $\bH\in \strata{\DD_{2}}$, we can write $\bH=g\star \bH_{\DD_{2}}$ where $\bH_{\DD_{2}}$ is given by~\eqref{eq:ortho_case}. Now, a direct evaluation on the normal form \eqref{eq:ortho_case} leads to
  \begin{equation*}
    6 I_{7}+3 I_{3} I_{4}-2 I_{2} I_{5}=96\sigma_{1}\Delta^{2},\quad
    2 I_{2}^{\, 3}-60 I_{3}^{\,2}-9 I_{2} I_{4}+18 I_{6}=1296\Delta^{2}.
  \end{equation*}
  Hence, we obtain the first equation of~\eqref{eq:kappak-ortho}, while the others are obtained in the same way. Finally, each invariant $I_{2},\dotsc,I_{10}$ is a polynomial function of $\sigma_{1},\sigma_{2},\sigma_{3}$~\cite{AKP2014}, and, since $\dim\strata{\DD_{2}}/\SO(3) = 3$, the conclusion follows by theorem~\ref{thm:Key_Thm_Separating_Set}.
\end{proof}

% ----------------------------------------------------------------
\section{Functional bases on symmetry strata of elasticity tensors}
\label{sec:Ela}
% ----------------------------------------------------------------

We finally address the problem of the determination of minimal functional bases for the symmetry strata of the elasticity tensor (except for the orthotropic $\strata{\DD_{2}}$, the monoclinic $\strata{\ZZ_{2}}$ and the triclinic $\strata{\triv}$ strata, which will be investigated in a future work).
The isotropic case is trivial, a minimal functional basis for the isotropic stratum $\strata{\SO(3)}$ consists of the two Lamé coefficients. The cubic case is straightforward and treated in~section \ref{subsec:Ela-cubic}. In order to derive our results for the trigonal $\strata{\DD_{3}}$, tetragonal $\strata{\DD_{4}}$ and transversely isotropic $\strata{\OO(2)}$ strata, we shall define in~section \ref{subsec:ti-Ela-cov} a non vanishing second-order covariant $\bt=\bt(\bE)$ of $\bE$.

We recall the dimensions of the eight orbit spaces $\Sigma_{[H]}/\SO(3)$ (see~\cite{AKP2014}),
\begin{equation}\label{eq:dimElastrata}
  \begin{aligned}
     & \dim(\Sigma_{[\triv]}/\SO(3))    = 18, &  & \dim(\Sigma_{[\ZZ_{2}]}/\SO(3))  = 12, &  & \dim(\Sigma_{[\DD_{2}]}/\SO(3))  = 9,
    \\
     & \dim(\Sigma_{[\DD_{4}]}/\SO(3)) = 6,   &  & \dim(\Sigma_{[\DD_{3}]}/\SO(3)) = 6,   &  & \dim(\Sigma_{[\OO(2)]}/\SO(3))   = 5,
    \\
     & \dim(\Sigma_{[\octa]}/\SO(3))    = 3,  &  & \dim(\Sigma_{[\SO(3)]}/\SO(3))   = 2.  &  &
  \end{aligned}
\end{equation}

% ----------------------------------------------------------------
\subsection{Elasticity cubic stratum}
\label{subsec:Ela-cubic}
% ----------------------------------------------------------------

By theorem~\ref{thm:main}, an elasticity tensor
\begin{equation*}
  \bE=(\tr \bd, \tr \bv, \bd^{\prime}, \bv^{\prime}, \bH) \in \Ela
\end{equation*}
is cubic if and only if $\bd^{\prime}=\bv^{\prime}=\bd_{2}^{\prime}=0$ and $I_{2}(\bH)=\tr \bd_{2}\ne 0$ (meaning that $\bH\in \HH^{4}(\RR^{3})$ is cubic). Now, by proposition~\ref{prop:SepHcubic} and since $\dim(\Sigma_{[\octa]}/\SO(3))=3$, we have the following result.

\begin{thm}\label{thm:ECubic}
  Let $\bE=(\tr \bd, \tr \bv, 0, 0, \bH)$ be a cubic elasticity tensor. A minimal functional basis for $\strata{\octa}$ consists of the three rational invariants
  \begin{equation}\label{eq:baseEcubic}
    \kappa_{1}:=\tr \bd, \quad  \kappa_{2}:=\tr \bv, \quad
    \kappa_{3}:= \frac{I_{3}}{I_{2}}.
  \end{equation}
\end{thm}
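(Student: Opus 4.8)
The plan is to invoke Theorem~\ref{thm:Key_Thm_Separating_Set} directly, exactly as in the proof of Proposition~\ref{prop:SepHcubic}, but now for the full elasticity stratum $\strata{\octa}$ rather than just the harmonic slot. The guiding principle is that the harmonic decomposition $\bE=(\tr\bd,\tr\bv,\bd',\bv',\bH)$ is equivariant, so once we know (by Theorem~\ref{thm:main}(2)) that a cubic $\bE$ has $\bd'=\bv'=\bd_2'=0$, the only surviving covariant information lives in the two scalars $\tr\bd,\tr\bv$ and in the cubic harmonic tensor $\bH$. Since $\dim(\strata{\octa}/\SO(3))=3$ by~\eqref{eq:dimElastrata}, and since the two Lamé-type scalars plus one harmonic parameter give three candidate invariants, the count matches and minimality will follow automatically from Remark~\ref{rem:Minimality_and_Dimension}.

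First I would fix an integrity basis $\mathcal{B}$ of $\RR[\Ela]^{\SO(3)}$; by the theory recalled in Section~\ref{sec:FBTS} such a finite basis exists, and every generator is an $\SO(3)$-invariant polynomial in the components of $\bE$. Next I would express each generator of $\mathcal{B}$ as a polynomial in the invariants of the harmonic components, i.e. in $\tr\bd$, $\tr\bv$, the invariants of $\bd'$ and $\bv'$, joint invariants coupling these deviators with $\bH$, and the invariants $I_2,\dotsc,I_{10}$ of $\bH$ itself. This is the standard consequence of the harmonic decomposition being equivariant: any $\SO(3)$-invariant of $\bE$ is an isotropic function of its harmonic pieces. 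On the cubic stratum, the characterization in Theorem~\ref{thm:main}(2) forces $\bd'=\bv'=\bd_2'=0$, so every invariant involving $\bd'$ or $\bv'$ vanishes, and every joint invariant coupling $\bH$ with $\bd'$ or $\bv'$ vanishes as well. Thus, restricted to $\strata{\octa}$, each generator of $\mathcal{B}$ reduces to a polynomial in $\tr\bd$, $\tr\bv$, and the invariants $I_2,\dotsc,I_{10}$ of $\bH$.

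Now I would use that $\bH$ is itself cubic: by~\eqref{eq:Jkcubic} we have $I_k(\bH)=0$ for $4\le k\le 10$, while $I_2(\bH)=480\delta^2$ and $I_3(\bH)=1920\delta^3$ with $\kappa_3:=I_3/I_2=4\delta\ne0$. Hence on $\strata{\octa}$ every harmonic invariant is a function of the single scalar $\kappa_3$ (indeed $I_2=30\kappa_3^2$, $I_3=30\kappa_3^3$, and the rest vanish), exactly as established in Proposition~\ref{prop:SepHcubic}. Combining this with $\kappa_1=\tr\bd$ and $\kappa_2=\tr\bv$, each generator $I_k$ of $\mathcal{B}$ becomes a function $F_k(\kappa_1,\kappa_2,\kappa_3)$ evaluated on $\strata{\octa}$. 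This verifies the hypothesis of Theorem~\ref{thm:Key_Thm_Separating_Set} with $d=3$ continuous (in fact rational) invariants $\kappa_1,\kappa_2,\kappa_3$, so $\set{\kappa_1,\kappa_2,\kappa_3}$ is a separating set and functional basis of $\strata{\octa}/\SO(3)$, and its minimality follows from $d=\dim(\strata{\octa}/\SO(3))=3$.

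The main obstacle I anticipate is the bookkeeping in the second step: justifying cleanly that \emph{every} generator of a chosen integrity basis of $\RR[\Ela]^{\SO(3)}$ restricts, on the cubic stratum, to a function of $\tr\bd$, $\tr\bv$, and the invariants of $\bH$ alone. One must be careful that no generator secretly encodes independent information through a covariant that survives when $\bd'=\bv'=\bd_2'=0$ but $\bH$ is cubic. The clean way around this is to avoid listing the $294$ generators explicitly and instead argue structurally: on $\strata{\octa}$ the tensor $\bE$ is, up to rotation, determined by $(\tr\bd,\tr\bv,\bH_{\octa})$ with $\bH_{\octa}$ of the normal form~\eqref{eq:cubic_case}, so any invariant of $\bE$ is a function of $\tr\bd$, $\tr\bv$, and an invariant of $\bH_{\octa}$, the latter being a function of $\delta$ (equivalently $\kappa_3$) by Proposition~\ref{prop:SepHcubic}. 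This reduces the verification to the already-proven harmonic case and sidesteps the combinatorial explosion.
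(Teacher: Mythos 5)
Your proof is correct and, in its final ``structural'' form, is exactly the paper's argument: by theorem~\ref{thm:main}(2) a cubic $\bE$ reduces (equivariantly) to $(\tr \bd, \tr \bv, \bH)$ with $\bH$ a cubic harmonic tensor, proposition~\ref{prop:SepHcubic} shows $\kappa_{3}=I_{3}/I_{2}$ determines the orbit of $\bH$, and minimality follows from $\dim(\strata{\octa}/\SO(3))=3$. Your initial detour through an explicit integrity basis of $\RR[\Ela]^{\SO(3)}$ and theorem~\ref{thm:Key_Thm_Separating_Set} is harmless but unnecessary scaffolding, as you yourself note: once the normal-form argument parametrizes the orbits in $\strata{\octa}$ by $(\kappa_{1},\kappa_{2},\kappa_{3})$, the separating property is already established, and theorem~\ref{thm:sepimpliesfunc} together with remark~\ref{rem:Minimality_and_Dimension} finishes the proof.
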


% ----------------------------------------------------------------
\subsection{A transversely isotropic second-order covariant}
\label{subsec:ti-Ela-cov}
% ----------------------------------------------------------------

The goal, here, is to build a symmetric second-order covariant of $\bE \in \Ela$ which is strictly transversely isotropic for all trigonal, tetragonal and transversely isotropic tensors. Observe that each symmetric second-order covariant, $\bt(\bE)$, is necessarily at least transversely isotropic since it inherits the symmetries of $\bE$ and since a second-order symmetric tensor can only be either orthotropic (three distinct eigenvalues), transversely isotropic (two distinct eigenvalues) or isotropic (only one eigenvalue). It is however not obvious to find such a covariant which remains strictly transversely isotropic for all
\begin{equation*}
  \bE \in \strata{\DD_{3}} \cup \strata{\DD_{4}} \cup \strata{\OO(2)}.
\end{equation*}
To build such a covariant, we use corollary~\ref{cor:transversely-isotropic}, which forbids $(\bd^{\prime},\bv^{\prime},\bd_{2}^{\prime})$ to be isotropic, and denote by $\langle \nn \rangle$ the direction of transverse isotropy of the triplet $(\bd^{\prime},\bv^{\prime},\bd_{2}^{\prime})$. By proposition~\ref{prop:devaIT}, with $\norm{\nn}=1$, $\norm{(\nn \otimes \nn)'}=\sqrt{\frac{2}{3}}$, we get thus
\begin{equation*}
  \bd^{\prime}= \pm\sqrt{\frac{3}{2}} \norm{\bd^{\prime}} (\nn \otimes \nn)',
  \quad
  \bv^{\prime}=\pm \sqrt{\frac{3}{2}} \norm{\bv^{\prime}}  (\nn \otimes \nn)',
  \quad
  \bd_{2}^{\prime}=\pm \sqrt{\frac{3}{2}} \norm{\bd_{2}^{\prime}}  (\nn \otimes \nn)',
\end{equation*}
and
\begin{equation*}
  \norm{\bd^{\prime}}^{2}\, {\bd^{\prime}}^{2}+ \norm{\bv^{\prime}}^{2}\,{\bv^{\prime}}^{2}+{\bd_{2}^{\prime}}^{2}=\frac{3}{2}
  \left( \norm{\bd^{\prime}}^{4} +\norm{\bv^{\prime}}^{4} + \norm{\bd_{2}^{\prime}}^{2}\right)  (\nn \otimes \nn)^{\prime \, 2}.
\end{equation*}
The property $((\nn \otimes \nn)^{\prime \, 2})'=\frac{1}{3} (\nn \otimes \nn)'$ leads to
\begin{equation*}
  (\norm{\bd^{\prime}}^{2}\, {\bd^{\prime}}^{2}+ \norm{\bv^{\prime}}^{2}\,{\bv^{\prime}}^{2}+{\bd_{2}^{\prime}}^{2})'= \frac{1}{2}\left(\norm{\bd^{\prime}}^{4} +\norm{\bv^{\prime}}^{4}+\norm{\bd_{2}^{\prime}}^{2}\right)\, (\nn \otimes \nn)' \ne 0,
\end{equation*}
as $\norm{\bd^{\prime}}^{4} +\norm{\bv^{\prime}}^{4} + \norm{\bd_{2}^{\prime}}^{2}\ne 0$ over the entire union of strata $\strata{\OO(2)}\cup\strata{\DD_{3}} \cup\strata{\DD_{4}}$.

Therefore, this allows us to define the deviatoric second-order rational covariant
\begin{equation}\label{eq:t}
  \bt:=2 \frac{(\norm{\bd^{\prime}}^{2}\, {\bd^{\prime}}^{2}+ \norm{\bv^{\prime}}^{2}\,{\bv^{\prime}}^{2}+{\bd_{2}^{\prime}}^{2})'}{\norm{\bd^{\prime}}^{4} +\norm{\bv^{\prime}}^{4}+\norm{\bd_{2}^{\prime}}^{2}}\ne 0
\end{equation}
for every elasticity tensor which is either trigonal, tetragonal or transversely isotropic. It is normalized in such a way that
\begin{equation*}
  \bt= (\nn\otimes\nn)' , \qquad \norm{\nn}=1, \qquad \norm{\bt}=\sqrt{\frac{2}{3}},
\end{equation*}
and thus (since $\bt = \bt'$)
\begin{equation}\label{eq:dvd2t}
  \bd^{\prime}=\frac{3}{2}\left(\bd:\bt\right) \bt,
  \qquad
  \bv^{\prime}=\frac{3}{2}\left(\bv:\bt \right) \bt,
  \qquad
  \bd_{2}^{\prime}=\frac{3}{2}\left(\bd_{2}:\bt \right) \bt.
\end{equation}

% ----------------------------------------------------------------
\subsection{Elasticity transversely isotropic stratum}
% ----------------------------------------------------------------

By corollary~\ref{cor:transversely-isotropic}, if $\bE$ is transversely isotropic, then, the triplet $(\bd^{\prime}, \bv^{\prime}, \bd_{2}^{\prime})$ is transversely isotropic, and thus
\begin{equation*}
  \norm{\bd^{\prime}}^{4} + \norm{\bv^{\prime}}^{4} + \norm{\bd_{2}^{\prime}}^{2} \ne 0.
\end{equation*}

\begin{thm}\label{thm:EIsoT}
  Let $\bE=(\tr \bd, \tr \bv, \bd^{\prime}, \bv^{\prime}, \bH)\in\strata{\OO(2)}$ be a transversely isotropic elasticity tensor and
  \begin{equation}\label{eq:Iso_transvers_tensor_t}
    \bt= 2 \frac{(\norm{\bd^{\prime}}^{2}\, {\bd^{\prime}}^{2}+ \norm{\bv^{\prime}}^{2}\,{\bv^{\prime}}^{2}+{\bd_{2}^{\prime}}^{2})'}{\norm{\bd^{\prime}}^{4} +\norm{\bv^{\prime}}^{4}+\norm{\bd_{2}^{\prime}}^{2}}\in \HH^{2}(\RR^{3}).
  \end{equation}
  A minimal functional basis for $\strata{\OO(2)}$ consists of the five rational invariants
  \begin{equation}\label{eq:baseEisoT}
    \kappa_{1}:=\tr \bd, \quad  \kappa_{2}:=\tr \bv, \quad
    \kappa_{3}:=\bd:\bt,
    \quad
    \kappa_{4}:= \bv:\bt,
    \quad
    \kappa_{5}:=\bt:\bH:\bt.
  \end{equation}
\end{thm}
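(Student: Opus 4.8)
The plan is to reconstruct the whole $\SO(3)$-orbit of $\bE$ from the five invariants $\kappa_{1},\dotsc,\kappa_{5}$ and then to conclude by theorem~\ref{thm:Key_Thm_Separating_Set}, since $\dim(\strata{\OO(2)}/\SO(3))=5$ by~\eqref{eq:dimElastrata}. As $\bE\in\strata{\OO(2)}$, corollary~\ref{cor:transversely-isotropic} guarantees that $(\bd^{\prime},\bv^{\prime},\bd_{2}^{\prime})$ is transversely isotropic, so that the denominator $\norm{\bd^{\prime}}^{4}+\norm{\bv^{\prime}}^{4}+\norm{\bd_{2}^{\prime}}^{2}$ does not vanish and the covariant $\bt$ of~\eqref{eq:Iso_transvers_tensor_t} is well defined, non zero, and equal to $(\nn\otimes\nn)^{\prime}$, where $\nn$ is the common axis of transverse isotropy.

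First I would recover the two scalar components and the two deviators. The invariants $\tr\bd=\kappa_{1}$ and $\tr\bv=\kappa_{2}$ are immediate, while~\eqref{eq:dvd2t} gives $\bd^{\prime}=\tfrac{3}{2}\kappa_{3}\,\bt$ and $\bv^{\prime}=\tfrac{3}{2}\kappa_{4}\,\bt$. It then remains to pin down the harmonic part $\bH$. The key observation is that $\bH=\bH(\bE)$ is a covariant, so that $G_{\bE}\subseteq G_{\bH}$; hence $\bH$ inherits the $\OO(2)$ symmetry of $\bE$ with axis $\nn$. Since $\OO(2)$ is not contained in the cubic group $\octa$, the tensor $\bH$ cannot be cubic, so it is either transversely isotropic along $\nn$ or vanishes. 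In both cases proposition~\ref{prop:SepHisoT} applies: in a frame where $\nn=\ee_{3}$, the tensor $\bH$ takes the normal form~\eqref{eq:transversly_{i}sotropic_case}, depending on the single scalar $\delta$.

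The crucial step is then to check that $\kappa_{5}$ carries exactly this last degree of freedom. Evaluating $\bt:\bH:\bt$ on the normal form, with $\bt=(\ee_{3}\otimes\ee_{3})^{\prime}=\mathrm{diag}(-\tfrac{1}{3},-\tfrac{1}{3},\tfrac{2}{3})$ in Kelvin representation, I expect to find $\bt:\bH:\bt=8\delta$, whence $\delta=\kappa_{5}/8$. Consequently all the harmonic components of $\bE$ are determined by $(\kappa_{1},\dotsc,\kappa_{5})$ together with $\nn$; rotating $\nn$ onto $\ee_{3}$ brings $\bE$ to a normal form depending only on $(\kappa_{1},\dotsc,\kappa_{5})$. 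Therefore any generator $I_{k}$ of an integrity basis of $\RR[\Ela]^{\SO(3)}$, being an invariant and thus constant on orbits, is a function of $(\kappa_{1},\dotsc,\kappa_{5})$ on $\strata{\OO(2)}$, and theorem~\ref{thm:Key_Thm_Separating_Set} yields simultaneously that $\set{\kappa_{1},\dotsc,\kappa_{5}}$ separates the orbits and is minimal.

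The main obstacle I anticipate is the identification of the axis of $\bH$ with that of $\bt$: one must be certain that the transverse isotropy axis detected by the auxiliary covariant $\bt$ (built from $\bd^{\prime}$, $\bv^{\prime}$ and $\bd_{2}^{\prime}$) coincides with the axis of the harmonic part $\bH$, even in the degenerate situations where $\bd^{\prime}=\bv^{\prime}=0$. The inclusion $G_{\bE}\subseteq G_{\bH}$ settles this cleanly and is the conceptual heart of the argument; the remaining work — the explicit evaluation $\bt:\bH:\bt=8\delta$ — is only a short Kelvin-matrix computation.
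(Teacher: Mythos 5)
Your proof is correct, but it recovers the harmonic part $\bH$ by a genuinely different mechanism than the paper. The paper's proof is a direct orbit-to-orbit separation argument: for two tensors $\bE,\overline{\bE}\in\strata{\OO(2)}$ with equal invariants it chooses $g$ with $g\nn=\overline{\nn}$, transports $\bd$ and $\bv$ via \eqref{eq:dvd2t}, and transports $\bH$ using the intrinsic harmonic-square reconstruction formula $\bH=\tfrac{35}{8}(\bt:\bH:\bt)\,\bt\ast\bt$ of theorem~\ref{thm:ReconstIT}; minimality then comes from remark~\ref{rem:Minimality_and_Dimension}. You bypass that reconstruction formula entirely: covariance gives $G_{\bE}\subseteq G_{\bH}$, and since no conjugate of the finite group $\octa$ (nor of any class other than $[\OO(2)]$ and $[\SO(3)]$) contains a subgroup conjugate to $\OO(2)$, the tensor $\bH$ is either zero or transversely isotropic with the \emph{same} axis $\nn$ (one $\OO(2)$ subgroup cannot be properly contained in another); the Kelvin normal form \eqref{eq:transversly_{i}sotropic_case} then leaves the single parameter $\delta$, and your anticipated evaluation $\bt:\bH:\bt=H_{3333}=8\delta$ is indeed correct --- it is the same computation the paper performs inside the proof of lemma~\ref{lem:Couple_tetra_tri} --- so $\delta=\kappa_{5}/8$. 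Your conclusion through theorem~\ref{thm:Key_Thm_Separating_Set}, parametrizing the normal form by $(\kappa_{1},\dotsc,\kappa_{5})$ so that every element of an integrity basis of $\RR[\Ela]^{\SO(3)}$ factors through these invariants, is precisely the strategy the paper applies to the $\HH^{4}(\RR^{3})$ strata in~\autoref{sec:H4}, transplanted here to $\Ela$. The trade-off: the paper's route yields an equivariant, coordinate-free formula that is reused in the proofs of theorems~\ref{thm:Etetra} and~\ref{thm:Etrigo}, while yours is more elementary, avoiding the machinery of Appendix~\ref{sec:reconstruction-formula} at the price of a normal-form computation; and the one delicate point common to both arguments --- that the axis detected by $\bt$ coincides with the axis of $\bH$, even in degenerate cases such as $\bd^{\prime}=\bv^{\prime}=0$ --- is exactly what your inclusion $G_{\bE}\subseteq G_{\bH}$ settles, where the paper relies implicitly on corollary~\ref{cor:transversely-isotropic} and the construction of section~\ref{subsec:ti-Ela-cov}.
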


\begin{proof}
  Let $\bE$ and $\overline{\bE}$ be two transversely isotropic elasticity tensors with the same invariants $\kappa_{1},\dots,\kappa_{5}$. We have to show that there exists $g \in \SO(3)$, such that
  \begin{equation*}
    g \star \bd^{\prime} = \overline{\bd^{\prime}}, \qquad g \star \bv^{\prime} = \overline{\bv^{\prime}}, \qquad g \star \bH = \overline{\bH}.
  \end{equation*}
  Now, $\bt(\bE)$ being the covariant defined by~\eqref{eq:Iso_transvers_tensor_t}, we can write (see section~\ref{subsec:ti-Ela-cov})
  \begin{equation*}
    \bt=(\nn\otimes \nn)',\quad \overline{\bt}=(\overline{\nn}\otimes \overline{\nn})',
  \end{equation*}
  where $\nn$ and $\overline{\nn}$ are two unit vectors. Choose a rotation $g\in \SO(3)$ such that $g\nn=\overline{\nn}$. Then, we get $g\star \bt=\overline{\bt}$, and by~\eqref{eq:dvd2t} and proposition~\ref{prop:devaIT}
  \begin{equation*}
    \bd=\frac{\kappa_{1}}{3} \bq+\frac{3}{2}(\bd:\bt)\bt=\frac{\kappa_{1}}{3}\bq+\frac{3}{2}\kappa_{2}\,\bt\implies g\star \bd=\frac{\kappa_{1}}{3}\bq+\frac{3}{2}\kappa_{2}\, g\star\bt=\overline{\bd}.
  \end{equation*}
  The argumentation is the same for $\bv$ and $\overline{\bv}$. Finally, using the reconstruction formula~\eqref{eq:ReconstIT}, we have
  \begin{equation*}
    \bH=\frac{35}{8}\left(\bt:\bH:\bt\right)\bt\ast \bt=\frac{35}{8}\kappa_{5}\, \bt\ast \bt\implies g\star \bH=\frac{35}{8}\kappa_{5}\, (g\star\bt)\ast (g\star\bt)=\overline{\bH},
  \end{equation*}
  where the harmonic square $\bt \ast \bt$ is the fourth order harmonic part of $\bt \odot \bt=(\bt \otimes \bt)^s$,
  \begin{equation*}
    \bt \ast \bt:=\bt \odot \bt - \frac{4}{7}\, \bq \odot \bt^{2} + \frac{2}{35} \norm{\bt}^{2}\, \bq \odot \bq,
  \end{equation*}
  such as $g\star (\bt \ast \bt)= (g\star\bt)\ast (g\star\bt)$.
  This achieves the proof that $\set{\kappa_{1},\dotsc,\kappa_{5}}$ is a functional basis for $\Sigma_{[\OO(2)]}$ and the minimality follows by remark~\ref{rem:Minimality_and_Dimension}, since $\dim(\Sigma_{[\OO(2)]}/\SO(3)) = 5$.
\end{proof}

% ----------------------------------------------------------------
\subsection{Elasticity tetragonal stratum}
% ----------------------------------------------------------------

Given a tetragonal elasticity tensor
\begin{equation*}
  \bE=(\tr \bd, \tr \bv, \bd^{\prime}, \bv^{\prime}, \bH)\in \strata{\DD_{4}},
\end{equation*}
the triplet $(\bd^{\prime}, \bv^{\prime}, \bd_{2}^{\prime})$ is transversely isotropic (by corollary \ref{cor:transversely-isotropic}) and $\bH \in \HH^{4}(\RR^{3})$ is either cubic or tetragonal (it is neither isotropic, nor transversely isotropic~\cite{FV1996,OKDD2021}).

\begin{thm}\label{thm:Etetra}
  Let $\bE=(\tr \bd, \tr \bv, \bd^{\prime}, \bv^{\prime}, \bH)$ be a tetragonal elasticity tensor and
  \begin{equation*}
    \bt= 2 \frac{(\norm{\bd^{\prime}}^{2}\, {\bd^{\prime}}^{2}+ \norm{\bv^{\prime}}^{2}\,{\bv^{\prime}}^{2}+{\bd_{2}^{\prime}}^{2})'}{\norm{\bd^{\prime}}^{4} +\norm{\bv^{\prime}}^{4}+\norm{\bd_{2}^{\prime}}^{2}}.
  \end{equation*}
  A minimal functional basis for $\strata{\DD_{4}}$ consists of the six rational invariants
  \begin{align*}
     & \kappa_{1}:=\tr \bd,
     &                       & \kappa_{2}:=\tr \bv,
     &                       & \kappa_{3}:=\bd:\bt,
    \\
     & \kappa_{4}:= \bv:\bt,
     &                       & \kappa_{5}:=\bt:\bH:\bt,
     &                       &
    \kappa_{6}:=I_{2}.
  \end{align*}
\end{thm}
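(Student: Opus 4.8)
The plan is to follow the scheme of the proof of Theorem~\ref{thm:EIsoT}: I would establish the separating property by explicit reconstruction and then obtain minimality from Remark~\ref{rem:Minimality_and_Dimension}, since there are exactly six invariants and $\dim(\strata{\DD_4}/\SO(3)) = 6$ by~\eqref{eq:dimElastrata}. So let $\bE$ and $\overline{\bE}$ be two tetragonal elasticity tensors sharing the same values of $\kappa_1,\dotsc,\kappa_6$; the task is to produce $g \in \SO(3)$ with $g\star\bE = \overline{\bE}$. As in Theorem~\ref{thm:EIsoT}, the covariant $\bt$ is strictly transversely isotropic on $\strata{\DD_4}$ (Corollary~\ref{cor:transversely-isotropic}), so I may write $\bt = (\nn\otimes\nn)'$ and $\overline{\bt} = (\overline{\nn}\otimes\overline{\nn})'$ and choose a first rotation $g_0$ with $g_0\nn = \overline{\nn}$, whence $g_0\star\bt = \overline{\bt}$. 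The reconstruction formulas~\eqref{eq:dvd2t} then give $\bd = \tfrac{1}{3}\kappa_1\bq + \tfrac{3}{2}\kappa_3\,\bt$ and $\bv = \tfrac{1}{3}\kappa_2\bq + \tfrac{3}{2}\kappa_4\,\bt$, so that $g_0\star\bd = \overline{\bd}$ and $g_0\star\bv = \overline{\bv}$ exactly as in the transversely isotropic case.

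The genuinely new step concerns $\bH$, which is now cubic or tetragonal rather than transversely isotropic. First I would record that the axis $\langle\nn\rangle$ carried by $\bt$ is a four-fold axis of $\bH$: since $\bt$ is a covariant, $G_\bE \subseteq G_\bt = \OO(2)$ with axis $\nn$, so $\bE$ is tetragonal with four-fold axis $\nn$; as $G_\bE \subseteq G_\bH$ with $G_\bH\in\set{[\DD_4],[\octa]}$, the group $\DD_4$ about $\nn$ sits inside $G_\bH$, forcing $\nn$ to be the four-fold axis of $\bH$ (one of the cube's four-fold axes in the cubic case). Evaluating in a frame with $\nn = \ee_3$, a direct computation on the normal forms~\eqref{eq:tetragonal_case} and~\eqref{eq:cubic_case} yields $\kappa_5 = \bt:\bH:\bt = 8\delta$, while $\kappa_6 = I_2 = 8(35\delta^2 + \sigma^2)$ by~\eqref{eq:Jktetra} (consistently $\kappa_6 = 480\delta^2$ in the cubic case by~\eqref{eq:Jkcubic}). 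Hence the pair $(\kappa_5,\kappa_6)$ recovers $\delta$ and $\sigma^2$, so $g_0\star\bH$ and $\overline{\bH}$ are cubic/tetragonal harmonic tensors sharing the four-fold axis $\overline{\nn}$ and the same parameters $(\delta,\sigma^2)$.

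It remains to align these two fourth-order tensors, and here I expect the main subtlety to lie. I would argue that two cubic/tetragonal tensors with a common four-fold axis $\overline{\nn}$ and identical $(\delta,\sigma^2)$ lie in a single orbit of the rotation group $\SO(2)_{\overline{\nn}}$ about that axis: the residual freedom is purely azimuthal, the sign ambiguity $\sigma\mapsto-\sigma$ being realized by the quarter-turn about $\overline{\nn}$ that normalizes $\DD_4$ (it fixes $\delta$ and flips $\sigma$, in agreement with the fact, recorded in Proposition~\ref{prop:SepHtetra}, that only $\sigma^2$ enters the orbit invariants). I would thus select $h \in \SO(2)_{\overline{\nn}}$ with $h\star(g_0\star\bH) = \overline{\bH}$. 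Since every such $h$ fixes $\overline{\bt}$, $\overline{\bd}$ and $\overline{\bv}$ (all axisymmetric about $\overline{\nn}$), the rotation $g := h\,g_0$ satisfies $g\star\bt = \overline{\bt}$, $g\star\bd = \overline{\bd}$, $g\star\bv = \overline{\bv}$ and $g\star\bH = \overline{\bH}$, hence $g\star\bE = \overline{\bE}$. This shows that $\set{\kappa_1,\dotsc,\kappa_6}$ is a separating set, and minimality then follows from Remark~\ref{rem:Minimality_and_Dimension}. The two delicate points to check carefully are precisely the identification of $\langle\nn\rangle$ with the four-fold axis of $\bH$ and the transitivity of the azimuthal $\SO(2)_{\overline{\nn}}$-action on cubic/tetragonal harmonic tensors of fixed $(\delta,\sigma^2)$ about a fixed axis.
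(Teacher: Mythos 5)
Your proof is correct, and it reaches the paper's conclusion by a genuinely reorganized route. The paper aligns the two tensors in the opposite order: it first shows, using lemma~\ref{lem:Couple_tetra_tri}, that $\kappa_{5}=\bt:\bH:\bt$ and $\kappa_{6}=I_{2}$ determine all nine invariants $I_{2},\dotsc,I_{10}$ of~\eqref{eq:Ikinvariants}, so that the separating property of this integrity basis on $\HH^{4}(\RR^{3})$ directly produces $g\in\SO(3)$ with $g\star\bH=\overline{\bH}$; only then does it correct $g$ by an element of the symmetry group of $\bH$ (with the same tetragonal/cubic dichotomy you use, the cubic case resting on \cite[Lemma 8.9]{OKDD2018}) so that additionally $g\star\bt=\overline{\bt}$, before reconstructing $\bd$ and $\bv$ as in theorem~\ref{thm:EIsoT}. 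You instead align $\bt$, $\bd$, $\bv$ first and $\bH$ last, so your azimuthal-transitivity claim is exactly what substitutes for the paper's combination of lemma~\ref{lem:Couple_tetra_tri} with integrity-basis separation. Both delicate points you flag do hold: (i) since the only elements of order four in the transverse isotropy group of $\bt$ are the rotations $\rot(\nn,\pm\pi/2)$, the inclusion $G_{\bE}\subseteq G_{\bt}\cap G_{\bH}$ forces $\langle\nn\rangle$ to be a four-fold axis of $\bH$; (ii) any tetragonal (resp.\ cubic) harmonic tensor with four-fold axis $\ee_{3}$ is an azimuthal rotation of the normal form~\eqref{eq:tetragonal_case} (resp.~\eqref{eq:cubic_case}), with $\delta$ determined and $\sigma$ determined up to the sign flip realized by $\rot(\ee_{3},\pi/4)$ --- note this is the rotation by $\pi/4$, an eighth of a turn, not a ``quarter-turn'' --- while in the cubic case the cube frame can be azimuthally aligned because $\octa$ acts transitively on the face directions $\pm\ee_{i}$. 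One step worth making explicit: since the class of $\bH$ (cubic versus tetragonal) is decided by whether $\sigma^{2}=25\delta^{2}$, equality of $(\delta,\sigma^{2})$ guarantees that $g_{0}\star\bH$ and $\overline{\bH}$ fall into the same case, so your transitivity claim never has to compare tensors of different classes. In terms of trade-offs, the paper's order yields a shorter proof by reusing established machinery, while yours is more self-contained and geometric at the cost of carrying the normal-form and transitivity arguments explicitly; minimality is handled identically in both, by remark~\ref{rem:Minimality_and_Dimension} and $\dim(\strata{\DD_{4}}/\SO(3))=6$.
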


\begin{rem}\label{rem:Etetra}
  In this set, $\kappa_{6}=I_{2}=\tr \bd_{2}$ can be replaced by $I_{3}=\tr \bd_{3}$ (by lemma~\ref{lem:Couple_tetra_tri}).
\end{rem}

\begin{proof}
  Let $\bE$ and $\overline{\bE}$ be two tetragonal elasticity tensors. Then, the pairs $(\bH,\bt)$ and $(\overline{\bH},\overline{\bt})$ are necessarily both tetragonal, since they have the same respective symmetry as $(\bd^{\prime}, \bv^{\prime}, \bH)$ and $(\overline{\bd^{\prime}}, \overline{\bv^{\prime}}, \overline{\bH})$. If they have the same invariants $\kappa_{1},\dotsc,\kappa_{6}$, then,
  \begin{equation*}
    \kappa_{5} = \bt:\bH:\bt=\overline{\bt}:\overline{\bH}:\overline{\bt},\quad \text{and} \quad \kappa_{6} = I_{2}(\bH)=I_{2}(\overline{\bH}),
  \end{equation*}
  and thus, by lemma~\ref{lem:Couple_tetra_tri}, $I_{k}(\bH)=I_{k}(\overline{\bH})$ for $2 \le k \le 10$. Hence, there exists $g\in \SO(3)$ such that $g\star \bH=\overline{\bH}$. Now, two cases can happen.
  \begin{enumerate}
    \item $\bH$ is tetragonal and has thus the same symmetry group as the pair $(\bH,\bt)$ (the same holds for $\overline{\bH}$ and $(\overline{\bH},\overline{\bt})$). In that case, let $\langle \nn \rangle$ be the principal axis of symmetry group of $\bH$, and $\langle \overline{\nn} \rangle$, the one for $\overline{\bH}$. Then, $g\nn = \pm \overline{\nn}$ and thus $g\star \bt=\overline{\bt}$.
    \item $\bH$ is cubic. Then, the principal axis $\langle \nn \rangle$ of the tetragonal pair $(\bH,\bt)$ is necessarily one of the three principal axes of the cubic tensor $\bH$ (and similarly for the pair $(\overline{\bH},\overline{\bt})$). Since $g$ sends each principal axis of $\bH$ onto a principal axis of $\overline{\bH}$, it is possible to change $g$ such that $g\nn = \overline{\nn}$, and thus that $g \star \bt = \overline{\bt}$ (keeping $g\star \bH=\overline{\bH}$), by replacing $g$ by $gh$, where $h$ belongs to the symmetry group of $\bH$ (see~\cite[Lemma 8.9]{OKDD2018} for details).
  \end{enumerate}
  In both cases, we conclude as in the proof of theorem~\ref{thm:EIsoT}, and the minimality follows since $\dim(\Sigma_{[\DD_{4}]}/\SO(3)) = 6$.
\end{proof}

% ----------------------------------------------------------------
\subsection{Elasticity trigonal stratum}
\label{sec:Ela-trig}
% ----------------------------------------------------------------

Given a trigonal elasticity tensor
\begin{equation*}
  \bE=(\tr \bd, \tr \bv, \bd^{\prime}, \bv^{\prime}, \bH)\in \strata{\DD_{3}},
\end{equation*}
the triplet $(\bd^{\prime}, \bv^{\prime}, \bd_{2}^{\prime})$ is transversely isotropic (by corollary \ref{cor:transversely-isotropic}) and $\bH \in \HH^{4}(\RR^{3})$ is either cubic or trigonal (it is neither isotropic, nor transversely isotropic~\cite{FV1996,OKDD2021}).
The proof of the following result is obtained in the same way as in the tetragonal case.

\begin{thm}\label{thm:Etrigo}
  Let $\bE=(\tr \bd, \tr \bv, \bd^{\prime}, \bv^{\prime}, \bH)$ be a trigonal elasticity tensor and
  \begin{equation*}
    \bt= 2 \frac{(\norm{\bd^{\prime}}^{2}\, {\bd^{\prime}}^{2}+ \norm{\bv^{\prime}}^{2}\,{\bv^{\prime}}^{2}+{\bd_{2}^{\prime}}^{2})'}{\norm{\bd^{\prime}}^{4} +\norm{\bv^{\prime}}^{4}+\norm{\bd_{2}^{\prime}}^{2}}.
  \end{equation*}
  A minimal functional basis for $\strata{\DD_{3}}$ consists of the six rational invariants
  \begin{align*}
     & \kappa_{1}:=\tr \bd,
     &                       & \kappa_{2}:=\tr \bv,
     &                       & \kappa_{3}:=\bd:\bt,
    \\
     & \kappa_{4}:= \bv:\bt,
     &                       & \kappa_{5}:=\bt:\bH:\bt,
     &                       &
    \kappa_{6}=I_{2}.
  \end{align*}
\end{thm}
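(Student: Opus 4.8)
The plan is to mirror the proof of Theorem~\ref{thm:Etetra} almost verbatim, since the only structural difference between the tetragonal and trigonal cases is the internal symmetry of the harmonic part $\bH$. First I would take two trigonal elasticity tensors $\bE$ and $\overline{\bE}$ sharing the same values of $\kappa_{1},\dotsc,\kappa_{6}$, and form the associated transversely isotropic covariants $\bt=\bt(\bE)$ and $\overline{\bt}=\overline{\bt}(\bE)$ from~\eqref{eq:t}. By~\autoref{cor:transversely-isotropic} the triplet $(\bd^{\prime},\bv^{\prime},\bd_{2}^{\prime})$ is strictly transversely isotropic, so $\bt$ is well-defined and equals $(\nn \otimes \nn)'$ for a unit vector $\nn$ spanning the axis of transverse isotropy; in particular the pair $(\bH,\bt)$ inherits the trigonal symmetry of $(\bd^{\prime},\bv^{\prime},\bH)$.

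Next I would recover the full integrity basis of $\bH$ from the two invariants $\kappa_{5}=\bt:\bH:\bt$ and $\kappa_{6}=I_{2}$. This is exactly the role played by \autoref{lem:Couple_tetra_tri} (invoked in~\autoref{rem:Etetra} and in the proof of~\autoref{thm:Etetra}): from $\kappa_{5}$ and $I_{2}$ one reconstructs all $I_{k}(\bH)$ for $2\le k\le 10$, hence $\bH$ and $\overline{\bH}$ are $\SO(3)$-related, say $g\star \bH=\overline{\bH}$. The role of $\kappa_{5}$ here is to pin down, via the contraction along the transverse-isotropy axis, the relative orientation of $\bt$ with respect to $\bH$; since in the trigonal normal form~\eqref{eq:trigonal_case} the parameters $\delta$ and $\sigma$ are controlled by $I_{2}$ and the axial contraction, the pair $(\kappa_{5},\kappa_{6})$ separates the relevant orbit data for the pair $(\bH,\bt)$.

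Then I would adjust the rotation $g$ so that it simultaneously carries $\bt$ to $\overline{\bt}$. As in~\autoref{thm:Etetra}, there are two cases: if $\bH$ is genuinely trigonal, its symmetry group forces $g\nn=\pm\overline{\nn}$ and hence $g\star\bt=\overline{\bt}$ since $\bt$ is even in $\nn$; if $\bH$ degenerates to cubic, the axis $\langle\nn\rangle$ must be one of the principal cubic axes, and composing $g$ with a suitable element $h$ of the cubic symmetry group realigns the axes without disturbing $g\star\bH=\overline{\bH}$ (this is the argument of~\cite[Lemma 8.9]{OKDD2018}). Once $g\star\bt=\overline{\bt}$, the formulas~\eqref{eq:dvd2t} and~\autoref{prop:devaIT} give $g\star\bd=\overline{\bd}$ and $g\star\bv=\overline{\bv}$ from $\kappa_{1},\dotsc,\kappa_{4}$, exactly as in the proof of~\autoref{thm:EIsoT}. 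This shows $\set{\kappa_{1},\dotsc,\kappa_{6}}$ is a separating set, hence a functional basis by~\autoref{thm:sepimpliesfunc}.

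The main obstacle I anticipate is the cubic degeneration of $\bH$: one must verify that a single rotation $g$ can be chosen to realize both $g\star\bH=\overline{\bH}$ \emph{and} $g\star\bt=\overline{\bt}$, which requires that the axis selected by the transverse covariant $\bt$ lie among the principal axes of the cubic $\bH$ and that the cubic symmetry group acts transitively enough on these axes to absorb any mismatch. This is precisely where~\cite[Lemma 8.9]{OKDD2018} is needed, and it is the only genuinely non-routine point. Finally, minimality is immediate from~\autoref{rem:Minimality_and_Dimension}, since $\dim(\Sigma_{[\DD_{3}]}/\SO(3))=6$ by~\eqref{eq:dimElastrata} and we have exactly six invariants.
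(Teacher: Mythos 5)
Your proposal is correct and follows essentially the same route as the paper, which proves this theorem simply by declaring it is ``obtained in the same way as in the tetragonal case'', \emph{i.e.}, by the proof of theorem~\ref{thm:Etetra}: use lemma~\ref{lem:Couple_tetra_tri} to recover all $I_{k}(\bH)$ from $\kappa_{5},\kappa_{6}$, split into the trigonal/cubic cases for $\bH$ (invoking \cite[Lemma 8.9]{OKDD2018} in the cubic degeneration), realign $\bt$ via the principal axis, reconstruct $\bd,\bv$ from \eqref{eq:dvd2t} and proposition~\ref{prop:devaIT}, and conclude minimality from remark~\ref{rem:Minimality_and_Dimension} with $\dim(\Sigma_{[\DD_{3}]}/\SO(3))=6$. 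All the steps you identify, including the cubic-degeneration subtlety, are exactly those of the paper's argument.
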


\begin{rem}\label{rem:Etrigo}
  In this set, the invariant $\kappa_{6}=I_{2}=\tr \bd_{2}$ can be changed into $I_{3}=\tr \bd_{3}$ (by lemma~\ref{lem:Couple_tetra_tri}).
\end{rem}

% ----------------------------------------------------------------
\subsection{The special case of fourth-order harmonic tensors}
% ----------------------------------------------------------------

The theorems provided in~\autoref{sec:Ela} apply, of course, to fourth-order harmonic tensors $\bH\in \HH^{4}(\RR^{3})\subset \Ela$ (as the special case $\bd=\bv=0$). In the cubic case, the functional basis defined by the single invariant ${I_{3}}/{I_{2}}$ in proposition~\ref{prop:SepHcubic} is trivially recovered. In the transversely isotropic, tetragonal and trigonal cases, the functional bases provided in propositions~\ref{prop:SepHisoT},~\ref{prop:SepHtetra} and~\ref{prop:SepHtrigo} are recovered as special cases of theorems~\ref{thm:EIsoT},~\ref{thm:Etetra} and~\ref{thm:Etrigo}, thanks to the equalities $\bH:\bd_{2}=2 \bd_{2}^{\prime}$~\cite[Remark 5.2]{OKDD2018}, $\bd_{2}^{\prime}=\pm \sqrt{\frac{3}{2}} \norm{\bd_{2}^{\prime}} \bt\ne 0$ (see section \ref{subsec:ti-Ela-cov}) and the definitions \eqref{eq:Ikinvariants}, so that
\begin{equation}\label{eq:tHt-isoT-tetra-trigo}
  \bt:\bH:\bt =\frac{2}{3}\frac{\bd_{2}^{\prime}:\bH:\bd_{2}^{\prime}}{\norm{\bd_{2}^{\prime}}^{2}}=\frac{4}{3}\frac{\bd_{2}^{\prime}:\bd_{3}'}{I_{4}}=\frac{4I_{5}}{3I_{4}}.
\end{equation}
The equality $\bt:\bH:\bt =4 I_{5}/3I_{4}$ is valid for any transversely isotropic, tetragonal or trigonal pair $(\bH, \bt)$ with $\norm{\bt'}=\sqrt{{2}/{3}}$ .

% ----------------------------------------------------------------
\section{A polynomial functional basis for elasticity tensors at least tetragonal or trigonal}
\label{sec:AllFivesClasses}
% ----------------------------------------------------------------

By remark \ref{rem:non-sep-closed-stratum}, each functional basis obtained in the previous section is \emph{a priori} valid for one and only one elasticity symmetry stratum, among the cubic, the transversely isotropic, the tetragonal and the trigonal ones\footnote{The two invariants $\tr \bd, \tr \bv$ constitute a minimal functional basis for  the isotropic stratum $\strata{\SO(3)}$.}.
Consider now the union of strata
\begin{equation*}
  \SX:=\strata{\SO(3)} \cup \strata{\octa} \cup \strata{\OO(2)} \cup \strata{\DD_{3}} \cup \strata{\DD_{4}} \subset \Ela.
\end{equation*}
Of course, for each of these strata, the set of numerators and denominators of the rational invariants involved in their respective rational separating set obtained, constitutes a separating set of polynomial invariants for each of them. But the union of these sets is not separating for $\SX$ (see remark \ref{rem:non-sep-closed-stratum}). The question is thus whether one can merge and complete these separating sets in order to build a polynomial separating set, and hence a polynomial functional basis, valid for any elasticity tensors $\bE$ at least tetragonal or trigonal, \emph{i.e.}, for $\SX$. A positive response is provided by the following result (see~\autoref{sec:proof-main-theorem} for a proof).

\begin{thm}\label{thm:finalE}
  Let $\bE=(\tr \bd, \tr \bv, \bd^{\prime}, \bv^{\prime}, \bH)$ be an elasticity tensor and
  \begin{align*}
     & K_{1}:=\tr \bd,
     &                     & L_{1}:=\tr \bv,
     &                     & I_{3}:= \tr \bd_{3},
     &                     & K_{4}:=\norm{\bd^{\prime}}^{4} + \norm{\bv^{\prime}}^{4} + \norm{\bd_{2}^{\prime}}^{2},
    \\
     & K_{5}:=\bd:\bk_{4},
     &                     & L_{5}:= \bv:\bk_{4},
     &                     & K_{9}:=\bk_{4}:\bH:\bk_{4},
     &                     & K_{10}:=\norm{\tr(\bH\times \bk_{4})}^{2}.
  \end{align*}
  where $\bk_{4}:= (\norm{\bd^{\prime}}^{2}\, {\bd^{\prime}}^{2}+ \norm{\bv^{\prime}}^{2}\,{\bv^{\prime}}^{2}+{\bd_{2}^{\prime}}^{2})'$.
  \begin{enumerate}
    \item A minimal functional basis for $\strata{\SO(3)} \cup \strata{\octa} \cup \strata{\OO(2)} \cup \strata{\DD_{4}}$ (\emph{i.e.}, at least tetragonal elasticity tensors) consists of the seven polynomial invariants
          $K_1, L_1, I_3, K_4, K_{5}, L_{5}$ and $K_{9}$.

    \item A minimal functional basis for $\strata{\SO(3)} \cup \strata{\octa} \cup \strata{\OO(2)} \cup \strata{\DD_{3}}$ (\emph{i.e.}, at least trigonal elasticity tensors) consists of the seven polynomial invariants
          $K_1, L_1, I_3, K_4, K_{5}, L_{5}$ and $K_{9}$.

    \item A minimal functional basis for $\SX$ (\emph{i.e.}, at least tetragonal or trigonal elasticity tensors) consists of the eight polynomial invariants $K_1, L_1, I_3, K_4, K_{5}, L_{5}, K_{9}$ and $K_{10}$.
  \end{enumerate}
\end{thm}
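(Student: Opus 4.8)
The plan is to prove that each listed set is a \emph{separating set} for its union of strata; by the Wineman--Pipkin theorem~\ref{thm:sepimpliesfunc} it is then a functional basis, and minimality will follow from a dimension count together with an irredundancy check. The organizing identity is that, on the open set $K_{4}\neq 0$, the tensor $\bk_{4}$ is proportional to the covariant $\bt$ of~\eqref{eq:t}, namely $\bk_{4}=\tfrac{1}{2}K_{4}\,\bt$. Consequently every rational invariant used in the per-stratum theorems is recovered polynomially,
\begin{equation*}
  \bd:\bt=\frac{2K_{5}}{K_{4}},\quad \bv:\bt=\frac{2L_{5}}{K_{4}},\quad \bt:\bH:\bt=\frac{4K_{9}}{K_{4}^{2}},\quad \norm{\tr(\bH\times \bt)}^{2}=\frac{4K_{10}}{K_{4}^{2}},
\end{equation*}
and, using $\bd'=\tfrac32(\bd:\bt)\bt$, $\bv'=\tfrac32(\bv:\bt)\bt$ from~\eqref{eq:dvd2t}, also
\begin{equation*}
  \norm{\bd_{2}'}^{2}=K_{4}-\norm{\bd'}^{4}-\norm{\bv'}^{4}=\frac{K_{4}^{5}-36\,(K_{5}^{4}+L_{5}^{4})}{K_{4}^{4}}.
\end{equation*}

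First I would settle the locus $K_{4}=0$. By theorem~\ref{thm:main} and corollary~\ref{cor:transversely-isotropic} this is exactly $\strata{\SO(3)}\cup\strata{\octa}$ (there $\bd'=\bv'=\bd_{2}'=0$), so $K_{4}$ already detaches these two classes from the rest. On it, $\bd=\tfrac13 K_{1}\bq$ and $\bv=\tfrac13 L_{1}\bq$ are fixed by $K_{1},L_{1}$, while $\bH$ is isotropic or cubic; since $I_{3}=\tr\bd_{3}=1920\,\delta^{3}$ on cubic harmonic tensors and vanishes on $\bH=0$, the invariant $I_{3}$ simultaneously distinguishes the two classes and (the real cube root being injective) fixes the orbit of $\bH$, reproducing theorem~\ref{thm:ECubic}. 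On the complementary locus $K_{4}\neq 0$ the covariant $\bt=(\nn\otimes\nn)'$ is well defined, and I would reconstruct the orbit exactly as in the proofs of theorems~\ref{thm:EIsoT},~\ref{thm:Etetra},~\ref{thm:Etrigo}: the orbit of $\bH$ is pinned from its invariants via lemma~\ref{lem:Couple_tetra_tri}, $\bH$ is then rebuilt from $\bt$ through the harmonic-square reconstruction~\eqref{eq:ReconstIT}, and finally $\bd,\bv$ are recovered from $K_{1},L_{1},\bd:\bt,\bv:\bt$ after aligning $\bt$ with $\overline{\bt}$ by a rotation. In part~(3) the invariant $K_{10}$ is what separates the trigonal case ($\tr(\bH\times\bt)\neq 0$) from the tetragonal one ($\tr(\bH\times\bt)=0$); in parts~(1),(2) no such confusion can occur, so $K_{10}$ is not needed.

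The delicate point --- and the main obstacle --- is the cross-stratum separation: one must show that two tensors in \emph{different} symmetry classes can never share all invariant values. The zero/nonzero patterns of $K_{4}$ and $K_{10}$ isolate $\{\SO(3),\octa\}$ and the trigonal class, but they do not by themselves separate the transversely isotropic class from the tetragonal one, precisely because of the degenerate configurations where the harmonic parameter $\delta$ (hence $\bt:\bH:\bt$ and $K_{9}$) vanishes together with $I_{3}$ --- for instance a transversely isotropic $\bE$ with $\bH=0$ versus a tetragonal $\bE$ whose $\bH$ has $\delta=0$. Here the recovered quantity $\norm{\bd_{2}'}^{2}=(K_{4}^{5}-36(K_{5}^{4}+L_{5}^{4}))/K_{4}^{4}$ is decisive: it vanishes exactly when $\bd_{2}'=0$, and the pair $(\bt:\bH:\bt,\norm{\bd_{2}'}^{2})$ determines the parameter $\sigma^{2}$ of $\bH$ on the locus $\delta=0$ where $I_{3}$ is blind, hence the symmetry type ($\sigma=0$ transversely isotropic, $\sigma\neq 0$ tetragonal or trigonal). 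I would therefore organize the argument as a case distinction on $(K_{4},K_{10})$ and verify, using the explicit evaluations~\eqref{eq:JkIT},~\eqref{eq:Jktetra},~\eqref{eq:Jktrigo}, that the resulting value-tuples are pairwise disjoint across classes; checking the boundary cases $\delta=0$ and $\bH=0$ is where the computation is least automatic.

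Finally, for minimality I would invoke remark~\ref{rem:Minimality_and_Dimension}: each union contains a stratum whose orbit space has dimension $6$ (the tetragonal, resp. trigonal, stratum, by~\eqref{eq:dimElastrata}), so any separating set of continuous invariants has at least six elements. To upgrade this to irredundancy of the full list, I would exhibit, for each invariant in turn, a pair of tensors in the union whose orbits differ but which agree on all the remaining invariants, so that no proper subset separates; in part~(3) the necessity of the eighth invariant $K_{10}$ is witnessed by a tetragonal--trigonal pair that agrees on $K_{1},L_{1},I_{3},K_{4},K_{5},L_{5},K_{9}$, in the spirit of remark~\ref{rem:non-sep-closed-stratum}.
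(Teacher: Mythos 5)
Your proposal is correct and follows essentially the same route as the paper's appendix proof: the same split on $K_{4}=0$ (isotropic/cubic, settled by $K_{1},L_{1},I_{3}$) versus $K_{4}\neq 0$ (where $\bk_{4}=\tfrac12 K_{4}\bt$ recovers the per-stratum rational invariants, and crucially $\norm{\bd_{2}^{\prime}}^{2}=I_{4}$ from $K_{4},K_{5},L_{5}$, which is exactly what handles the degenerate $\delta=0$ configurations), the same use of $K_{10}$ to split tetragonal from trigonal pairs, and the same minimality argument via the dimension bound plus explicit irredundancy witnesses. The only difference is organizational: the paper packages your cross-stratum case analysis as a separate separating-set result for pairs $(\bH,\bt)$ (theorem~\ref{thm:H-t}, built on lemma~\ref{lem:Couple_tetra_tri}), while you inline the same computation.
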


% ----------------------------------------------------------------
\section{Conclusion}
% ----------------------------------------------------------------

We have summarized the mathematical material that allows to define the notion of \emph{minimal functional basis}, not only on a whole vector space (such as $\Ela$) but also \emph{on its symmetry strata} $\strata{H}$. Restricting the concept of functional basis to the class of \emph{continuous} functions, we have been able to define a lower bound for the cardinality of such a basis for a stratum $\strata{H}$ (namely, $\dim \strata{H}/G$, where $\strata{H}/G$ is the orbit space of the symmetry strata $\strata{H}$), and formulate a method to produce such a minimal functional basis of $\strata{H}$ (theorem~\ref{thm:Key_Thm_Separating_Set}). Using this tool, we have been able to produce low-cardinality  minimal functional bases for the tetragonal, trigonal, transversely isotropic, and cubic strata of $\Ela$, of cardinal at most 6 (whereas a known integrity basis of the full space $\Ela$ contains 297 invariants~\cite{OKA2017}). Finally, theorem~\ref{thm:finalE} provides a minimal polynomial functional basis for the elasticity tensors which are at least tetragonal or trigonal, and consists of eight invariants.

% ----------------------------------------------------------------
\appendix
% ----------------------------------------------------------------

% ----------------------------------------------------------------
\section{Elasticity symmetry groups}
\label{sec:Ela-symmetry-groups}
% ----------------------------------------------------------------

For each of the eight symmetry classes of the elasticity tensor, as detailed in~\cite{FV1996}, we provide an explicit representative subgroup $H\subset \SO(3)$ in this class, which serves as a prototype for visualising each of these symmetries.
\begin{itemize}
  \item $\triv$ is the subgroup of $\SO(3)$ reduced to the identity element;
  \item $\ZZ_{2}$ is generated by the second-order rotation $\rot(\ee_{3},\pi)$. It has order $2$;
  \item $\DD_{2}$ is generated by the second-order rotations $\rot(\ee_{3},\pi)$ and $\rot(\ee_{1},\pi)$. It has order $4$;
  \item $\DD_{3}$ is generated by the third order rotation $\rot(\ee_{3},\frac{2\pi}{3})$ and the second-order rotation $\rot(\ee_{1},\pi)$. It has order $6$;
  \item $\DD_{4}$ is generated by the fourth-order rotation $\rot(\ee_{3},\frac{\pi}{2})$ and the second-order rotation $\rot(\ee_{1},\pi)$. It has order $8$;
  \item $\octa$ is the \emph{octahedral} group, the orientation-preserving symmetry group of the cube with vertices $(\pm 1,\pm 1,\pm 1)$.
        Its principal directions are the normals to its faces, which are the basis vectors $\pm \ee_{i}$. It has order 24;
  \item $\OO(2)$ is the subgroup generated by all rotations $\rot(\ee_{3},\theta)$ ($\theta\in [0;2\pi[$) and the second-order rotation $\rot(\ee_{1},\pi)$. It is of infinite order.
\end{itemize}

All these subgroups are compact. The notation $\rot(\nn,\theta)$ denotes a rotation of angle $\theta$ around axis $\langle \nn \rangle$.

% ----------------------------------------------------------------
\section{Stratification of fourth-order harmonic tensors}
\label{sec:stratification-H4}
% ----------------------------------------------------------------

The vector space $\HH^{4}(\RR^{3})$, of fourth-order harmonic tensors, splits into the following eight symmetry classes (the same as for the elasticity tensor), resulting into the following isotropy stratification of $\HH^{4}(\RR^{3})$ \cite{IG1984}
\begin{equation*}
  \HH^{4}(\RR^{3}) = \strata{\triv} \cup \strata{\ZZ_{2}} \cup \strata{\DD_{2}} \cup \strata{\DD_{3}} \cup \strata{\DD_{4}} \cup \strata{\OO(2)} \cup \strata{\octa} \cup \strata{\SO(3)},
\end{equation*}
namely into triclinic, monoclinic, orthotropic, trigonal, tetragonal, transversely isotropic, cubic and isotropic strata.

Necessary and sufficient covariant conditions characterizing each symmetry stratum of $\HH^{4}(\RR^{3})$ have been derived in~\cite[Theorems 9.3, 9.11, 9.15, and Corollary 9.7]{OKDD2021}. Some of these conditions which are necessary for our purpose are stated below, as theorem~\ref{thm:H-stratification-criteria} (recall that $\bd_{2}$ is transversely isotropic, \emph{iff} $\bd_{2} \times {\bd_{2}}^{2}=0$ and $\bd_{2}^{\prime}\ne 0$).
The conditions for the orthotropic and monoclinic cases require the introduction of additional covariants,
\begin{equation*}
  \bc_{3}:=\bH:\bd_2=2 \bd_3',
  \qquad
  \bc_{4}:=\bH:\bc_3,
  \qquad
  \vv_5:=\lc:[\bd_2, \bc_3],
  \qquad
  \vv_6:=\lc:[\bd_2, \bc_4],
\end{equation*}
of order 2 (the $\bc_k$) and order 1 (the $\vv_k$).

\begin{thm}\label{thm:H-stratification-criteria}
  Let $\bH\in \HH^{4}(\RR^{3})$ be a fourth-order
  harmonic tensor. Then
  \begin{enumerate}
    \item $\bH$ is \emph{isotropic} \emph{iff} $\bH = 0$ (\textit{i.e.} $I_{3} = I_{4} = 0$);
    \item $\bH$ is \emph{cubic} \emph{iff} $\bH \ne 0$ and $\bd_{2}$ is isotropic ($\bd_{2}\ne 0$ and $\bd_{2}^{\prime}=0$);
    \item $\bH$ is \emph{transversely isotropic} \emph{iff} $\bd_{2}$ is transversely isotropic and
          $\bH \times \bd_{2} = 0$;
    \item $\bH$ is \emph{tetragonal} \emph{iff} $\bd_{2}$ is transversely isotropic,
          $ \bH\times \bd_{2} \ne 0$, and $\left.\tr(\bH\times \bd_{2}) = 0\right.$;
    \item $\bH$ is \emph{trigonal} \emph{iff} $\bd_{2}$ is transversely isotropic,
          $\tr(\bH \times \bd_{2}) \ne 0$,
          and
          $\left.(\bH : \bd_{2}) \times \bd_{2} = 0\right.$;
    \item $\bH$ is \emph{orthotropic} \emph{iff} $\vv_{5} = \vv_{6} = 0$ and the pair $(\bd_{2},\bc_{3})$ is orthotropic;
    \item $\bH$ is \emph{monoclinic} \emph{iff} the triplet $(\bd_{2}, \bc_{3}, \bc_{4})$ is monoclinic.
    \item $\bH$ is \emph{triclinic} \emph{iff} the triplet $(\bd_{2}, \bc_{3}, \bc_{4})$ is triclinic.
  \end{enumerate}
\end{thm}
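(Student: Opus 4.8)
The plan is to prove every equivalence by reduction to the explicit normal forms of Section~\ref{sec:H4}. Each condition in the statement is an algebraic relation between the covariants $\bd_{2}=\tr_{13}\bH^{2}$, $\bc_{3}=\bH : \bd_{2}$, $\bc_{4}=\bH : \bc_{3}$, $\vv_{5}=\lc : [\bd_{2},\bc_{3}]$, $\vv_{6}=\lc : [\bd_{2},\bc_{4}]$ and the generalized cross product, hence it is $\SO(3)$-invariant: writing $\bH=g\star\bH_{0}$, the tensor $\bH$ satisfies a given condition if and only if $\bH_{0}$ does. By Forte--Vianello~\cite{FV1996}, every $\bH\in\HH^{4}(\RR^{3})$ is $\SO(3)$-conjugate to exactly one of the eight normal forms --- the Kelvin matrices \eqref{eq:cubic_case}, \eqref{eq:transversly_{i}sotropic_case}, \eqref{eq:tetragonal_case}, \eqref{eq:trigonal_case}, \eqref{eq:ortho_case}, the monoclinic and triclinic forms, and $\bH=0$. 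It therefore suffices to prove (i) that each normal form satisfies the condition attached to its own class, and (ii) that the eight conditions are pairwise incompatible, so that no normal form can satisfy a foreign condition; exhaustiveness is then automatic.

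The first discriminator is the isotropy type of $\bd_{2}$. Since $G_{\bH}\subseteq G_{\bd_{2}}$ and a symmetric second-order tensor is isotropic, transversely isotropic or orthotropic, I would compute $\bd_{2}$ on each normal form: it is isotropic for $\bH=0$ and for \eqref{eq:cubic_case}, transversely isotropic for \eqref{eq:transversly_{i}sotropic_case}, \eqref{eq:tetragonal_case} and \eqref{eq:trigonal_case}, and generically orthotropic for \eqref{eq:ortho_case} and the lower forms. This produces the three main branches. Inside the isotropic branch, $\bH=0$ (equivalently $I_{3}=I_{4}=0$) separates the isotropic from the cubic class, giving items (1) and (2).

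In the transversely isotropic branch I would evaluate, on the $(\delta,\sigma)$-parametrised forms \eqref{eq:transversly_{i}sotropic_case}, \eqref{eq:tetragonal_case}, \eqref{eq:trigonal_case}, the covariants $\bH\times\bd_{2}$, $\tr(\bH\times\bd_{2})$ and $\bc_{3}\times\bd_{2}$, using the component formulas of~\cite{ADDKO2019}: one gets $\bH\times\bd_{2}=0$ exactly in the transversely isotropic case, $\tr(\bH\times\bd_{2})=0$ with $\bH\times\bd_{2}\ne0$ in the tetragonal case, and $\tr(\bH\times\bd_{2})\ne0$ with $\bc_{3}\times\bd_{2}=0$ in the trigonal case, the bifurcation relations $\sigma=0$ and $\sigma^{2}=25\delta^{2}$ (respectively $50\delta^{2}$) recalled in Section~\ref{sec:H4} ensuring that the non-vanishing clauses hold precisely on the open strata. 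In the orthotropic branch, $\vv_{5}$ and $\vv_{6}$ are the axial vectors of the commutators $[\bd_{2},\bc_{3}]$ and $[\bd_{2},\bc_{4}]$, so $\vv_{5}=\vv_{6}=0$ is equivalent to the simultaneous diagonalisability of $\bd_{2},\bc_{3},\bc_{4}$; reading off the joint isotropy type of $(\bd_{2},\bc_{3},\bc_{4})$ on \eqref{eq:ortho_case} and the lower normal forms then yields items (6)--(8).

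The step I expect to be the main obstacle is the sharpness required for (ii), namely excluding accidental coincidences. The inclusion $G_{\bH}\subseteq G_{\bd_{2}}$ only says that $\bd_{2}$ is at least as symmetric as $\bH$, so a priori an orthotropic or monoclinic $\bH$ might possess an accidentally transversely isotropic $\bd_{2}$; it is exactly the role of the auxiliary clauses $\bc_{3}\times\bd_{2}=0$, $\vv_{5}=\vv_{6}=0$ and the non-vanishing conditions to rule these out, and proving that they do so reduces to checking that each non-vanishing covariant indeed vanishes on the whole closure of the next more symmetric stratum and nowhere else. The conceptual heart is the contrast of $\tr(\bH\times\bd_{2})$ between the tetragonal and trigonal forms: this covariant lies in $\Sym^{3}(\RR^{3})=\HH^{3}\oplus\HH^{1}$, whose fourfold-invariant subspace is trivial whereas its threefold-invariant subspace is one-dimensional, which is precisely why its vanishing is the tetragonal-versus-trigonal criterion. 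Carried out, this recovers~\cite[Theorems 9.3, 9.11, 9.15 and Corollary~9.7]{OKDD2021}.
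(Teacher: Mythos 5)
You should first be aware that the paper contains no proof of this theorem: it is imported verbatim from \cite[Theorems 9.3, 9.11, 9.15 and Corollary 9.7]{OKDD2021}, so your attempt has to stand as a self-contained argument. As such, your overall scheme --- covariance of each condition, reduction to normal forms, forward verification on each class's own normal form plus pairwise incompatibility of the eight conditions --- is sound in principle, and it is essentially adequate for items (1)--(5): there the relevant normal forms \eqref{eq:cubic_case}, \eqref{eq:transversly_{i}sotropic_case}, \eqref{eq:tetragonal_case}, \eqref{eq:trigonal_case} are two-parameter families, every verification is a finite computation in $(\delta,\sigma)$, and your representation-theoretic explanation of why $\tr(\bH\times\bd_{2})$ separates $\DD_{4}$ from $\DD_{3}$ (the $\DD_{4}$-invariant subspace of $\Sym^{3}(\RR^{3})=\HH^{3}\oplus\HH^{1}$ is trivial, the $\DD_{3}$-invariant one is a line) is correct.

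The genuine gap is in items (6)--(8). For these, your step (i) --- ``each normal form satisfies the condition attached to its own class'' --- is not the routine evaluation you present it as. It requires proving that the pair $(\bd_{2},\bc_{3})$, respectively the triplet $(\bd_{2},\bc_{3},\bc_{4})$, has \emph{exactly} the symmetry class of $\bH$, with no accidental extra symmetry, uniformly over the orthotropic ($3$-parameter), monoclinic ($5$-parameter) and triclinic ($9$-parameter) families; the monoclinic and triclinic normal forms are not even written in this paper. Your proposed resolution of what you call the main obstacle --- ``checking that each non-vanishing covariant vanishes on the whole closure of the next more symmetric stratum and nowhere else'' --- does not address this difficulty: the issue is not the vanishing locus of a single covariant, but the exactness of the \emph{joint} isotropy group of a tuple of covariants over a positive-dimensional stratum, and establishing that is precisely the substantial content of the cited theorems of \cite{OKDD2021}. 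Two further holes: (a) your incompatibility step (ii) is not free across branches; for instance, excluding that some $\bH$ satisfies both condition (5) and condition (6) needs a lemma to the effect that $\bc_{3}\times\bd_{2}=0$ with $\bd_{2}$ transversely isotropic forces $\bc_{3}$ to be coaxial with $\bd_{2}$ (so that the pair cannot be orthotropic); no such cross-product lemma is stated or proved in your proposal. (b) The claim that $\vv_{5}=\vv_{6}=0$ is equivalent to simultaneous diagonalisability of $\bd_{2},\bc_{3},\bc_{4}$ is only valid when $\bd_{2}$ has simple spectrum: commuting with a degenerate $\bd_{2}$ does not force $\bc_{3}$ and $\bc_{4}$ into a common eigenbasis.
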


\begin{rem}\label{rem:J2K4}
  Polynomial equations involving invariants instead of covariants have been formulated in~\cite{AKP2014}, for some symmetry strata of $\HH^{4}(\RR^{3})$ (those of dimension at most $3$). They consist in a finite set of polynomial relations and inequalities on the $I_{k}$. For instance, we have $\bH=0 \iff \bd_{2}=\bz \iff I_{2}=\norm{\bH}^{2}=0$, $\bd_{2}^{\prime}= 0 \iff I_{4}=\norm{\bd_{2}^{\prime}}^{2}= 0$
  and, by~\eqref{eq:TrHd2_et_Disc}, we get that $\tr (\bH\times \bd_{2})=0 \iff 2I_{2}^{3}-60I_{3}^{2}-9I_{4}I_{2}+18I_{6}=0$.
  The condition $I_{4}=0$ characterizes the symmetry classes which are at least cubic, and we have $I_{4}\ne 0$ for each fourth-order harmonic tensor which is either transversely isotropic, tetragonal or trigonal.
\end{rem}

% ----------------------------------------------------------------
\section{A reconstruction formula}
\label{sec:reconstruction-formula}
% ----------------------------------------------------------------

We propose a reconstruction formula for each transversely isotropic tensor $\bH\in \strata{\OO(2)}$ by means of a transversely isotropic second-order tensor $\bt$. Denoting by $\langle\nn\rangle$ (where $\norm{\nn}=1$), the axis of transverse isotropy, we introduce $\bt:=(\nn\otimes \nn)'$, which belongs to $\HH^{2}(\RR^{3})$. Using the concept of \emph{harmonic square} introduced in~\cite{OKDD2018}, which builds a fourth-order harmonic tensor $\bt \ast \bt$ from a second-order harmonic tensor $\bt$, we get
\begin{equation*}
  \bt \ast \bt:=\bH(\bt \odot \bt)= \bt \odot \bt - \frac{4}{7}\, \bq \odot \bt^{2} + \frac{2}{35} \norm{\bt}^{2}\, \bq \odot \bq\; \in \HH^{4}(\RR^{3}),
\end{equation*}
where $\odot$ is the symmetric tensor product and $\bH(\bS)$, defined by ~\eqref{eq:H4E}, is the projection of a totally symmetric fourth-order tensor $\bS$ onto its fourth-order harmonic component $\bH$. It is such that
\begin{equation}\label{eq:normett}
  \norm{\bt \ast \bt}^{2}=\bt:(\bt \ast \bt): \bt=\frac{8}{35},\qquad	(\bt \ast \bt)\3dots  (\bt \ast \bt) =\frac{8}{105}\,\bq +\frac{12}{147}\,\bt.
\end{equation}
We have then a reconstruction formula for $\bH$, using the scalar $\bt:\bH:\bt$ and the deviatoric transversely isotropic second-order tensor $\bt$:

\begin{thm}\label{thm:ReconstIT}
  Each fourth-order harmonic tensor $\bH\in \strata{\OO(2)}$, transversely isotropic of axis $\langle \nn \rangle$, with $\norm{\nn}=1$, can be written
  \begin{equation}\label{eq:ReconstIT}
    \bH =\frac{35}{8} (\bt:\bH:\bt)\,\bt \ast \bt , \qquad
    \bt:\bH:\bt =\frac{28}{9}\frac{I_{3}}{I_{2}}=\frac{4}{3}\frac{I_{5}}{I_{4}},
  \end{equation}
  with $\bt=(\nn\otimes\nn)'$, and
  \begin{equation}\label{eq:ReconstITd2}
    \bd_{2}(\bH)= \frac{5}{48}  (\bt:\bH:\bt)^{2}\left(14\, \bq+15\, \bt\right).
  \end{equation}
\end{thm}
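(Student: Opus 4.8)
The plan is to exploit the fact that, for a \emph{fixed} axis $\langle\nn\rangle$, the harmonic fourth-order tensors that are at least transversely isotropic about that axis form a one-dimensional linear subspace of $\HH^{4}(\RR^{3})$. This can be read off directly from the Kelvin normal form \eqref{eq:transversly_{i}sotropic_case}: once $\nn$ is aligned with $\ee_{3}$, such a tensor is described by the single parameter $\delta$. (Equivalently, the fixed-point space $\Fix(\OO(2))$ inside the irreducible $\SO(3)$-module $\HH^{4}(\RR^{3})$ is the line spanned by the degree-$4$ zonal harmonic.) Since $\bt=(\nn\otimes\nn)'$ is transversely isotropic about $\langle\nn\rangle$ and the harmonic square is $\SO(3)$-equivariant, $\bt\ast\bt$ is a \emph{nonzero} element of this line (it is nonzero because $\norm{\bt\ast\bt}^{2}=\tfrac{8}{35}\neq 0$ by \eqref{eq:normett}). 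Hence any transversely isotropic $\bH$ sharing the axis $\langle\nn\rangle$ satisfies $\bH=c\,\bt\ast\bt$ for a unique scalar $c$.

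To pin down $c$, I would contract this identity twice with $\bt$. The first relation of \eqref{eq:normett}, $\bt:(\bt\ast\bt):\bt=\tfrac{8}{35}$, then gives $\bt:\bH:\bt=\tfrac{8}{35}\,c$, whence $c=\tfrac{35}{8}(\bt:\bH:\bt)$. This is exactly the reconstruction formula $\bH=\tfrac{35}{8}(\bt:\bH:\bt)\,\bt\ast\bt$.

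Next I would express the scalar $\bt:\bH:\bt$ through the integrity-basis invariants. Aligning $\nn$ with $\ee_{3}$ so that $\bH$ takes the normal form \eqref{eq:transversly_{i}sotropic_case}, and using that $\bH$ is traceless (so that every contraction of $\bH$ with $\bq$ vanishes), one gets $\bt:\bH:\bt=(\nn\otimes\nn):\bH:(\nn\otimes\nn)=H_{3333}=8\delta$. Comparing with the evaluations \eqref{eq:JkIT} gives $\tfrac{28}{9}\,I_{3}/I_{2}=\tfrac{4}{3}\,I_{5}/I_{4}=8\delta$; since all three quantities are rational $\SO(3)$-invariants, the identities extend from the normal form to the whole stratum $\strata{\OO(2)}$.

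Finally, for $\bd_{2}(\bH)$ I would substitute the reconstruction formula into $\bd_{2}=\bH\3dots\bH$ (see \eqref{eq:d2}), obtaining $\bd_{2}(\bH)=\big(\tfrac{35}{8}\big)^{2}(\bt:\bH:\bt)^{2}\,(\bt\ast\bt)\3dots(\bt\ast\bt)$, and then invoke the second relation of \eqref{eq:normett}, $(\bt\ast\bt)\3dots(\bt\ast\bt)=\tfrac{8}{105}\bq+\tfrac{12}{147}\bt$; the constants collapse to $\tfrac{5}{48}(\bt:\bH:\bt)^{2}(14\,\bq+15\,\bt)$, as claimed. The conceptual crux is the one-dimensionality in the first step (together with checking $\bt\ast\bt\neq 0$); once that is in hand the remaining work is the bookkeeping of contraction constants, all of which is supplied by \eqref{eq:normett} and the normal-form evaluations \eqref{eq:JkIT}.
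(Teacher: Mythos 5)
Your proof is correct, but it takes a genuinely different route from the paper's. The paper does not re-derive the reconstruction: it imports the formula $\bH = \frac{63}{25 I_{3}}\, \bd_{2}^{\prime}\ast \bd_{2}^{\prime}$ from \cite[Theorem 5.2]{OKDD2018} and then converts its coefficient into $\frac{35}{8}(\bt:\bH:\bt)$ using the normal-form evaluations \eqref{eq:JkIT} together with the identity $\bt:\bH:\bt = \frac{4I_{5}}{3I_{4}}$ of \eqref{eq:tHt-isoT-tetra-trigo}. You instead prove the reconstruction from scratch: the harmonic fourth-order tensors invariant under $\OO(2)$ about a fixed axis form a one-dimensional subspace of $\HH^{4}(\RR^{3})$ (the zonal-harmonic line); $\bt\ast\bt$ lies on that line by equivariance and is nonzero since $\norm{\bt\ast\bt}^{2}=\frac{8}{35}$; hence $\bH = c\,\bt\ast\bt$, and the double contraction $\bt:(\bt\ast\bt):\bt=\frac{8}{35}$ from \eqref{eq:normett} pins down $c=\frac{35}{8}(\bt:\bH:\bt)$. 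One small caution: reading the one-dimensionality ``directly'' off the Kelvin normal form \eqref{eq:transversly_{i}sotropic_case} is slightly informal, since that normal form is an orbit statement ($\bH=g\star\bH_{\OO(2)}$ for \emph{some} $g$), not a fixed-point-space statement; your parenthetical argument via $\Fix(\OO(2))$ (weight-zero space of the irreducible module) is the rigorous justification and should be regarded as the actual proof of that step. The rest of your argument ($\bt:\bH:\bt=H_{3333}=8\delta=\frac{28}{9}\frac{I_{3}}{I_{2}}=\frac{4}{3}\frac{I_{5}}{I_{4}}$ on the normal form, extended by invariance, and the computation of $\bd_{2}$) matches the paper's, and your constants do collapse correctly: $\left(\tfrac{35}{8}\right)^{2}\tfrac{8}{105}=\tfrac{35}{24}=\tfrac{5}{48}\cdot 14$ and $\left(\tfrac{35}{8}\right)^{2}\tfrac{12}{147}=\tfrac{25}{16}=\tfrac{5}{48}\cdot 15$. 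What your approach buys is self-containedness and generality: it avoids the external covariant-based reconstruction of \cite{OKDD2018} and works verbatim for any $\HH^{n}(\RR^{3})$. What the paper's approach buys is brevity and coherence with the covariant machinery ($\bd_{2}^{\prime}\ast\bd_{2}^{\prime}$) that runs through \cite{OKDD2018,OKDD2021}.
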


\begin{proof}
  It has been shown in~\cite[Theorem 5.2]{OKDD2018} that every transversely isotropic fourth-order harmonic tensor $\bH\in \HH^{4}(\RR^{3})$ can be reconstructed as
  \begin{equation}\label{eq:HisoT_d2}
    \bH = \frac{63}{25 \, I_{3}}\, \bd_{2}^{\prime}\ast \bd_{2}^{\prime}, \qquad I_{3}=\tr \bd_{3}.
  \end{equation}
  The evaluations~\eqref{eq:JkIT} of the invariants $I_{k}$ for all $\bH\in\strata{\OO(2)}$ give
  ${63}/{25I_{3}}={35I_{5}}/{9I_{4}^{2}}$ and $28I_{3}/9I_{2}=4I_{5}/3I_{4}$. We have thus
  \begin{equation*}
    \bH=\frac{63}{25 \, I_{3}}\, \bd_{2}^{\prime}\ast \bd_{2}^{\prime}=
    \frac{35I_{5}}{9I_{4}^{2}}\left(\bd_{2}^{\prime}\ast \bd_{2}^{\prime}\right)=
    \frac{35}{8} (\bt:\bH:\bt)\,\bt \ast \bt,
  \end{equation*}
  where the last equality results from $\bt:\bH:\bt ={4I_{5}}/{3I_{4}}$ (see~\eqref{eq:tHt-isoT-tetra-trigo}). We get thus~\eqref{eq:ReconstIT}. Finally, since $\bd_{2}(\bH)=\bH \3dots \bH$, we deduce~\eqref{eq:ReconstITd2} from~\eqref{eq:ReconstIT} and~\eqref{eq:normett}.
\end{proof}

% ----------------------------------------------------------------
\section{Separating sets for a pair $(\bH,\bt)$}
\label{sec:factorisation-result}
% ----------------------------------------------------------------

We provide here separating sets for a pair $(\bH, \bt)$, on the union of strata
\begin{equation*}
  \strata{\OO(2)} \cup \strata{\DD_{3}} \cup \strata{\DD_{4}},
\end{equation*}
where $\bH$ is a fourth-order harmonic tensor and $\bt$ is a transversely isotropic deviator.

\begin{lem}\label{lem:Couple_tetra_tri}
  Let $\bH\in \HH^{4}(\RR^{3})$ be a fourth-order harmonic tensor and $\bt=\left(\nn\otimes \nn\right)'$
  with $\norm{\nn}=1$, a deviatoric transversely isotropic tensor. If the pair $(\bH,\bt)$ is at least tetragonal, then all the $I_{k}(\bH)$ defined by~\eqref{eq:Ikinvariants} are polynomial functions of $I_{2}(\bH)$ and $\bt:\bH:\bt$. In particular
  \begin{equation}\label{eq:sig2-tetra}
    I_{3} = \frac{3}{4}(\bt:\bH:\bt)I_{2} - \frac{15}{8}(\bt:\bH:\bt)^{3}, \quad I_{4} = \frac{1}{6}\left(I_{2} - \frac{15}{2}(\bt:\bH:\bt)^{2}\right)^{2}.
  \end{equation}
  The same result holds if the pair $(\bH,\bt)$ is at least trigonal, but with
  \begin{equation}\label{eq:sig2-trigo}
    I_{3} = -\frac{9}{8}(\bt:\bH:\bt)I_{2} + \frac{405}{64}(\bt:\bH:\bt)^{3}, \quad I_{4} = \frac{1}{96}\left(I_{2} -
    \frac{135}{8}
    (\bt:\bH:\bt)^{2}
    \right)^{2}.
  \end{equation}
\end{lem}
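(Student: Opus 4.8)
The plan is to reduce the pair to a normal form and then read off the conclusion directly from the tabulated evaluations \eqref{eq:Jktetra} (resp.\ \eqref{eq:Jktrigo}). By definition, saying that the pair $(\bH,\bt)$ is at least tetragonal means that there is a rotation $g\in\SO(3)$ for which the standard group $\DD_{4}$ (with axis $\ee_{3}$) fixes both $g\star\bt$ and $g\star\bH$. A $\DD_{4}$-invariant symmetric deviator is forced to be transversely isotropic about $\ee_{3}$; since $\bt=(\nn\otimes\nn)'$ with $\norm{\nn}=1$ has its distinguished eigendirection along $\nn$, this gives $g\star\bt=(\ee_{3}\otimes\ee_{3})'$. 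At the same time $g\star\bH$ lies in the $\DD_{4}$-fixed subspace of $\HH^{4}(\RR^{3})$, which is exactly the two-parameter family \eqref{eq:tetragonal_case} (its dimension being $2$, matching the number of free scalars $\delta,\sigma$, and covering the degenerate cubic subcase when $\sigma^{2}=25\delta^{2}$). Because $I_{2}(\bH)$ and $\bt:\bH:\bt$ are $\SO(3)$-invariants of the pair, I may evaluate them in this normalized frame without changing their values.

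The key step is the single contraction $\bt:\bH:\bt=8\delta$. Since $\bt=\mathrm{diag}(-\tfrac13,-\tfrac13,\tfrac23)$ is diagonal, only the entries $H_{iikk}$ in the upper-left block of \eqref{eq:tetragonal_case} enter, the $\sigma$-bearing off-diagonal entries drop out, and the sum collapses to $8\delta$. This is the real content of the lemma: $\bt:\bH:\bt$ depends on $\delta$ alone, so it decouples the two normal-form parameters and, together with $I_{2}$, provides genuine coordinates on the family. (The same diagonal contraction on \eqref{eq:trigonal_case} again gives $\bt:\bH:\bt=8\delta$, consistent with $\bt:\bH:\bt=4I_{5}/(3I_{4})$ in \eqref{eq:tHt-isoT-tetra-trigo} whenever $I_{4}\ne0$; I prefer the direct contraction because it remains valid in the cubic subcase where $I_{4}=0$.)

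With this in hand the inversion is routine. Reading $I_{2}=8(35\delta^{2}+\sigma^{2})$ from \eqref{eq:Jktetra} and combining it with $\delta=\tfrac18(\bt:\bH:\bt)$ expresses both $\delta$ and $\sigma^{2}$ as polynomials in the two invariants $I_{2}$ and $\bt:\bH:\bt$. Each entry of \eqref{eq:Jktetra} is a polynomial in $\delta$ and $\sigma^{2}$ (only even powers of $\sigma$ occur, so no sign of $\sigma$ is needed), hence substituting back shows that every $I_{k}(\bH)$ for $2\le k\le10$ is a polynomial in $I_{2}$ and $\bt:\bH:\bt$. Specializing $k=3$ and $k=4$ yields the explicit closed forms \eqref{eq:sig2-tetra}. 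The trigonal case is carried out identically: normalize via $\bH=\bH_{\DD_{3}}$ from \eqref{eq:trigonal_case}, use $\bt:\bH:\bt=8\delta$ and $I_{2}=8(35\delta^{2}+2\sigma^{2})$ from \eqref{eq:Jktrigo} to recover $\delta,\sigma^{2}$, and substitute to obtain \eqref{eq:sig2-trigo}.

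There is no deep obstacle here; the argument is essentially a bookkeeping over the finite lists \eqref{eq:Jktetra} and \eqref{eq:Jktrigo} once the normal form is secured. The points requiring care are, first, confirming that \eqref{eq:tetragonal_case} (resp.\ \eqref{eq:trigonal_case}) genuinely exhausts the $\DD_{4}$-fixed (resp.\ $\DD_{3}$-fixed) subspace, so that a \emph{single} rotation simultaneously normalizes $\bt$ and $\bH$; second, the cubic subcase, which is automatically covered because the axis $\langle\nn\rangle$ forced by $\bt$ is, by construction, a fourfold (resp.\ threefold) axis of $\bH$; and third, matching the exact rational coefficients in \eqref{eq:sig2-tetra} and \eqref{eq:sig2-trigo}, which is a direct computation from $\delta=\tfrac18(\bt:\bH:\bt)$ and the formula for $\sigma^{2}$.
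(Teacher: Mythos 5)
Your proposal is correct and follows essentially the same route as the paper's proof: normalize the pair so its symmetry group contains the standard $\DD_{4}$ (resp.\ $\DD_{3}$), so that $\bt=(\ee_{3}\otimes\ee_{3})'$ and $\bH$ takes the Kelvin normal form \eqref{eq:tetragonal_case} (resp.\ \eqref{eq:trigonal_case}), compute $\bt:\bH:\bt=8\delta$, and invert $(\delta,\sigma^{2})$ polynomially from $I_{2}$ and $\bt:\bH:\bt$ before substituting into \eqref{eq:Jktetra} (resp.\ \eqref{eq:Jktrigo}). The only cosmetic difference is that you verify $\bt:\bH:\bt=8\delta$ by expanding the diagonal contraction, while the paper reads it off as $H_{3333}$ (using that $\bH$ is traceless, so $\bt:\bH:\bt=(\nn\otimes\nn):\bH:(\nn\otimes\nn)$); your added care about the $\DD_{4}$-fixed subspace and the cubic subcase is sound but not a departure in method.
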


\begin{proof}
  Suppose first that $(\bH,\bt)$ is at least tetragonal. Without loss of generality, we can assume that its symmetry group contains $\DD_{4}$ (defined in~\autoref{sec:Ela-symmetry-groups}), and thus that $\bt=(\ee_{3}\otimes \ee_{3})'$. Using the Kelvin representation~\eqref{eq:tetragonal_case} of $\bH$, we get $\bt:\bH:\bt = H_{3333}=8\delta$
  and \eqref{eq:Jktetra},
  from which we deduce~\eqref{eq:sig2-tetra}. Besides, each invariant $I_{k}(\bH)$ can be expressed as a polynomial function of $\delta$ and $\sigma^{2}$ by~\eqref{eq:Jktetra}, and thus of $\bt:\bH:\bt$ and $I_{2}$, which concludes the proof for the tetragonal case. The proof for the trigonal case is similar, except that the Kelvin representation~\eqref{eq:trigonal_case}
  leads to $\bt:\bH:\bt = H_{3333}=8\delta$
  and \eqref{eq:Jktrigo},
  and thus to~\eqref{eq:sig2-trigo}.
\end{proof}

The following theorem is a corollary of lemma~\ref{lem:Couple_tetra_tri} and of result~\cite[Lemma 8.8]{OKDD2021}, which we recall now.

\begin{lem}\label{lem:tetragonal-pair}
  Let $\bt$ be a transversely isotropic second-order tensor and $\bH \in \HH^{4}(\RR^{3})$. Then, $(\bH,\bt)$ is at least tetragonal if and only if $\tr (\bH \times \bt) = 0$.
\end{lem}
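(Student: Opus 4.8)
The plan is to use the $\SO(3)$-equivariance of the covariant $\bH\mapsto\tr(\bH\times\bt)$ together with a weight (Fourier) decomposition along the axis of $\bt$. Since $\bH\times\bq=0$ and $\bt=\bt'+\tfrac13(\tr\bt)\bq$, one has $\tr(\bH\times\bt)=\tr(\bH\times\bt')$, and $\bt'$ is a nonzero multiple of $(\nn\otimes\nn)'$; after rescaling and applying a rotation I may assume $\bt=(\ee_{3}\otimes\ee_{3})'$, whose symmetry group $G_{\bt}$ is $\OO(2)$ with axis $\langle\ee_{3}\rangle$. The first step is to reformulate the right-hand side as a condition purely on $\bH$: since every $\DD_{4}$ contained in $\OO(2)=G_{\bt}$ has $\langle\ee_{3}\rangle$ as its four-fold axis (the only order-four elements of $\OO(2)$ being $\rot(\ee_{3},\pm\tfrac{\pi}{2})$), the pair $(\bH,\bt)$ is at least tetragonal if and only if $\rot(\ee_{3},\tfrac{\pi}{2})\in G_{\bH}$, equivalently if and only if $\bH$ is supported only on the weights $m\in\{0,\pm4\}$ of the $\SO(2)$-action about $\ee_{3}$. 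The one nontrivial half here is that such an $\bH$ automatically has a two-fold symmetry axis in the plane orthogonal to $\ee_{3}$; this follows by writing its weight $\pm4$ part as $2|c|\,\rho^{4}\cos(4\theta+\phi)$, which is fixed by the flip $\rot(\nn,\pi)$ for $\nn$ at the in-plane angle $-\phi/4$, while the weight-$0$ part is fixed by every such flip.

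For the direct implication I would note that $\bT:=\tr(\bH\times\bt)\in\Sym^{3}(\RR^{3})=\HH^{3}(\RR^{3})\oplus\HH^{1}(\RR^{3})$ is a covariant of the pair, hence is fixed by $G_{(\bH,\bt)}$. If the pair is at least tetragonal, then $\bT$ is fixed by a conjugate of $\DD_{4}$, so it suffices to check $(\Sym^{3}(\RR^{3}))^{\DD_{4}}=0$. This is a one-line weight count: a $\DD_{4}$-invariant symmetric third-order tensor can only carry weights $m\equiv0\pmod4$, i.e. $m=0$ in the available range; but the weight-$0$ subspace (the zonal cubic harmonic in $\HH^{3}(\RR^{3})$ and the axial vector in $\HH^{1}(\RR^{3})$) is odd under the flip $\rot(\ee_{1},\pi)\in\DD_{4}$, so its invariant part vanishes. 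Thus $\bT=0$.

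The converse is the substantial direction. Writing $\HH^{4}(\RR^{3})=\bigoplus_{m=-4}^{4}V_{m}$ for the weight decomposition about $\ee_{3}$, the linear map $L(\bH):=\tr(\bH\times\bt)$ is $G_{\bt}=\OO(2)$-equivariant and therefore preserves weights. As $\Sym^{3}(\RR^{3})$ carries no weight $\pm4$, $L$ kills $V_{\pm4}$; and $L$ kills $V_{0}$ because the image of a (transversely isotropic) weight-$0$ tensor is an $\OO(2)$-invariant element of $\Sym^{3}(\RR^{3})$, a space equal to $0$ by the same flip-parity argument. It then remains to show that $L$ is injective on each of the one-dimensional complex weight spaces $V_{\pm1},V_{\pm2},V_{\pm3}$, all of whose target weight spaces in $\Sym^{3}(\RR^{3})$ are nonzero. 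Granting this, $\ker L=V_{0}\oplus V_{4}\oplus V_{-4}$, which by the first paragraph is exactly the set of $\bH$ making $(\bH,\bt)$ at least tetragonal, and the equivalence follows.

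The main obstacle is precisely this injectivity on the weights $m=1,2,3$: equivariance alone only forces $L$ to be either zero or injective on each such weight, and excluding an accidental vanishing needs an explicit evaluation. Concretely I would take a harmonic generator of each $V_{m}$ — for instance the harmonic projection of $(x+iy)^{m}z^{4-m}$ — and compute $\tr(\bH\times(\ee_{3}\otimes\ee_{3}))$ on it from the component formula for the generalized cross product, verifying it is nonzero. I expect that $L$ vanishes on $V_{m}$ exactly when the weight factor $m(16-m^{2})$ vanishes, i.e. for $m\in\{0,\pm4\}$, and is nonzero otherwise; establishing this weight dependence (equivalently, expressing $L$ through the rotation generator $\ee_{3}\times(\cdot)$ on $\HH^{4}(\RR^{3})$ followed by an order-lowering contraction) is the one genuinely computational ingredient.
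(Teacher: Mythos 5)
First, a point of comparison: the paper does \emph{not} prove this lemma at all; it is recalled verbatim from \cite[Lemma 8.8]{OKDD2021} (see the sentence immediately preceding the statement), so your attempt can only be judged on its own merits, not against an internal argument. Much of it is correct and well structured: the normalization $\bt=(\ee_{3}\otimes\ee_{3})'$ (licit because $\bH\times\bq=0$ and scaling changes nothing), the observation that any conjugate of $\DD_{4}$ contained in $G_{\bt}=\OO(2)$ must have $\langle\ee_{3}\rangle$ as its four-fold axis, the argument that invariance of $\bH$ under $\rot(\ee_{3},\tfrac{\pi}{2})$ automatically produces an in-plane two-fold axis for the pair (so that ``$(\bH,\bt)$ at least tetragonal'' is equivalent to ``$\bH$ supported on weights $\{0,\pm4\}$''), and the forward implication via $(\Sym^{3}(\RR^{3}))^{\DD_{4}}=0$ are all complete and correct.

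The converse, however, contains a genuine gap, and it sits exactly where the content of the lemma lies. Equivariance of $L(\bH):=\tr(\bH\times\bt)$ only gives $\ker L\supseteq V_{0}\oplus V_{4}\oplus V_{-4}$, plus the dichotomy that $L$ is either zero or injective on each of $V_{\pm1},V_{\pm2},V_{\pm3}$; you never exclude the ``zero'' alternative, writing ``I expect'' and calling it ``the one genuinely computational ingredient'' without performing it. This is not a routine detail one may defer: if $L$ happened to vanish on, say, $V_{\pm2}$, the lemma would simply be false (a nonzero weight-$2$ harmonic tensor would satisfy $\tr(\bH\times\bt)=0$ while the pair $(\bH,\bt)$ is only orthotropic), so as written the backward implication is unproven. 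The good news is that your plan does close, and cleanly: under the harmonic-polynomial correspondence $h(x)=\bH\2dots(x\otimes x)\2dots(x\otimes x)$, the cross product corresponds (up to a nonzero constant) to $\det(x,\nabla\cdot,\nabla\cdot)$ and the trace to the Laplacian, so using $\bH\times\bq=0$ one finds $\tr(\bH\times\bt)\leftrightarrow \mathrm{const}\cdot\partial_{z}\partial_{\theta}h$, where $\partial_{\theta}:=x\partial_{y}-y\partial_{x}$. Since $\partial_{\theta}$ multiplies the weight-$m$ component by $im$, and $\partial_{z}$ annihilates a degree-$4$ weight-$m$ harmonic polynomial exactly when $\abs{m}=4$ (check the generators $(x+iy)^{3}z$, $(x+iy)^{2}(6z^{2}-x^{2}-y^{2})$, $(x+iy)(4z^{3}-3z(x^{2}+y^{2}))$ versus $(x+iy)^{4}$), one gets $\ker L=V_{0}\oplus V_{4}\oplus V_{-4}$ exactly; this supplies the missing step and confirms that your guessed weight factor vanishes precisely at $m\in\{0,\pm4\}$.
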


\begin{thm}\label{thm:H-t}
  Let $\bH\in \HH^{4}(\RR^{3})$ be a fourth-order harmonic tensor and $\bt=\left(\nn\otimes \nn\right)'$
  with $\norm{\nn}=1$, a deviatoric transversely isotropic tensor. Then, the set of invariants
  \begin{equation}\label{eq:sep-H-t}
    I_{3} := \tr \bd_{3}, \quad I_{4} := \tr {\bd_{2}^{\prime}}^{2} = \norm{\bd_{2}^{\prime}}^{2}, \quad  \norm{\tr(\bH\times \bt)}^{2}, \quad \bt:\bH:\bt
  \end{equation}
  is separating for the pair $(\bH,\bt)$ on $\strata{\OO(2)} \cup \strata{\DD_{3}} \cup \strata{\DD_{4}}$.
\end{thm}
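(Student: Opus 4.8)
The plan is to prove that two pairs $(\bH,\bt)$ and $(\overline{\bH},\overline{\bt})$ belonging to $\strata{\OO(2)} \cup \strata{\DD_3} \cup \strata{\DD_4}$ and sharing the four invariant values in~\eqref{eq:sep-H-t} are related by a single rotation, \emph{i.e.}, that there exists $g \in \SO(3)$ with $g \star \bH = \overline{\bH}$ and $g \star \bt = \overline{\bt}$. I would organize the argument in three steps: first isolate the symmetry type of the pair, then match $\bH$ with $\overline{\bH}$ up to rotation, and finally realign $\bt$ with $\overline{\bt}$.

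First I would exploit the invariant $\norm{\tr(\bH \times \bt)}^{2}$ to pin down the symmetry type of each pair. By lemma~\ref{lem:tetragonal-pair}, $\tr(\bH \times \bt) = 0$ holds precisely when $(\bH,\bt)$ is at least tetragonal; on the union of strata under consideration this confines the pair to $\strata{\OO(2)} \cup \strata{\DD_4}$, while $\norm{\tr(\bH \times \bt)}^{2} \ne 0$ forces it into $\strata{\DD_3}$, the trigonal class not being at least tetragonal. Equality of this invariant for the two pairs therefore guarantees that they lie in the same one of these two families.

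Next, within the selected family, I would recover $I_{2}(\bH) = \norm{\bH}^{2}$ from the three remaining invariants $I_{3}$, $I_{4}$ and $\bt:\bH:\bt$ by inverting the relations of lemma~\ref{lem:Couple_tetra_tri}. Setting $x := \bt:\bH:\bt$, the expression for $I_{3}$ in~\eqref{eq:sig2-tetra} (in the at-least-tetragonal case) or~\eqref{eq:sig2-trigo} (in the trigonal case) is affine in $I_{2}$, so for $x \ne 0$ it determines $I_{2}$ uniquely; when $x = 0$ this relation degenerates and merely forces $I_{3} = 0$, whereupon the quadratic relation for $I_{4}$ yields $I_{2}$, the positive root being selected because $I_{2} = \norm{\bH}^{2} > 0$ on these strata. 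With $I_{2}$ known, lemma~\ref{lem:Couple_tetra_tri} writes every $I_{k}(\bH)$, $2 \le k \le 10$, as a function of $I_{2}$ and $x$, so the whole minimal integrity basis~\eqref{eq:Ikinvariants} takes equal values on $\bH$ and $\overline{\bH}$. As an integrity basis is a separating set for $\HH^{4}(\RR^{3})$, this produces a rotation $g$ with $g \star \bH = \overline{\bH}$.

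The last step, upgrading this to an equivalence of pairs, is the one I expect to demand the most care. After applying $g$, the axes of $g \star \bt$ and of $\overline{\bt}$ are both distinguished symmetry axes of $\overline{\bH}$ of the same order, since both $(\overline{\bH}, g \star \bt)$ and $(\overline{\bH}, \overline{\bt})$ are at least tetragonal, respectively trigonal. When $\overline{\bH}$ is transversely isotropic or genuinely tetragonal (resp.\ trigonal), that distinguished axis is unique and $g \star \bt = \overline{\bt}$ follows at once. The subtle situation is when $\overline{\bH}$ is cubic---admissible here, since a cubic $\bH$ generates a tetragonal pair along a $4$-fold axis and a trigonal pair along a $3$-fold axis, and is detected by $I_{4} = \norm{\bd_{2}^{\prime}}^{2} = 0$. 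Then $\overline{\bH}$ has several equivalent principal axes, and I would compose $g$ with an element $h$ of the octahedral symmetry group of $\overline{\bH}$, which acts transitively on its $4$-fold (resp.\ $3$-fold) axes, so as to carry the axis of $g \star \bt$ onto that of $\overline{\bt}$ while fixing $\overline{\bH}$; this is precisely the adjustment of~\cite[Lemma 8.9]{OKDD2018} invoked in the proof of theorem~\ref{thm:Etetra}. The rotation $h g$ then satisfies $h g \star \bH = \overline{\bH}$ and $h g \star \bt = \overline{\bt}$, which completes the argument.
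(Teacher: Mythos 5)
Your proof is correct, and while it opens exactly as the paper's does, its second half follows a genuinely different route. Both arguments begin by using lemma~\ref{lem:tetragonal-pair} and equality of $\norm{\tr(\bH\times \bt)}^{2}$ to place the two pairs simultaneously in the at-least-tetragonal regime or in the trigonal one. The paper then stays entirely with normal forms: it rotates both axes $\nn$, $\overline{\nn}$ onto $\ee_{3}$, writes $[\bH]$ and $[\overline{\bH}]$ as~\eqref{eq:tetragonal_case} (resp.~\eqref{eq:trigonal_case}) with parameters $(\delta,\sigma)$ and $(\overline{\delta},\overline{\sigma})$, and deduces from lemma~\ref{lem:Couple_tetra_tri} that $(\overline{\delta},\overline{\sigma})=(\delta,\pm\sigma)$; the sign ambiguity is absorbed by the rotation $r=\rot(\ee_{3},\frac{\pi}{4})$, which fixes $\bt=(\ee_{3}\otimes\ee_{3})'$ and exchanges the normal forms with $\sigma$ and $-\sigma$, so that $\bH$ and $\bt$ are matched by one and the same rotation from the outset. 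You instead match $\bH$ alone first, via the observation -- left implicit in the paper -- that $I_{2}$ is functionally determined by $I_{3}$, $I_{4}$ and $\bt\2dots\bH\2dots\bt$ on these strata (inverting~\eqref{eq:sig2-tetra}/\eqref{eq:sig2-trigo}), whence the whole integrity basis~\eqref{eq:Ikinvariants} agrees and the abstract separating property of integrity bases yields $g$ with $g\star\bH=\overline{\bH}$; only then do you realign $\bt$ by a symmetry-axis argument, invoking transitivity of the octahedral group on its $4$-fold (resp.\ $3$-fold) axes in the cubic case, as in~\cite[Lemma 8.9]{OKDD2018}. Your route trades the paper's explicit $\pi/4$-rotation bookkeeping for a case analysis on the symmetry class of $\overline{\bH}$; what it buys is the reusable fact that $\{I_{3},I_{4},\bt\2dots\bH\2dots\bt\}$ already pins down all nine $I_{k}$ on each regime, which is the mechanism also exploited in the proof of theorem~\ref{thm:Etetra}.

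Two small points should be patched. First, in the at-least-tetragonal regime $\overline{\bH}$ may also vanish (the pair is then transversely isotropic with $\bH=0$, which does occur on $\strata{\OO(2)}$); your dichotomy ``unique distinguished axis versus cubic'' misses this case, but the fix is immediate: every rotation fixes $\overline{\bH}=0$, so any $h$ carrying the axis of $g\star\bt$ onto that of $\overline{\bt}$ completes the argument. Second, and consistently with this, the root selection when $\bt\2dots\bH\2dots\bt=0$ should be justified by $I_{2}=\norm{\bH}^{2}\geq 0$ rather than $I_{2}>0$: the value $I_{2}=0$ is attained exactly when $\bH=0$, in which case $I_{4}=0$ and the two roots of the quadratic relation coincide, so uniqueness of the recovered $I_{2}$ still holds.
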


\begin{proof}
  Let $(\bH,\bt)$ and $(\overline{\bH},\overline{\bt})$ in $\strata{\OO(2)} \cup \strata{\DD_{3}} \cup \strata{\DD_{4}}$, with $\bt = \left(\nn\otimes \nn\right)'$ and $\overline{\bt} = \left(\overline{\nn}\otimes \overline{\nn}\right)'$, and assume that the four invariants~\eqref{eq:sep-H-t} have the same values on $(\bH,\bt)$ and $(\overline{\bH},\overline{\bt})$. Suppose first that $\tr(\overline{\bH}\times \overline{\bt}) = \tr(\bH\times \bt) = 0$, then, by lemma~\ref{lem:tetragonal-pair}, we conclude that both $(\bH,\bt)$ and $(\overline{\bH},\overline{\bt})$ are at least tetragonal. Then, there exists a rotation $g$ such that $g\star \nn = \ee_{3}$ and $[g \star \bH]$ can be written as~\eqref{eq:tetragonal_case} with parameters $(\delta,\sigma)$. Similarly, there exists a rotation $\overline{g}$ such that $\overline{g}\star \overline{\nn} = \ee_{3}$ and $[\overline{g} \star \overline{\bH}]$ can be written as~\eqref{eq:tetragonal_case} with parameters $(\overline{\delta},\overline{\sigma})$. Now, by lemma~\ref{lem:Couple_tetra_tri}, we deduce that, in any case we have $(\overline{\delta},\overline{\sigma}) = (\delta,\pm\sigma)$ and
  thus that either
  $(\overline{\bH},\overline{\bt}) = (\overline{g}^{-1}g) \star (\bH,\bt)$ or $(\overline{\bH},\overline{\bt})
    = (\overline{g}^{-1}r g) \star (\bH,\bt)$, where $r= \rot(\ee_{3}, \frac{\pi}{4})$ is the rotation of angle $\pi/4$ about  $\ee_{3}$.
  If $\tr(\overline{\bH}\times \overline{\bt}) = \tr(\bH\times \bt) \ne 0$, then, $(\bH,\bt)$ and $(\overline{\bH},\overline{\bt})$ are both at least trigonal, and the arguments are similar.
\end{proof}

% ----------------------------------------------------------------
\section{Proof of theorem~\ref{thm:finalE}}
\label{sec:proof-main-theorem}
% ----------------------------------------------------------------

Observe first that an elasticity tensor $\bE\in\SX$ is at least cubic if and only if
\begin{equation*}
  K_{4} := \norm{\bd^{\prime}}^{4} + \norm{\bv^{\prime}}^{4} + \norm{\bd_{2}^{\prime}}^{2} = 0.
\end{equation*}
If $K_{4} \ne 0$, then, $\bE= (\tr \bd,\tr \bv,\bd^{\prime},\bv^{\prime},\bH)$ belongs thus to $\strata{\DD_{3}} \cup \strata{\DD_{4}} \cup \strata{\OO(2)}$, the transversely isotropic rational covariant $\bt := 2\bk_{4}/K_{4}$ is well defined and by~\eqref{eq:dvd2t}, and we have
\begin{equation}\label{eq:TensOrd2_Fonc_KL}
  \bd^{\prime}=\frac{3K_{5}}{K_{4}}\bt, \qquad \bv^{\prime}=\frac{3L_{5}}{K_{4}}\bt, \qquad \bd_{2}^{\prime}=\pm \sqrt{\frac{3}{2}}\norm{\bd_{2}^{\prime}}\; \bt.
\end{equation}
Moreover, $\bE$ has the same symmetry class as the pair $(\bH,\bt)$ which is either trigonal, tetragonal or transversely isotropic.

By theorems~\ref{thm:ECubic} to \ref{thm:Etrigo}, the set $\mathscr{F} := \set{K_{1},L_{1},\dotsc,K_{10}}$ is separating for each individual stratum $\strata{H}$ contained in $\SX$. Therefore, given two elasticity tensors $\bE,\overline{\bE} \in \SX$ with the same eight invariants $K_{1}= \overline K_{1}, L_{1}=\overline L_{1}, \dots, K_{10}=\overline K_{10}$, to prove that they are in the same orbit it is enough to show that they belong to the same symmetry class. Therefore, let $\overline{\bE} = (\tr \overline{\bd},\tr \overline{\bv},\overline{\bd^{\prime}},\overline{\bv^{\prime}},\overline{\bH})$, and we will argue according to the symmetry class of $\bE$.

\begin{enumerate}[(A)]
  \item If $\bE$ is isotropic, then, all invariants in $\mathscr{F}\setminus\set{K_{1},L_{1}}$ vanish. Hence,
        \begin{equation*}
          K_{4} = \overline{K}_{4} = \norm{\overline{\bd^{\prime}}}^{4} + \norm{\overline{\bv^{\prime}}}^{4} + \norm{\overline{\bd_{2}^{\prime}}}^{2} = 0 \implies
          \overline{\bd^{\prime}} = \overline{\bv^{\prime}} = \overline{\bd_{2}^{\prime}} = \mathbf{0}.
        \end{equation*}
        Therefore, by theorem~\ref{thm:H-stratification-criteria}, $\overline{\bH}$ is at least cubic, and since $I_{3}=\overline{I}_{3}=0$, we conclude by~\eqref{eq:Jkcubic}, that $\overline{\bH}= 0$, and thus that $\overline{\bE}$ is isotropic.
  \item If $\bE$ is cubic, then, all invariants in $\mathscr{F}\setminus\set{K_{1},L_{1},I_{3}}$ vanish but $I_{3}=\overline I_{3}\ne 0$. We conclude, as in case (A), that $\overline{\bE}$ is at least cubic, and indeed cubic, since $\overline I_{3}\ne 0$ and thus $\overline I_{2} = \norm{\overline{\bH}}^{2} \ne 0$.
  \item If $\bE$ is either transversely isotropic, trigonal or tetragonal, then, $\overline{K_{4}} = K_{4} \ne 0$
        and thus $\overline{\bE}$ is either transversely isotropic, trigonal or tetragonal. Hence, $\overline{\bt}$ is well-defined and is written
        \begin{equation*}
          \overline{\bt} := \frac{2}{\overline{K}_{4}}(\norm{\overline{\bd^{\prime}}}^{2}\, {\overline{\bd^{\prime}}}^{2}+ \norm{\overline{\bv^{\prime}}}^{2}\,{\overline{\bv^{\prime}}}^{2}+{\overline{\bd_{2}^{\prime}}}^{2})^{\prime},
        \end{equation*}
        and $\overline{\bE}$ has the same symmetry class as the pair $(\overline{\bH},\overline{\bt})$.
        Now, from~\eqref{eq:TensOrd2_Fonc_KL} and since $\norm{\bt}^2=\norm{\overline{\bt}}^2=2/3$, $K_{5}=\overline{K}_{5}$, $L_{5}=\overline{L}_{5}$,
        \begin{equation*}
          K_{4} = \frac{36}{K_{4}^4}\left(K_{5}^4 + L_{5}^4 \right) + \norm{\bd_{2}^{\prime}}^{2}
          = \overline{K}_{4} = \frac{36}{\overline{K}_{4}^4}\left(\overline{K}_{5}^4 + \overline{L}_{5}^4 \right) + \norm{\overline{\bd_{2}^{\prime}}}^{2},
        \end{equation*}
        we get $\overline{I}_{4}=\norm{\overline{\bd_{2}^{\prime}}}^{2} = \norm{\bd_{2}^{\prime}}^{2}=I_{4}$.
        Therefore, by theorem~\ref{thm:H-t}, $(\bH, \bt)$ and $(\overline{\bH}, \overline{\bt})$
        are in the same orbit, and belong thus to the same symmetry class.
\end{enumerate}
So far, we have proved that the family $\mathscr{F}=\set{K_{1},L_{1},\dotsc,K_{10}}$ is separating for the three cases (1), (2) and (3) of theorem~\ref{thm:finalE}. Note moreover, that $K_{10}$ can be removed from $\mathscr{F}$ when the goal is to separate  "at least tetragonal" (case (1)) or "at least trigonal" (case (2)) elasticity tensors. Indeed,  in the proof of theorem~\ref{thm:H-t}, $K_{10}$ is used only to distinguish whether a pair $(\bH, \bt)$ is tetragonal or trigonal.

Finally, it remains to prove the minimality of the separating set $\mathscr{F}$ for $\SX$, and of $\mathscr{F}\setminus \set{K_{10}}$ for cases (1) and (2). Let $\mathscr{F}'$ be a proper subset of $\mathscr{F}$ or of $\mathscr{F}\setminus \set{K_{10}}$.
\begin{enumerate}[(a)]
  \item If $\mathscr{F}'$ does not contain $K_{1}=\tr \bd$ or $L_{1}=\tr \bv$, then it fails to be a separating set for isotropic elasticity tensors.
  \item If $\mathscr{F}'$ does not contain $I_{3}=\tr \bd_{3}$, then it fails to be a separating set for cubic elasticity tensors.
  \item If $\mathscr{F}'$ does not contain $K_{5}=\bd:\bk_{4}$, then the two transversely isotropic elasticity tensors $\bE_{1}=(0,0, \bd^{\prime}, 0, 0)$ and
        $\bE_{2}=(0,0, -\bd^{\prime}, 0, 0)$ have the same values for $\mathscr{F}'$ but are not on the same orbit. The same conclusion holds for $L_{5}= \bv:\bk_{4}$.
  \item If $\mathscr{F}'$ does not contain $K_{4}=\norm{\bd^{\prime}}^{4} +\norm{\bv^{\prime}}^{4}+I_{4}$, then  it fails to separate an harmonic tetragonal tensor $\bH_{\DD_{4}}$ with $\delta=0$ (see~\eqref{eq:tetragonal_case}) from $\bE=0$,
        since all the invariants in $\mathscr{F}'$ vanish on these tensors.
  \item If $\mathscr{F}'$ does not contain $K_{9}=\bk_{4}:\bH:\bk_{4}$, it fails to separate tetragonal harmonic tensors
        (by remark~\ref{I2I3_nor_I3I4_tetra}) and to separate trigonal harmonic tensors
        (by remark~\ref{I2I3_nor_I3I4_trigo}).
  \item Finally, if $\mathscr{F}'$ does not contain $K_{10}=\norm{\tr(\bH\times \bk_{4})}^{2}$, then it fails to be a separating set for $\SX$ since all the invariants in
        $\mathscr{F}'$ take the same values
        on the harmonic trigonal tensor $\bH_{\DD_{3}}$
        with $\delta=0$ and $\sigma=\sigma_{1}\ne 0$ (see~\eqref{eq:trigonal_case})
        and the harmonic tetragonal tensor $\bH_{\DD_{4}}$ with $\delta=0$ and $\sigma=\sigma_{2}$
        (see~\eqref{eq:tetragonal_case}), when $\sigma_{2}^{2}=2 \sigma_{1}^{2}$.
\end{enumerate}
These arguments show that $\mathscr{F}$ is a minimal separating set for $\SX$, which proves point (3) of theorem~\ref{thm:finalE}. Items (a) to (e) show that $\mathscr{F}\setminus \set{K_{10}}$ is a minimal separating set for either at least tetragonal tensors (point (1)) or
at least trigonal tensors (point (2)), which achieves the proof.

\subsection*{Funding} The authors were partially supported by CNRS Projet 80--Prime GAMM (Géométrie algébrique complexe/réelle et mécanique des matériaux).

% ----------------------------------------------------------------

\end{document}